\theoremstyle{plain}
\newtheorem{thm}{Theorem}[section]
\newtheorem{lem}[thm]{Lemma}
\newtheorem{prop}[thm]{Proposition}
\newtheorem{cor}[thm]{Corollary}
\newtheorem{conj}[thm]{Conjecture}
\theoremstyle{definition}
\newtheorem{defn}[thm]{Definition}
\newtheorem{remark}[thm]{Remark}
\newtheorem{que}[thm]{Question}
\newcommand{\abs}[1]{\vert{}#1\rvert{}}
\newcommand{\Aut}{\mathrm{Aut}}
\newcommand{\Bl}{\mathrm{Bl}}
\newcommand\Brn{\mathcal{B}r_n}
\newcommand\Br{\mathrm{Br}}
\newcommand{\cI}{\mathcal{I}}
\newcommand\cL{\mathcal{L}}
\newcommand\cR{\mathcal{R}}
\newcommand{\C}{\mathbb C}
\newcommand{\defeq}{\mathrel{\vcenter{\baselineskip0.5ex \lineskiplimit0pt \hbox{.}\hbox{.}}} =}
\newcommand{\defword}[1]{\textit{#1}}
\newcommand{\D}{\mathbb{D}}
\newcommand{\eis}{E} 
\newcommand{\ev}{\mathrm{ev}}
\newcommand{\ex}{\mathrm{ex}}
\newcommand{\E}[1]{\mathrm{E}_{#1}} 
\newcommand{\floor}[1]{\lfloor{#1}\rfloor}
\newcommand{\F}{\mathbb F}
\newcommand{\hatj}{\hat\jmath}
\newcommand{\Hm}{\mathrm{H}}
\newcommand{\hto}{\hookrightarrow}
\newcommand{\id}{\mathrm{id}}
\newcommand{\II}{\mathrm{II}}
\newcommand{\I}{^{-1}}
\newcommand{\Jac}{\mathrm{Jac}}
\newcommand{\J}{\mathcal{J}}
\newcommand{\K}{\mathrm{K}}
\newcommand{\Mb}{\overline{\M}}
\newcommand{\Mod}{\mathrm{Mod}}
\newcommand{\MRES}{\mathrm{MRES}}
\newcommand{\MW}{\operatorname{MW}}
\newcommand{\M}{\mathcal{M}}
\newcommand{\NL}{\mathrm{NL}}
\newcommand\no{\mathrm{no}}
\newcommand{\NS}{\operatorname{NS}}
\newcommand{\ol}[1]{\overline{#1}}
\newcommand{\ord}{\mathrm{ord}}
\renewcommand{\O}{\mathcal O}
\newcommand{\pb}{\ar[rd, phantom, "\lrcorner", pos=0]}
\newcommand{\PGL}{\mathrm{PGL}}
\renewcommand\phi{\varphi}
\newcommand{\Pic}{\mathrm{Pic}}
\newcommand{\Proj}{\mathrm{Proj}}
\newcommand{\pr}{\mathrm{pr}}
\renewcommand{\P}{\mathbb P}
\newcommand\qefed{=\mathrel{\vcenter{\baselineskip0.5ex \lineskiplimit0pt \hbox{.}\hbox{.}}}}
\newcommand{\QMod}{\mathrm{QMod}}
\newcommand{\Q}{\mathbb Q}
\newcommand{\reg}{\mathrm{reg}}
\newcommand{\rstr}[2]{{\ensuremath{\left.#1\right|_{#2}}}}
\newcommand{\R}{\mathbb{R}}
\newcommand{\Sch}{\mathbf{Sch}}
\newcommand{\set}[2][]{#1\{{#2}#1\}}
\newcommand{\SL}{\mathrm{SL}}
\newcommand\sm{\mathrm{sm}}
\newcommand{\Spec}{\operatorname{Spec}}
\newcommand{\Stab}{\mathrm{Stab}}
\newcommand{\Sym}{\mathrm{Sym}}
\newcommand{\tildej}{\tilde\jmath}
\newcommand{\toi}{\xrightarrow{\sim}}
\newcommand\toto{\rightrightarrows}
\newcommand{\trait}{\Delta\!\!\!\!\Delta}
\newcommand{\Wei}{\mathrm{Wei}}
\newcommand{\wt}{\widetilde}
\newcommand{\ZFF}{\mathrm{Z\hat F}}
\newcommand{\ZF}{\operatorname{ZF}}
\newcommand{\Z}{\mathbb Z}
\title{Severi curves of rational elliptic surfaces}
\author{
Fran\c{c}ois Greer \thanks{Department of Mathematics, Michigan State University. East Lansing, MI 48824. greerfra@msu.edu} \and
Joseph Helfer \thanks{Simons Center for Geometry and Physics. Stony Brook, NY 11790. joseph.helfer@stonybrook.edu} \and
John Sheridan \thanks{Department of Mathematics, Princeton University. Princeton, NJ 08544. john.sheridan@princeton.edu}
}
\date{}
\begin{document}

\maketitle

\abstract{We study Severi curves parametrizing rational bisections of elliptic fibrations associated to general pencils of plane cubics. Our main results show that these Severi curves are connected and reduced, and we give an upper bound on their geometric genus using quasi-modular forms. We conjecture that these Severi curves are eventually reducible, and we formulate a precise conjecture for their degrees in $\P^2$, featuring a divisor sum formula for collision multiplicities of branch points.}
\tableofcontents

\pagebreak

\section{Introduction}\label{sec:intro}
Let $S$ be a smooth projective algebraic surface, and $L$ a line bundle on $S$. This paper concerns certain examples of (genus 0) Severi varieties inside $|L| = \P H^0(S,L)$, defined as follows.
\begin{defn}
    The \defword{genus 0 Severi variety} $V(L) = V(S,L)$ of $L$ is the Zariski closure in $|L|$ of the locus of irreducible rational curves $C\in |L|$.
\end{defn}
The geometry of these varieties has a long history going back to Fulton, Harris, Severi, and many others. A basic question is whether $V(S,L)$ is irreducible, assuming that $\dim V(S,L)>0$. Harris answered this in the affirmative when $S=\P^2$ \cite{harris-severi}. Testa followed with an affirmative answer when $S$ is a del Pezzo surface \cite{testa}, assuming that $S$ is general in the case of degree 1 del Pezzos. We expect that Testa's theorem is sharp in the sense that it no longer holds for rational elliptic surfaces, which may be viewed as del Pezzo surfaces of degree 0.
\begin{conj}\label{conj:reducibility}
    Let $R$ be a rational elliptic surface. Then there exists a line bundle $L$ on $R$ such that the Severi variety $V(R,L)$ is reducible of dimension 1.
\end{conj}
We undertake here a systematic study of Severi curves on a general rational elliptic surface $R$, and along the way provide a heuristic argument for Conjecture \ref{conj:reducibility}. Recall that $R$ is the blow up of $\P^2$ at the 9 base points of a pencil of cubic curves. The anticanonical linear system on $R$ gives an elliptic fibration
\[\pi:R\simeq \Bl_9 \P^2 \to \P^1\]
with 12 irreducible nodal fibers. If $L$ is a globally generated line bundle on $R$, not pulled back from $\P^1$ via $\pi$, then the complete linear series $|L|$ parametrizes a family of curves with genus
\[g(L) = 1+\frac{1}{2}(L^2+L\cdot K_R).\]
Setting $l = -K_R\cdot L>0$, the Riemann-Roch Formula on $R$ combined with the Kodaira Vanishing Theorem gives:
\begin{equation}\label{eq:rr}
    r(L)=h^0(L)-1 = \frac{1}{2}(L^2+l) = g(L)+l-1.
\end{equation}
The first interesting case of a Severi variety for $R$ is when $\dim V(R,L)=1$; we call this case a {\it Severi curve}. From Equation \eqref{eq:rr}, Severi curves only appear when $l=2$, so we restrict ourselves to that situation. Since elements in the linear system $\abs{L}$ are \textit{bisections} of the anticanonical fibration $\pi$, we henceforth use the letter $B$ to denote them.
\begin{que}
What is the genus of the curve $V(R,L)$ for $L$ a bisection line bundle? If it is reducible, how many components does it have?
\end{que}
The simplest Severi curves occur when $g(L)=0$, in which case we have $|L|=V(R,L)\simeq \P^1$. For $g(L) = \frac{1}{2}L^2>0$, the Severi curve will have singularities corresponding to degenerate bisections. By construction, $V(R,L)$ embeds into $|L|\simeq \P^{g(L)+1}$, and its degree is known by Gromov-Witten theory \cite{oberdieck-pixton-elliptic}. This would give a crude upper bound on the genus by Castelnuovo's argument, but we will take a sharper approach using plane curves.
{
\renewcommand\thethm{\ref{thm:severi-connected-with-proof}}
\begin{thm}\label{thm:severi-connected}
For $(R,L)$ as above, the Severi curve $V(R,L)$ is connected.
\end{thm}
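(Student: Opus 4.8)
The plan is to realize $V(R,L)$ as the image of a space that is visibly connected, using the bisection structure of the fibration. A bisection $B \in |L|$ meets the generic fiber $E_t$ of $\pi$ in two points, so it determines a degree-2 cover $B \to \P^1$; since $l = -K_R \cdot B = 2$, every irreducible rational $B$ gives a rational double cover of $\P^1$, hence has exactly $2g(L)+2$ branch points (counted with multiplicity) over $\P^1$. The idea is to construct an incidence-type parameter space $\widetilde{V}$ dominating $V(R,L)$ that fibers over a connected base — morally the space of branch divisors together with gluing data — and argue its fibers are connected. I would first show $\widetilde{V}$ is irreducible (or at least connected) by a degeneration/monodromy argument: deform the branch divisor on $\P^1$ generically, and use the transitivity of the braid-group-type monodromy action to connect all irreducible rational bisections.

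Concretely, the key steps in order. \textbf{Step 1:} Establish that a general member of $V(R,L)$ is an irreducible rational bisection with only nodes as singularities, and that $V(R,L)$ is pure of dimension $1$ (this should follow from Equation \eqref{eq:rr} and a standard first-order deformation computation, possibly already available in the surrounding text). \textbf{Step 2:} Set up the normalization $\nu \colon \widetilde{V} \to V(R,L)$ and exhibit a morphism $\beta \colon \widetilde{V} \to \mathrm{Sym}^{2g(L)+2}(\P^1)$ (or to the relevant Hurwitz-type space) recording the branch locus of $B \to \P^1$. \textbf{Step 3:} Identify the generic fiber of $\beta$; I expect it to be a finite set acted on transitively by the fundamental group of the complement of the discriminant, so that $\widetilde{V}$ is connected once we know the base locus and the monodromy is transitive. \textbf{Step 4:} Prove transitivity of the monodromy by a standard degeneration: move the branch points so that two of them collide, producing a simpler bisection (e.g., one that degenerates into a union of a section and a rational curve, or into a fiber component plus a residual curve), and use such degenerations to link any two components — this is the elliptic-surface analogue of Harris's argument for $\P^2$. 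Alternatively, one can degenerate $(R,L)$ itself to a reducible or otherwise controlled surface and specialize the Severi curve.

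The main obstacle I anticipate is Step 4: controlling the boundary of $V(R,L)$, i.e., understanding precisely which degenerate (reducible or non-reduced) bisections appear as limits and verifying that they suffice to connect all components. On a rational elliptic surface the classification of such degenerations is subtler than on $\P^2$ or del Pezzos — a limiting curve may pick up a fiber component, a $(-1)$-section, or a multiple component, and one must check the limit stays inside $V(R,L)$ and genuinely links two putative components. A secondary technical point is ruling out that $\widetilde{V}$ splits for reasons invisible to the branch-divisor map (e.g., a disconnected Hurwitz datum); handling this may require tracking the gluing/monodromy data more carefully, or invoking the known Gromov-Witten degree of $V(R,L)$ from \cite{oberdieck-pixton-elliptic} together with a count of components to force connectedness. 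I would expect the cleanest route to be a hybrid: use a one-parameter degeneration of $L$ toward a line bundle with $g(L)$ small (ultimately $g=0$, where $V(R,L) \simeq \P^1$ is connected), and propagate connectedness upward, using that the total space of the family of Severi curves is connected and has connected general fiber.
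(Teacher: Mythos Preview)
Your proposal is not a proof but a sketch of several possible strategies, none of which is carried out, and it misses the route the paper actually takes. There is also a basic error: an irreducible \emph{rational} bisection $B$ has geometric genus $0$, so its normalization $\widetilde B\cong\P^1$ maps $2{:}1$ to $\P^1$ and by Riemann--Hurwitz has exactly \emph{two} branch points, not $2g(L)+2$; the paper's branch map is $f_L\colon\widetilde V(L)\to\Sym^2(\P^1)$, and this map is used later for the genus bound, not for connectedness. More seriously, your Step~4 --- transitivity of monodromy via controlled degenerations of bisections --- is precisely the hard content of a Harris-style argument, and on a rational elliptic surface the boundary of $V(R,L)$ (sections plus fibers, doubled sections, etc.) is delicate enough that this is a genuine open-ended project, not a step one can wave through. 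Your alternative of propagating connectedness along a family over varying $g$ has the same defect: connectedness of fibers does not in general pass from special to general fibers without further input.

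The paper avoids all of this by reducing to known results. Blow down a section $Z\subset R$ to obtain a general degree~$1$ del Pezzo $S$; then $\epsilon_*$ embeds $V(R,L)$ into $V(S,\epsilon_*L)$, and Testa's theorem gives irreducibility of $\overline\M_0^{\mathrm{bir}}(S,\epsilon_*L)$. The image $\epsilon_*(V(R,L))$ consists of curves passing through $p=\epsilon(Z)$ with multiplicity $\ge n=L\cdot Z$, but the na\"ive linear condition has codimension $\binom{n+1}{2}$ while the Severi-level codimension is only $n$, so Fulton--Hansen does not apply directly in $\abs{\epsilon_*L}$. The trick is to pass to $\overline\M_{0,n}^{\mathrm{bir}}(S,\epsilon_*L)$, which is irreducible (iterated universal curve over an irreducible base), and look at the evaluation map $\ev$ to $S^n$: a general fiber is birational to a codimension-$n$ linear slice of $V(S,\epsilon_*L)$, hence connected by Fulton--Hansen, and connectedness of fibers \emph{specializes} in an irreducible family, so $\ev^{-1}(p,\dots,p)$ is connected and its image under the cycle map is exactly $\epsilon_*(V(R,L))$. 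This is the missing idea in your proposal: rather than a direct monodromy analysis on $R$, one leverages irreducibility one step up the del Pezzo hierarchy together with Fulton--Hansen.
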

}
{
\renewcommand\thethm{\ref{thm:reducedseveri-with-proof}}
\begin{thm}\label{thm:reducedseveri}
The general element of any component of $V(R,L)$ has exactly $g = \frac{1}{2}L^2$ nodal singularities and no other singularities, and $V(R,L)$ is reduced of dimension 1.
\end{thm}
}
{
\renewcommand\thethm{\ref{thm:genusbound-with-proof}}
\begin{thm}\label{thm:genusbound}
The geometric genus of the Severi curve $V(R,L)$ is $O(g^{12+\epsilon})$, where $g = \frac{1}{2}L^2$.
\end{thm}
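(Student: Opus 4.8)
The plan is to bound $p_g(V(R,L))$ by realizing $V(R,L)$, birationally, as a plane curve whose degree grows like $g^{6+\epsilon}$, and then invoking the elementary bound $p_g \le \binom{d-1}{2}$ for a plane curve of degree $d$. The plane in question is $\Sym^2\P^1 \cong \P^2$, the space of degree-$2$ divisors on the base of $\pi$. Concretely, an irreducible rational bisection $B$ has normalization $\P^1$, and $\pi$ restricts to a degree-$2$ map $\P^1 \to \P^1$ which, by Riemann--Hurwitz, is branched along a divisor $\mathrm{br}(B) \in \Sym^2\P^1$ of degree $2$; sending $B \mapsto \mathrm{br}(B)$ defines a rational map $\phi\colon V(R,L) \dashrightarrow \P^2$. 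First I would show that $\phi$ extends to a morphism on the normalization $\wt V$ of $V(R,L)$ (automatic, $\wt V$ being a smooth curve) and is birational onto its image $W \subset \P^2$. For birationality one shows that a general rational bisection is recovered from its branch divisor: here Theorem~\ref{thm:reducedseveri} is the key input, since it pins down the deformation theory of a general $B$ (exactly $g$ nodes, a $1$-dimensional equisingular family) tightly enough to see that $d\phi$ is generically injective and that $\mathrm{br}$ separates a general pair of bisections. Granting this, $p_g(V(R,L)) = p_g(W) \le \binom{\deg W - 1}{2}$.

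It then remains to bound $\deg W$. By the above, $\deg W = \int_{\wt V}\phi^*\O_{\P^2}(1)$, which is the enumerative count of irreducible rational bisections $B \in |L|$ whose branch divisor lies on a fixed general line in $\P^2$; since a general line in $\Sym^2\P^1$ is the set of fibres of a fixed general degree-$2$ map $g\colon \P^1 \to \P^1$, this is the number of rational bisections whose two branch points are interchanged by $g$. I would express this number as a relative (or descendent) Gromov--Witten invariant of $R$: passing to the double cover of the base determined by $\mathrm{br}(B)$ turns $B$ into a section of a base-changed elliptic surface (generically an elliptic K3), and the tangency/branching conditions translate into incidence conditions on sections. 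Running this over the pencil of admissible double covers and assembling the resulting invariants as $L$ varies (so $g \to \infty$), the generating series is a quasi-modular form, as in \cite{oberdieck-pixton-elliptic}; the weight of the relevant form is at most $7$, and---crucially---it carries no $\Delta$ in the denominator, so its $q$-coefficients grow polynomially: $\deg W = O(g^{6+\epsilon})$, with the $\epsilon$ absorbing the divisor-sum contributions coming from coincidences among branch points. Combining with the plane-curve bound yields $p_g(V(R,L)) = O(g^{12+\epsilon})$.

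The main obstacle is the modular/enumerative step: identifying $\deg W$ with a Gromov--Witten count that is manifestly the coefficient of a quasi-modular form of controlled weight with polynomial growth, rather than a count (like the degree of $V(R,L)$ in $\P^{g+1}$ itself) whose generating series has $\Delta$ in the denominator and hence coefficients of exponential growth in $\sqrt g$ --- it is precisely the passage to the $2$-dimensional target $\Sym^2\P^1$ that tames the growth, and making this rigorous requires carefully tracking which relative conditions are actually being imposed on the sections of the base-changed surface. A secondary difficulty is establishing that $\phi$ is birational onto its image (or, failing that, bounding its degree and its ramification well enough that Theorem~\ref{thm:genusbound} still follows from Riemann--Hurwitz); I expect Theorem~\ref{thm:reducedseveri} together with a genericity argument on $R$ to suffice, but the degenerate bisections parametrized by the singular locus of $V(R,L)$, and the behaviour of $\phi$ there, will need separate attention.
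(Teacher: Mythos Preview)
Your overall architecture matches the paper's: realize $V(R,L)$ birationally as a plane curve via the branch map $f_L\colon\wt V(L)\to\Sym^2(\P^1)\cong\P^2$, bound the degree of the image, and apply the genus-degree formula. However, both of the two hard steps are handled differently in the paper, and your proposed arguments for them have gaps.

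\textbf{Birationality.} Theorem~\ref{thm:reducedseveri} gives you that a general $B$ has exactly $g$ nodes and a $1$-dimensional equisingular deformation space; this shows $f_L$ is generically \emph{unramified}, but not that it is birational. Two distinct rational bisections $B_1,B_2\in|L|$ with the same branch divisor are not ruled out by any infinitesimal consideration. The paper's argument (Lemma~\ref{lem:f_l_is_birational}) is global: if $f_L(B_1)=f_L(B_2)$, then the K3 surface $X$ obtained by base-changing $R$ along the double cover with that branch locus acquires four sections $s_1,s_1',s_2,s_2'$ whose projections to $\ZF^\perp(X)$ are distinct but have equal projection to the $\E8(-2)$ sublattice coming from $R$, forcing $\rho(X)\ge12$. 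A dimension count in the moduli of lattice-polarized K3s then shows this cannot happen over a $9$-dimensional family, so it fails for very general $R$.

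\textbf{The degree bound.} The paper does \emph{not} identify $\deg({f_L}_*[\wt V(L)])$ individually as the coefficient of a quasi-modular form of weight $\le7$; your claim to that effect via Oberdieck--Pixton is unsubstantiated, and it is not clear which relative GW invariant you have in mind or why it would fall under known modularity theorems. Instead, the paper constructs a test family of elliptic K3 surfaces over a conic $T\subset\Sym^2(\P^1)$ (base-changes of $R$ along varying double covers), and uses Noether--Lefschetz theory: the extended modularity theorem (Theorem~\ref{thm:quasimod}) identifies the generating series $\varphi(q)$ of NL intersection numbers with $T$ as a quasi-modular form of weight $10$. After stripping off explicit correction terms $\psi_{\ex}$ and $\psi_{\no}$ (excess intersection from the $\E8(-2)$ sublattice, and contributions from nodal fibers), what remains is $4\sum_L\deg({f_L}_*[\wt V(L)])$ summed over \emph{all} bisection line bundles of height $n$ --- still with coefficients $O(g^{9+\epsilon})$. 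The crucial extra step, which your proposal does not contain, is an averaging trick: by the moduli analysis in \S\ref{subsec:moduli_of_rl} (Corollary~\ref{cor:severi-invariance}), the degree $\deg({f_L}_*[\wt V(L)])$ depends only on $g(L)$ and on whether $L$ is ordinary or Weierstrass, so all terms in the height-$n$ sum with $g(L)=g$ are equal. One then divides by the number of such $L$, which by Lemma~\ref{lem:g-g-bisections-biject} is the $q^{g+2}$-coefficient of $E_4$ and hence bounded below by $g^3$. This yields $\deg({f_L}_*[\wt V(L)])=O(g^{6+\epsilon})$ and thence the $O(g^{12+\epsilon})$ genus bound. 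Getting the individual degree directly as a coefficient of a lower-weight form, as you propose, is exactly the content of Conjecture~\ref{conj:multconj} in the paper and is left open.
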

}
Theorem~\ref{thm:severi-connected} will be deduced from Testa's result on the irreducibility of Severi varieties for del Pezzo surfaces of degree one, combined with the Fulton-Hansen connectedness theorem.
Theorem \ref{thm:reducedseveri} uses the Nadel Vanishing Theorem and the technology of multiplier ideal sheaves. Theorem \ref{thm:genusbound} is the most involved, and we sketch the argument now.

A general element $B\in V(R,L)$ corresponds to an irreducible rational curve of degree two over $\P^1$, so we have a rational map $V(R,L) \dasharrow \Sym^2(\P^1)$, sending a bisection to its branch divisor in $\P^1$. After normalization, this map extends to a morphism
\[f_L:\widetilde{V}(R,L) \to \Sym^2(\P^1)\simeq \P^2.\]
which we show is birational onto its image. Next, the degree of this image is bounded using completed Noether-Lefschetz numbers associated to a certain family of elliptic K3 surfaces that are double covers of $R$. This gives the desired bound on the geometric genus of $V(R,L)$, since it is realized birationally as a plane curve. Lastly, we will give a conjectural formula for the precise degree, in terms of local intersection multiplicities of $f_L$ along the discriminant conic $\Delta \subset \Sym^2(\P^1) \simeq \P^2$. Geometrically, such intersections occur when the two branch points of a bisection collide.

One might compare our results with recent progress \cite{bruno-lc,chen-greer-yang} on Severi curves of a general projective K3 surface; these are loci of geometric genus {\it one} curves in the polarizing linear system. The Severi curves $V(R,L)$ considered here are isomorphic (via pullback) to subvarieties of Severi curves for special K3 surfaces of Picard rank 10. As such, our results are independent of but clearly related to the results mentioned for a general K3.

\subsubsection*{Heuristic for Conjecture \ref{conj:reducibility}.} The moduli space of stable maps $\overline{\M}_0(\P^1,2)$ is a $\mu_2$-gerbe over its coarse space $\Sym^2(\P^1)\simeq \P^2$. There is a universal family of K3 surfaces of Picard rank 10 over $\overline{\M}_0(\P^1,2)$ defined by taking the base change of $\pi:R\to \P^1$ via each stable map. This family has a corresponding period map which descends to coarse spaces as:
\[j: \P^2 \smallsetminus \Delta \to \mathbb D/\Gamma.\]

The indeterminacy locus $\Delta$ is the discriminant conic in $\Sym^2(\P^1)$. There are 12 lines in $\P^2$, each tangent to $\Delta$, over which the K3 surfaces obtained this way are nodal, but these correspond to removable singularities of $j$ away from $\Delta$. Passing to compactifications of $\D/\Gamma$, one can extend $j$ further. In this paper, we will use the Type II partial compactification:
$$j^{\II}: \P^2\smallsetminus \{12\text{ points}\} \to  (\D/\Gamma)^\II.$$

The pullback of the completed Noether-Lefschetz divisor series $\Phi^\II(q)$ via this map gives a $q$-series related to the degrees of the plane curves
$$f_L\left( \widetilde{V}(R,L) \right).$$

However, there are singularities in the image of $f_L$. These correspond set theoretically to the pre-images under $j$ of {\it codimension-2} Noether-Lefschetz cycles in $\D/\Gamma$. By W. Zhang's thesis \cite{weizhang}, the classes in $\mathrm{CH}^2(\D/\Gamma)$ of these cycles can be arranged as the $q$-expansion of a Siegel modular form. Using standard growth rates for the Fourier coefficients of Siegel theta functions, we expect the number of singularities in $f_L\left( \widetilde{V}(R,L) \right)$ to asymptotically exceed its arithmetic genus, whence reducibility of the Severi curves for $g(L) \gg 0$. To make such an argument rigorous, however, one would need to resolve the period map further to a full toroidal compactification:
\[
j^\Sigma\colon \Bl(\P^2) \to (\D/\Gamma)^\Sigma,
\]
and extend the Siegel modularity theorem of \cite{weizhang} to the closures of the special cycles of codimension 2. This has been carried out for special cycles of codimension 1 in \cite{greer-engel-tayou}, where one obtains the $q$-expansion of a mixed mock modular form. Siegel mock modular forms of genus 2 do not yet have a formal definition in the literature.

We end with a concrete example of our construction: when $g(L)=0$,
\[ V(R,L) = |L| \simeq \P^1. \]
Since the pencil has 8 singular fibers, the image of $f_L$ must be a rational quartic curve in $\P^2$. The three nodes in the image curve correspond to codimension 2 special cycles.\\\\
{\bf Acknowledgments.} We would like to extend our thanks to both Dori Bejleri and Aaron Landesman for helpful discussions while preparing this article, and to the anomyous referee for their close reading and constructive comments. The first author was partially supported by NSF grant DMS-2302548.

\section{Preliminaries}
Here we fix notation and recall some basic notions.
We work over $\C$ throughout, and we use the Fulton convention for projective space as 1-dimensional subspaces rather than quotients.

\subsection{Elliptic surfaces}\label{subsec:elliptic-surfaces}
For background on elliptic surfaces, we refer to \cite{miranda-basic-elliptic}.
Unless otherwise stated, all elliptic surfaces are assumed to be smooth and relatively minimal.
Note that an elliptic surface $X$ by definition comes equipped with a map $\pi\colon X\to C$ to a curve $C$.
Usually we will also assume that a zero section $Z\colon C\to X$ exists and has been fixed.

The basic invariant of $X$ is the degree $k$ of the pushforward $\pi_*(\mathcal N_{Z/X}^\vee)$, sometimes called the fundamental (or Hodge) line bundle.
The number of singular fibers of $X$ is then $12k$, counted with multiplicity, and $X$ is a product of $C$ with an elliptic curve if and only if $k=0$. The canonical divisor of $X$ is a sum of $2g-2+k$ fibers, where $g$ is the genus of $C$.
It follows from this and the adjunction formula that any \emph{section} of $X \to C$ has self-intersection $-k$; in particular, when $k\ge1$, all sections are rigid.

We will only encounter the case $C\simeq\P^1$, so that the fundamental line bundle is $\O_{\P^1}(k)$.
Among these, we will be concerned with those $X$ with $k=1$, which are precisely the \emph{rational elliptic surfaces}, and $k=2$, which are the \emph{elliptic K3 surfaces}.

The \emph{Mordell-Weil group} $\MW(X)$ is the group of sections $C\to X$ of $\pi$, with addition induced via the bijection of $\MW(X)\toi\MW(X_\eta)$ (given by restriction) to the Mordell-Weil group of the generic fiber $X_\eta$, which is an elliptic curve over the function field $\C(\P^1)$; on each smooth fiber, addition is simply given by the elliptic curve group law.

We denote by $\NS(X)$ the Néron-Severi group $\Pic(X)/\Pic^0(X)$ of $X$.
When $k\ge1$, we also have that $\pi^*\colon\Pic^0(C)\to\Pic^0(X)$ is an isomorphism.
In particular, in our case $C\simeq\P^1$ of interest, $\NS(X)=\Pic(X)$.
We assume for the rest of this section that $X$ has type $k\ge1$, i.e. that $X$ is not a product.

There are two important subgroups $\ZF(X)\subset\ZFF(X)\subset\NS(X)$.
$\ZF(X)$ is the subgroup generated by the zero section class $Z$ and the class $F$ of a smooth fiber, and $\ZFF(X)$ is generated in addition by all components of reducible fibers; most of the time, we will be in the situation where all fibers are irreducible and hence $\ZF(X)=\ZFF(X)$.

There is a homomorphism $\NS(X)\to\MW(X)$ taking a divisor to the closure of the sum in the generic fiber $X_\eta$ of its restriction to $X_\eta$. It is surjective and has kernel $\ZFF(X)$.
This is the \emph{Shioda-Tate exact sequence}:
\[
  0\to \ZFF(X)\to\NS(X)\to\MW(X)\to0.
\]
Note that this surjection has a natural set-theoretic section, taking each section in $\MW(X)$ to the corresponding divisor in $\NS(X)$.
The subgroup $\ZF(X)\subset\NS(X)$, equipped with the intersection form, is an indefinite unimodular lattice of rank 2.
Hence, we obtain an orthogonal decomposition $\NS(X)=\ZF(X)\oplus\ZF^\perp(X)$.

When $\ZF(X)=\ZFF(X)$ (i.e., when all fibers are irreducible), we obtain an isomorphism $\ZF^\perp(X)\toi\MW(X)$.
The inverse of this isomorphism takes a section $s\in\MW(X)$ to $\Pi([s])$, where $\Pi\colon\NS(X)\to\ZF^\perp(X)$ is the orthogonal projection.
By the Hodge index theorem, $\ZF^\perp(X)$ is always a negative definite lattice.
When $k=1$, $\ZF^\perp(X)$ is isomorphic to $\E8(-1)$.
Recall that the $\E8$ lattice is the unique rank 8 positive definite even unimodular lattice.
In general, for any lattice $E$, we write $E(n)$ for the lattice obtained by multiplying the values of the quadratic form by $n$.

\subsection{Quasi-modular forms}\label{subsec:qmod-forms}
For background on modular forms, we refer to \cite{diamond-shurman}. We will also make use of (quasi)-modular forms, but only via their Fourier expansions, so it is sufficient to view them as a special class of $q$-series ($q=e^{2\pi i \tau}$) with rational coefficients. The formal definition of quasi-modular forms as functions on the upper half plane can be found in \cite{kaneko-zagier-jacobi-and-quasimod}.

A modular form has a weight $k\in \Z_{\geq 0}$ and a level group $\Gamma\subset \SL_2(\Z)$. For each $k$ and $\Gamma$, the subset of modular forms
\[\Mod(\Gamma,k)\subset \Q\llbracket q \rrbracket\]
is a finite-dimensional $\Q$-subspace, with $\Mod(\Gamma,k)\subset\Mod(\Gamma',k)$ for $\Gamma'\subset\Gamma$. The direct sum
\[\Mod(\Gamma,*) \defeq \bigoplus_{k\geq 0} \Mod(\Gamma,k)\vspace{-3pt}\]
is a finitely generated graded $\Q$-algebra. For example, when $\Gamma=\SL_2(\Z)$, we have
\[\Mod(\SL_2(\Z),*)= \Q[\eis_4,\eis_6]. \]
The modular form $\eis_4$ plays a central role because it is the theta function of the $\E8$ lattice (see below).
Below we list the normalized Eisenstein series with weights indicated by the subscripts, where $\sigma_i$ is the $i$-th divisor sum function:
\begin{align*}
    \eis_2(q) &= 1-24 \sum_{n\geq 1} \sigma_1(n)q^n;\\
    \eis_4(q) &= 1+240 \sum_{n\geq 1} \sigma_3(n)q^n;\\
    \eis_6(q) &= 1-504 \sum_{n\geq 1} \sigma_5(n)q^n.
\end{align*}
The weight 2 Eisenstein series is not quite a modular form; it belongs to a slightly larger class of $q$-series called quasi-modular forms. Following \cite{kaneko-zagier-jacobi-and-quasimod}, we have:
\begin{defn}
  For any level group $\Gamma\subset \SL_2(\Z)$, the algebra $\QMod(\Gamma,*)\subset\Q\llbracket q\rrbracket$ of quasi-modular forms is the subalgebra of $\Q\llbracket q\rrbracket$ generated by $\Mod(\Gamma,*)$ and $\eis_2$.
\end{defn}
\noindent
In fact, $\QMod(\Gamma,*)$ is freely generated over $\Mod(\Gamma,*)$ by $\eis_2$, and it has an obvious weight grading:
  \[\QMod(\Gamma,*) \simeq \Mod(\Gamma,*)\otimes \Q[\eis_2].\]
\noindent
Quasi-modular forms are closed under a natural differential operator
\[D= q\frac{d}{dq}: \QMod(\Gamma,k) \to \QMod(\Gamma,k+2).\]
For example, we will use the following identity due to Ramanujan:
\begin{equation} \label{eq:ramanujan} D\eis_4 = \frac{\eis_2\eis_4 - \eis_6}{3}.\end{equation}
It is easily verified that if $F(q)\in \Mod(\SL_2(\Z),k)$, then $F(q^m)\in \Mod(\Gamma_0(m),k)$. For example,
\[\eis_{4,\infty}(q)\defeq \frac{1}{240}\left(\eis_4(q) - \eis_4(q^2)\right)\in \Mod(\Gamma_0(2),4)\]
appears in Section \ref{sec:multiplicity-conjecture}. Finally, we introduce the Jacobi theta function of weight 1/2:
\[\theta(q) \defeq 1+2 \sum_{n\geq 1} q^{n^2},\]
which appears as a correction term from the nodal K3 surfaces in Section \ref{sec:bounding}.

The divisor sum function $\sigma_k(n)$ satisfies the bound $n^k\le\sigma_k(n)\le Cn^k$ for $k>1$, and $\sigma_1(n)\le Cn^{1+\varepsilon}$ for all $\varepsilon>0$.
In general, the coefficients $a_n$ of a weight $k$ quasi-modular form satisfy the ``trivial bound'' $a_n=O(n^{k-1+\varepsilon})$.

\subsection{The $\E8$ lattice}\label{subsubsec:e8-lattice}
The positive definite lattice $\E8$ can be defined explicitly as the set of integer or half-integer vectors in $\R^8$ whose coefficients sum to an even integer:
\[
  \E8=\set[\Big]{\,\vec{v}\in \Z^8\cup(\tfrac12\Z^8\setminus\Z^8)\Bigm\vert \sum_{i=1}^8v_i\in 2\Z\,}.
\]
We have:
\begin{prop}\label{prop:e8-and-e4}
  The theta function of the \( \E8 \)-lattice is the normalized Eisenstein series \( \eis_4 \) -- i.e., the coefficient of \( q^n \) in \( \eis_4 \) is the number of vectors of norm \( 2 n \) in \( \E8 \).
\end{prop}
\begin{proof}
  See, e.g., \cite[VII.6.6 (i)]{serre-course}.
\end{proof}

We denote by $\Aut(\E8)$ the group of linear isometries.

\begin{lem}\label{lem:trans-on-roots}
$\Aut(\E8)$ acts transitively on the vectors of norm 2 and on those of norm 4.
\end{lem}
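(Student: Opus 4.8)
The plan is to argue directly from the explicit coordinate model of $\E8$, combining the obvious ``monomial'' automorphisms with reflections in roots. Write $e_1,\dots,e_8$ for the standard basis of $\R^8$, and let $G\le\Aut(\E8)$ be the subgroup generated by the $8!$ coordinate permutations together with the sign changes flipping an even number of coordinates. These manifestly preserve $\E8$ — each preserves whether a vector is integral or half-integral, and preserves the parity of its coordinate sum — so $G\cong W(D_8)$ (of order $8!\cdot2^7$) is a genuine subgroup of $\Aut(\E8)$. Moreover, for any root $r$ (a vector of norm $2$) the reflection $s_r(x)=x-(x\cdot r)\,r$ lies in $\Aut(\E8)$, since $\E8$ is an integral lattice. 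The strategy is then to enumerate the $G$-orbits on each of the two shells and to fuse them using well-chosen reflections $s_r$.

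\emph{Norm $2$.} A short count shows there are exactly two $G$-orbits: the $112$ integral vectors $\pm e_i\pm e_j$ with $i<j$, and the $128$ half-integral vectors $\tfrac12\sum_i\varepsilon_i e_i$ with $\varepsilon_i\in\{\pm1\}$ and $\prod_i\varepsilon_i=1$ (this last condition being exactly membership in $\E8$). To merge the two orbits, reflect $e_1+e_2$ in the half-integral root $r=\tfrac12(e_1+\dots+e_8)$: since $(e_1+e_2)\cdot r=1$ we get $s_r(e_1+e_2)=\tfrac12(e_1+e_2-e_3-\dots-e_8)$, which lies in the second orbit. Hence $\Aut(\E8)$ is transitive on the $240$ roots. (This also follows at once from the fact that the $E_8$ root system is irreducible and simply laced, so its Weyl group — generated by the $s_r$ and hence contained in $\Aut(\E8)$ — is already transitive on roots.)

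\emph{Norm $4$.} The analogous bookkeeping yields exactly three $G$-orbits, of sizes $16$, $1120$, and $1024$: the vectors $\pm2e_i$; the vectors $\pm e_i\pm e_j\pm e_k\pm e_l$ with $i<j<k<l$; and the half-integral vectors having one coordinate $\pm\tfrac32$ and seven coordinates $\pm\tfrac12$, subject to the parity condition defining $\E8$. (As a check, $16+1120+1024=2160$, the known number of norm-$4$ vectors.) Reflecting $2e_1$ in $r=\tfrac12(e_1+\dots+e_8)$ gives $\tfrac12(3e_1-e_2-\dots-e_8)$, merging the first and third orbits; reflecting $e_1+e_2+e_3+e_4$ in the root $r=\tfrac12(e_1+e_2+e_3-e_4-e_5+e_6+e_7+e_8)$, with which it has inner product $1$, gives $\tfrac12(e_1+e_2+e_3+3e_4+e_5-e_6-e_7-e_8)$, merging the second and third. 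So the three orbits coincide and $\Aut(\E8)$ is transitive on the $2160$ norm-$4$ vectors.

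The one step that needs genuine care is the orbit enumeration in the norm-$4$ case: one must correctly pin down which sign patterns of $\tfrac12(\pm3,\pm1,\dots,\pm1)$ satisfy $\sum_i v_i\in2\Z$, and then verify that the even-sign-change subgroup of $G$ still acts transitively on those admissible patterns once the position of the $\pm\tfrac32$-entry is fixed. Everything else — the norm-$2$ count, the two reflection computations, and the check that the maps used lie in $\Aut(\E8)$ — is immediate from the explicit model.
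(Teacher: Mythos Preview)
Your proof is correct and follows essentially the same strategy as the paper's: work in the explicit coordinate model, use the monomial subgroup (permutations and even sign changes) of $\Aut(\E8)$ to cut the shells into a few orbits, then fuse those orbits with explicit root reflections. The paper organizes the norm-$4$ case slightly differently---it first notes that every norm-$4$ vector is a sum of two roots and thereby reduces to transitivity on vectors of the form $r+(\tfrac12^8)$, needing only permutations plus one reflection---but the underlying mechanics are the same as yours.
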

\begin{proof}
The vectors of norm 2 are the roots of the $\E8$ root system, and in general, the Weyl group of any simply laced root system acts transitively on the roots \cite[Lemma~10.4.C]{humphreys-lie-rep}.

As for the norm 4 vectors, one verifies that these are precisely given by plus or minus a permutation of the following vectors (where superscripts denote repeated entries)
\[
(2,0^7)
\ \ (1^4,0^4)
\ \ (1^3,-1,0^4)
\ \ (1^2,-1^2,0^4)
\ \ (\tfrac32,\tfrac12^6,-\tfrac12)
\ \ (\tfrac32,\tfrac12^4,-\tfrac12^3)
\ \ (\tfrac32,\tfrac12^2,-\tfrac12^5)
\ \ (\tfrac32,-\tfrac12^7)
\]
whereas the roots are (again up permutation and sign)
\[
(1^2,0^6)\quad
(1,-1,0^6)\quad
(\tfrac12^8)\quad
(\tfrac12^2,-\tfrac12^6)\quad
(\tfrac12^4,-\tfrac12^4).
\]
From this, one checks directly that each norm 4 vector is a sum of two roots.

Since we already know that $\Aut(\E8)$ acts transitively on the roots, it therefore suffices to check that it acts transitively on norm 4 vectors of the form $r+(\frac12^8)$, with $r$ a root.
These are all permutations of $(\frac32,\frac12^6,-\frac12)$ and $(1^4,0^4)$.
But $\Aut(\E8)$ includes all permutations (the transpositions being reflections along the root $(1,-1,0^6)$ and its permutations), and $(\frac32,\frac12^6,-\frac12)$ is the reflection of $(1^4,0^4)$ along the root $(-\frac12,\frac12^3,-\frac12^3,\frac12)$.
\end{proof}

\begin{lem}\label{lem:parities-classf}
Each non-zero equivalence class in $\E8/2\E8$ contains a vector of norm 2 and consists entirely of vectors $v$ with norm $\equiv 2\mod 4$, or contains a vector of norm 4 and consists entirely of vectors with norm $\equiv 0\mod 4$.
\end{lem}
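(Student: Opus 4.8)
The plan is to reduce the statement to two facts: that the residue $v\cdot v\bmod 4$ descends to a function on $\E8/2\E8$, and that every coset of $2\E8$ contains a vector of norm at most $4$.

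\emph{First fact.} Since $\E8$ is even, $v\cdot v$ is always even, so $v\cdot v\bmod 4\in\{0,2\}$; and replacing $v$ by $v+2w$ changes $v\cdot v$ by $4(v\cdot w)+4(w\cdot w)$, which is divisible by $4$. Hence each coset $c\in\E8/2\E8$ carries a well-defined value $q(c)\in\{0,2\}$, and every vector in $c$ has norm $\equiv q(c)\bmod 4$. In particular the ``consists entirely of'' clauses in the statement are automatic once one exhibits a single representative of norm $2$ (respectively $4$); no coset can contain both a norm-$2$ and a norm-$4$ vector, so the two alternatives are mutually exclusive; and the trivial coset is the only one all of whose vectors have norm $0$.

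\emph{Second fact.} For $v\in\E8$ the minimum of $\abs{v-2w}^2$ over $w\in\E8$ equals $4\,\operatorname{dist}(v/2,\E8)^2\le 4\,\rho(\E8)^2$, where $\rho(\E8)$ is the covering radius of $\E8$. Since $\rho(\E8)=1$ (a classical fact; see Conway--Sloane), every coset has a representative of norm $\le 4$. Combining with the first fact: a nonzero coset $c$ has minimal occurring norm $2$ or $4$ (not $0$, as $c\ne 0$, and not $\ge 6$, by the covering-radius bound); if it is $2$ then $q(c)=2$ and $c$ satisfies the first alternative, and if it is $4$ then $q(c)=0$ and $c$ satisfies the second. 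This is precisely the assertion.

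The only genuine input beyond bookkeeping is the covering-radius bound, so that is where any difficulty lies. If a self-contained argument is wanted, it suffices to prove the weaker claim that $\operatorname{dist}(v/2,\E8)\le 1$ for every $v\in\E8$: using the presentation $\E8=D_8\cup\bigl(D_8+(\tfrac12^8)\bigr)$, round the coordinates of $v/2$ to the nearest even integers to obtain a representative of norm at most $8$, and when this representative has norm $>4$ (equivalently, too many coordinates of $v$ are odd) pass instead to the analogous rounding relative to the coset $D_8+(\tfrac12^8)$; a short case analysis on the parities of the coordinates of $v$ shows one of the two choices always yields norm $\le 4$, and the exact answer is then forced to agree with $q(c)$. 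A purely combinatorial alternative --- the $240$ roots give $120$ distinct nonzero cosets (two non-opposite roots differ by a vector of norm at most $6$, hence not in $2\E8$ since $\E8$ is even), and the $2160$ vectors of norm $4$ should fill the remaining $135$ cosets, with $1+120+135=256=\#(\E8/2\E8)$ --- is tempting, but making it rigorous still requires bounding the sizes of the norm-$4$ cosets, so the covering-radius route is the cleanest.
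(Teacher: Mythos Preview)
Your argument is correct. The first step --- that the norm modulo $4$ is constant on each coset of $2\E8$ --- is exactly what the paper does. For the second step, the paper instead argues directly, by induction on the sum of the absolute values of the coordinates of $u$, that every $2$-primitive vector is congruent mod $2\E8$ to one of norm $\le 4$; you replace this by the observation that the covering radius of $\E8$ equals $1$, so that $\min_{w\in\E8}\lvert v-2w\rvert^2 = 4\,\operatorname{dist}(v/2,\E8)^2 \le 4$. Your route is cleaner and more conceptual, and immediately gives the sharp bound, at the price of importing a nontrivial (if standard) fact about $\E8$; the paper's route is more self-contained but only sketched. Your parenthetical remarks --- the explicit $D_8$-coset rounding and the $1+120+135=256$ counting heuristic --- are reasonable alternatives, and you are right that the counting version needs an extra bound to be made rigorous.
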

\begin{proof}
First we note that if $u,v\in \E8$ are equal in $\E8/2\E8$, then $u^2\equiv v^2\mod 4$ since, setting $u-v=2w$, we have $u^2=(v+2w)^2=v^2+4v\cdot w+4w^2$.

Thus, it only remains to show that each 2-prim vector $u$ is equal mod $2\E8$ to some $v$ with $v^2\in\set{2,4}$. This can be seen by induction on the sum of the absolute values of the entries of $u$; for the base case, if this sum is $\le 4$ (and $u$ is 2-prim), then $u^2\le 4$.
\end{proof}

For the rest of this section, let $R\to\P^1$ be a rational elliptic surface with no reducible fibers.
We compute numerical invariants of sections and bisections of $R$ in terms of their projections to $\ZF^\perp(R)\cong\E8(-1)$ (see \S\ref{subsec:elliptic-surfaces}).

\begin{defn}
  The \emph{height} of a section $\P^1\to R$ (or more generally, of any curve in $R$) is its intersection number with the zero-section $Z$.
\end{defn}

\begin{prop}\label{prop:sec-proj-formula}
  Let $s,t\in\NS(R)$ be sections of heights $m$ and $n$ respectively. Then
  \[
    \Pi(s)\cdot\Pi(t) =
    s\cdot t - 1 - (m+n).
  \]
  In particular, $\Pi(s)^2=-2-2m$.
\end{prop}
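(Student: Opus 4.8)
The plan is to compute $\Pi$ explicitly on section classes, using only the Gram matrix of $\ZF(R)$ together with the two intersection numbers that any section satisfies. Recall that $\ZF(R)$ is the rank-$2$ unimodular sublattice spanned by the zero-section class $Z$ and a smooth fibre class $F$, with $Z^2=-1$ (since $R$ has type $k=1$), $Z\cdot F=1$, and $F^2=0$. By definition of the orthogonal decomposition $\NS(R)=\ZF(R)\oplus\ZF^\perp(R)$, the projection $\Pi(x)$ of $x\in\NS(R)$ is characterised by $x-\Pi(x)\in\ZF(R)$ and $\Pi(x)\perp\ZF(R)$.

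First I would pin down the $\ZF(R)$-component of a section. If $s$ is a section of height $m$, then $s\cdot Z=m$ by definition of height, while $s\cdot F=1$ because $s$ meets a general fibre transversally in one point. Writing $s-\Pi(s)=aZ+bF$, the orthogonality of $\Pi(s)$ to $Z$ and to $F$ gives $(aZ+bF)\cdot Z=m$ and $(aZ+bF)\cdot F=1$, i.e. $-a+b=m$ and $a=1$; hence $a=1$, $b=m+1$, so $\Pi(s)=s-Z-(m+1)F$. Similarly $\Pi(t)=t-Z-(n+1)F$ for a section $t$ of height $n$.

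The formula then follows by bilinearity: since $\Pi(s)$ is orthogonal to all of $\ZF(R)$, in particular to $t-\Pi(t)$, we have
\[
\Pi(s)\cdot\Pi(t)=\Pi(s)\cdot t=\bigl(s-Z-(m+1)F\bigr)\cdot t=s\cdot t-Z\cdot t-(m+1)\,F\cdot t=s\cdot t-n-(m+1),
\]
which is exactly $s\cdot t-1-(m+n)$. For the final assertion, take $s=t$; then $s^2=-1$ because every section of a rational elliptic surface has self-intersection $-k=-1$, so $\Pi(s)^2=-1-1-2m=-2-2m$. There is no real obstacle here beyond bookkeeping: the only step requiring care is solving the $2\times2$ linear system for the $\ZF(R)$-component against the indefinite Gram matrix, and remembering that the relevant self-intersection of a section is $-1$ rather than $0$.
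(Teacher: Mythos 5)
Your proof is correct and follows essentially the same route as the paper: both compute the orthogonal projection onto $\ZF(R)$ explicitly from the intersection numbers $Z^2=-1$, $F^2=0$, $Z\cdot F=1$, arriving at $\Pi(s)=s-Z-(m+1)F$ (the paper just organizes the computation via the orthogonal basis $Z$, $Z+F$ rather than solving the $2\times2$ system directly). Your use of $s\cdot F=1$, $s\cdot Z=m$, and $s^2=-1$ matches the facts recorded in \S\ref{sec:elliptic-surfaces}, so there is nothing to fix.
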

\begin{proof}
  We have $Z^2=-1$, $F^2=0$, and $Z\cdot F=1$. Hence, $Z$ and $\widetilde{F}=Z+F$ form an orthogonal basis of $\ZF(R)$. Hence, the orthogonal projection can be computed as $\Pi(s)=s-\frac{s\cdot Z}{Z^2}Z-\frac{s\cdot \widetilde{F}}{\widetilde{F}^2}\widetilde{F}$, from which the equation follows.
\end{proof}

\begin{prop}\label{prop:bisec-proj-formula}
  Given a bisection $B\in\NS(R)$ of $R$ of height $n$ and genus $g$, we have
  \[
    \Pi(B)^2=2g-(4n+4).
  \]
\end{prop}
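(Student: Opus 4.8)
The plan is to follow the pattern of Proposition~\ref{prop:sec-proj-formula}: first compute $B^2$ by adjunction, then subtract the self-intersection of the $\ZF(R)$-component of $B$, using that the decomposition $\NS(R)=\ZF(R)\oplus\ZF^\perp(R)$ is orthogonal, so that $B^2=\Pi(B)^2+(\mathrm{proj}_{\ZF(R)}B)^2$.

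First I would pin down $B^2$. Since $R\to\P^1$ is a rational elliptic surface ($k=1$, base genus $0$), the canonical class is a sum of $2\cdot 0-2+1=-1$ fibers, i.e.\ $K_R=-F$. A bisection satisfies $B\cdot F=2$, in particular $B$ is not a multiple of $F$, so the genus formula applies and reads $g=1+\tfrac12(B^2+B\cdot K_R)=1+\tfrac12(B^2-2)$, giving $B^2=2g$.

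Next I would compute the $\ZF(R)$-component. As in Proposition~\ref{prop:sec-proj-formula}, $Z$ and $\widetilde F=Z+F$ give an orthogonal basis of $\ZF(R)$ with $Z^2=-1$ and $\widetilde F^2=1$. From $B\cdot Z=n$ (the height) and $B\cdot\widetilde F=B\cdot Z+B\cdot F=n+2$, the orthogonal projection of $B$ onto $\ZF(R)$ is $-nZ+(n+2)\widetilde F$, whose self-intersection is $-n^2+(n+2)^2=4n+4$. Combining with Step~1, $\Pi(B)^2=B^2-(4n+4)=2g-(4n+4)$.

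I do not expect a genuine obstacle here; the only points requiring a moment's care are the identification $K_R=-F$ (this is exactly where the hypothesis that $R$ is a \emph{rational} elliptic surface, $k=1$, is used) and the fact that, unlike an arbitrary divisor class, a bisection meets a general fiber transversally enough ($B\cdot F=2\neq0$) for the adjunction/genus formula to be applied directly.
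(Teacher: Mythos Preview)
Your proof is correct and follows exactly the approach indicated in the paper, which simply says to use adjunction to get $B^2=2g$ and then ``proceed as in Proposition~\ref{prop:sec-proj-formula}''; you have merely made that computation explicit. (One small remark: the adjunction/genus formula holds for any effective divisor on a smooth surface, so the caveat about $B\cdot F\ne0$ is unnecessary.)
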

\begin{proof}
  It follows from the adjunction formula that $B^2=2g$. Now proceed as in Proposition~\ref{prop:sec-proj-formula}.
\end{proof}

\begin{lem}\label{lem:g-bisections-inject}
  Let $\mathcal{B}_g\subset\NS(R)$ be the subset consisting of divisor classes of arithmetic genus $g$ bisections.
  Then the restriction $\rstr{\Pi}{\mathcal{B}_g}:\mathcal{B}_g\to\ZF^\perp(R)$ is injective.
\end{lem}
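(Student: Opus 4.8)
The plan is to unwind the definition of $\Pi$ as the orthogonal projection of $\NS(R)=\ZF(R)\oplus\ZF^\perp(R)$ onto $\ZF^\perp(R)$: its kernel is exactly $\ZF(R)=\Z Z\oplus\Z F$. So if $B,B'\in\mathcal{B}_g$ satisfy $\Pi(B)=\Pi(B')$, then $B-B'=aZ+bF$ for some $a,b\in\Z$, and it suffices to show $a=b=0$. I would extract this from the two features that define membership in $\mathcal{B}_g$: being a bisection, and having arithmetic genus exactly $g$.

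First I would pair $B-B'$ with the fiber class $F$. Since $B$ and $B'$ are both bisections, $B\cdot F=B'\cdot F=2$, hence $(B-B')\cdot F=0$; on the other hand $(aZ+bF)\cdot F=a$ since $Z\cdot F=1$ and $F^2=0$. Therefore $a=0$ and $B-B'=bF$.

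Next I would invoke adjunction, using $K_R=-F$ for a rational elliptic surface. Any bisection $B$ of arithmetic genus $g$ then satisfies $2g-2=B^2+B\cdot K_R=B^2-2$, so $B^2=2g$, and likewise $B'^2=2g$. Expanding $2g=B^2=(B'+bF)^2=B'^2+2b(B'\cdot F)+b^2F^2=2g+4b$ forces $b=0$, whence $B=B'$.

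I do not anticipate a real obstacle: the argument is a two-line intersection computation once one records that $\ker\Pi=\ZF(R)$, that bisections meet a fiber in degree $2$, and that $K_R=-F$. The only thing worth flagging is that both hypotheses are genuinely used — the bisection condition pins down the $Z$-coefficient and the fixed genus pins down the $F$-coefficient. Equivalently, one could route the second step through Proposition~\ref{prop:bisec-proj-formula}: equal projections together with equal genus force equal heights $B\cdot Z=B'\cdot Z$ via $\Pi(B)^2=2g-(4n+4)$, and then pairing $B-B'=bF$ with $Z$ gives $b=0$.
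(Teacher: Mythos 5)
Your proof is correct and follows the same route as the paper: the paper's argument is exactly that equal projections force $B-B'=aZ+bF$, that the bisection condition kills $a$, and that equal arithmetic genus kills $b$ — you have simply written out the intersection computations the paper leaves implicit. No issues.
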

\begin{proof}
  If $\Pi(B_1)=\Pi(B_2)$, then $B_1$ and $B_2$ differ by some linear combination $aZ+bF$. We must have $a=0$ if $B_1$ and $B_2$ are both bisections, and we must then have $b=0$ if $B_1$ and $B_2$ are to have the same arithmetic genus.
\end{proof}

\begin{lem}\label{lem:g-g-bisections-biject}
  Let $\mathcal{B}_{g,n}\subset\mathcal{B}_g$ be the set of divisor classes of arithmetic genus $g$, height $n$ bisections. Then the restriction $\rstr{\Pi}{\mathcal{B}_{g,g}}:\mathcal{B}_{g,g}\to\ZF^\perp(R)$ is a bijection onto the set of $u\in\ZF^\perp(R)$ of norm $-2(g+2)$.

  In particular, by Proposition~\ref{prop:e8-and-e4}, $\#\mathcal{B}_{g,g}$ is the coefficient of $q^{g+2}$ in $\eis_4(q)$.
\end{lem}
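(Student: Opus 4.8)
\emph{Proof proposal.} The plan is to produce an explicit inverse to $\rstr{\Pi}{\mathcal B_{g,g}}$ on the set of norm $-2(g+2)$ vectors; essentially everything is a lattice computation inside the orthogonal decomposition $\NS(R)=\ZF(R)\oplus\ZF^\perp(R)$, and the one non-formal input is an effectivity statement established by Riemann--Roch and the classical identity $\theta_{\E8}=\eis_4$.

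First I would record that the map is well defined and injective. Injectivity is immediate from Lemma~\ref{lem:g-bisections-inject}, as $\mathcal B_{g,g}\subseteq\mathcal B_g$. For $B\in\mathcal B_{g,g}$, Proposition~\ref{prop:bisec-proj-formula} specialized to height $n=g$ gives $\Pi(B)^2=2g-(4g+4)=-2(g+2)$, so $\Pi(B)$ does lie in the claimed set.

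For surjectivity, given $u\in\ZF^\perp(R)$ with $u^2=-2(g+2)$, I would set
\[
  B \defeq u+2Z+(g+2)F\ \in\ \NS(R).
\]
Using $Z^2=-1$, $F^2=0$, $Z\cdot F=1$ together with $u\cdot Z=u\cdot F=0$, a direct computation gives $B\cdot F=2$, $B\cdot Z=g$, and $B^2=u^2-4+4(g+2)=2g$; since $\Pi$ is the orthogonal projection onto $\ZF^\perp(R)$, the orthogonal complement of $\ZF(R)=\langle Z,F\rangle$, also $\Pi(B)=u$. By adjunction (recall $K_R=-F$) these identities say precisely that $B$ has the numerical type of a height-$g$, genus-$g$ bisection class with projection $u$, so it remains only to check that $B$ is the class of an effective divisor. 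This follows from Riemann--Roch on $R$: $\chi(\O_R(B))=\chi(\O_R)+\tfrac12\bigl(B^2-B\cdot K_R\bigr)=1+\tfrac12(2g+2)=g+2$, while $h^2(\O_R(B))=h^0(\O_R(K_R-B))=0$ because $(K_R-B)\cdot F=-B\cdot F=-2<0$ and $F$ is nef (indeed base-point free), so no nonzero effective divisor meets $F$ negatively. Hence $h^0(\O_R(B))\ge g+2>0$, $B$ is effective, and therefore $B\in\mathcal B_{g,g}$ with $\Pi(B)=u$, as required. (Consistently with \eqref{eq:rr}, one even gets $\dim\abs{B}=g+1$ from Kodaira vanishing, but this is not needed.)

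Finally, the count drops out: under the isometry $\ZF^\perp(R)\cong\E8(-1)$ the target set corresponds to $\set{v\in\E8\mid v^2=2(g+2)}$, whose size is the coefficient of $q^{g+2}$ in the theta series $\theta_{\E8}(q)=\sum_{v\in\E8}q^{v^2/2}$; as $\theta_{\E8}$ is a weight-$4$ modular form for $\SL_2(\Z)$ with constant term $1$ and that space is one-dimensional, $\theta_{\E8}=\eis_4$. The crux of the argument---and the only place requiring more than bookkeeping---is the surjectivity, specifically the effectivity of $B$, for which the key point is the vanishing $h^2(\O_R(B))=0$ via nefness of $F$. (If one's working definition of ``bisection'' insists on an irreducible representative, one should additionally observe that the general member of $\abs{B}$ is an irreducible bisection, but this is not needed for the lattice bijection as stated.)
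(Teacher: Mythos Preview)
Your proof is correct. The injectivity step and the final counting identity $\theta_{\E8}=\eis_4$ are handled just as in the paper.

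The one substantive difference is in the surjectivity argument. You write down the candidate class $B=u+2Z+(g+2)F$ directly and then establish effectivity by Riemann--Roch together with the nefness of $F$ to kill $h^2$. The paper instead invokes the Shioda--Tate isomorphism $\ZF^\perp(R)\toi\MW(R)$ from \S\ref{sec:elliptic-surfaces}: given $u$, take the unique section $s$ with $\Pi(s)=u$ (so $s$ has height $g+1$), and set $B=s+Z$, which is visibly effective. One checks that $s=u+Z+(g+2)F$, so in fact $s+Z$ is exactly your $B$; the two arguments produce the same class but certify its effectivity differently. The paper's route is shorter and avoids Riemann--Roch entirely, leaning on the structure theory of elliptic surfaces already set up; your route is more self-contained, not relying on the existence of a section realizing every element of $\ZF^\perp(R)$. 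Either is perfectly adequate here.
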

\begin{proof}
  We have injectivity by Lemma~\ref{lem:g-bisections-inject}. For surjectivity, given $u\in\ZF^\perp(R)$, let $s$ be a section with $\Pi(s)=u$, so $s$ has height $g+1$.
  Then $s+Z$ is a bisection of height $(g+1)-1=g$ and genus $s\cdot Z-1=g$ as desired.
\end{proof}

We will also need the following variant of the above lemma; both variants will be generalized in  Corollary~\ref{cor:g-n-bisections-biject}.
See \S\ref{subsec:moduli_of_rl} for the definition of Weierstrass bisection line bundles.

\begin{lem}\label{lem:g-g-1-bisections-biject}
 Let $\mathcal{B}_{g,n}^\Wei\subset\mathcal{B}_{g,n}$ be the subset consisting of bisections with $L=\O(B)$ Weierstrass.
 Then the restriction $\rstr{\Pi}{\mathcal{B}^{\Wei}_{g,g-1}}:\mathcal{B}^{\Wei}_{g,g-1}\to\ZF^\perp(R)$ is a bijection onto the set of 2-divisible $u\in \ZF^\perp(R)$ of norm $-2g$.

  In particular, by Proposition~\ref{prop:e8-and-e4}, $\#\mathcal{B}^{\Wei}_{g,g-1}$ is the coefficient of $q^{g}$ in $\eis_4(q^4)$.
\end{lem}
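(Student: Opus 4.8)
The plan is to run the proof of Lemma~\ref{lem:g-g-bisections-biject} essentially verbatim, carrying the extra Weierstrass constraint through the orthogonal projection $\Pi$. Recall from \S\ref{subsec:moduli_of_rl} that $L=\O(B)$ is a Weierstrass bisection line bundle exactly when $B$ is linearly equivalent to a divisor of the form $2s+cF$ for some section $s$ of $\pi$ and some integer $c$; since $\Pi$ kills $\ZF(R)\ni F$, this forces $\Pi(B)=2\Pi(s)$, so $\Pi(B)$ is $2$-divisible in $\ZF^\perp(R)$. Injectivity of $\rstr{\Pi}{\mathcal{B}^{\Wei}_{g,g-1}}$ is then immediate, as $\mathcal{B}^{\Wei}_{g,g-1}\subseteq\mathcal{B}_g$ and $\rstr{\Pi}{\mathcal{B}_g}$ is injective by Lemma~\ref{lem:g-bisections-inject}. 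Moreover, for $B\in\mathcal{B}^{\Wei}_{g,g-1}$, Proposition~\ref{prop:bisec-proj-formula} with $n=g-1$ gives $\Pi(B)^2=2g-(4(g-1)+4)=-2g$, so $\Pi$ indeed lands in the set of $2$-divisible classes of norm $-2g$.

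For surjectivity, take a $2$-divisible $u=2v\in\ZF^\perp(R)$ with $u^2=-2g$, so that $v^2=-g/2$ (which forces $4\mid g$, as it must on the target side as well). Using the isomorphism $\MW(R)\cong\ZF^\perp(R)$, pick the section $s$ with $\Pi(s)=v$; by Proposition~\ref{prop:sec-proj-formula} its height $m$ satisfies $-2-2m=-g/2$, i.e. $m=g/4-1$. Put $B=2s+(g/2+1)F$. Then $\O(B)$ is Weierstrass by construction, $\Pi(B)=2\Pi(s)=u$, the relative degree is $B\cdot F=2$, and a one-line adjunction computation using $s^2=-1$, $s\cdot F=1$, $K_R=-F$ shows that $B$ has arithmetic genus $g$ and height $B\cdot Z=2m+(g/2+1)=g-1$. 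Hence $B\in\mathcal{B}^{\Wei}_{g,g-1}$ with $\Pi(B)=u$, which completes the bijection.

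Finally, $w\mapsto 2w$ identifies the $2$-divisible norm $-2g$ classes in $\ZF^\perp(R)\cong\E8(-1)$ with the norm $-g/2$ classes of $\E8(-1)$, equivalently with the norm-$(g/2)$ vectors of $\E8$; since the theta series of $\E8$ is $\sum_{w\in\E8}q^{w^2/2}=\eis_4(q)$, there are exactly $[q^{g/4}]\,\eis_4(q)=[q^{g}]\,\eis_4(q^4)$ of these (both zero unless $4\mid g$), giving the stated count. The argument is routine given the earlier lattice computations; the only points deserving care are matching the Weierstrass hypothesis with the divisor shape $2s+cF$ and confirming that $B=2s+(g/2+1)F$ genuinely represents a bisection in the sense used to define $\mathcal{B}_{g,n}$, exactly as $s+Z$ did in Lemma~\ref{lem:g-g-bisections-biject}.
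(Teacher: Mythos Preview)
Your proof is correct and follows essentially the same approach as the paper's: injectivity via Lemma~\ref{lem:g-bisections-inject}, surjectivity by constructing $B=2s+(\tfrac{g}{2}+1)F$ from the section $s$ with $\Pi(s)=v$. You supply more detail than the paper (verifying explicitly that the image lands in the $2$-divisible norm $-2g$ locus, and spelling out the theta-series count), but the argument is the same.
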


\begin{proof}
  We again have injectivity by Lemma~\ref{lem:g-bisections-inject}. For surjectivity, given $u\in\ZF^\perp(R)$ with $u=2v$, let $s$ be a section with $\Pi(s)=v$, so $s$ has height $\frac g4-1$.
  Then $2s+(\frac g2+1)F$ is a bisection of height $g-1$ and genus $g$ as desired.
\end{proof}

\section{Severi varieties}
Let $L$ be a line bundle on a surface $S$. Recall that for us, the \emph{Severi variety} $V(S,L)\subset\abs{L}$ is the closure in $\abs{L}\cong\P^r$ of the locus of irreducible rational curves in $\abs{L}$.

The \emph{Severi problem} asks whether $V(S,L)$ is irreducible.
Harris \cite{harris-severi} proved this in the case $S=\P^2$, and more recent work of Testa \cite{testa} proved it for $S$ any del Pezzo surface of degree $\ge2$, and for a general del Pezzo surface of degree 1 with $L\ne\omega_S\I$.

The expected dimension of a Severi variety $V(S,L)$ for line bundles $L$ with vanishing higher cohomology is given by the genus formula and Riemann Roch:
\begin{equation}\label{eq:expdimformula}
\begin{split}
r(L)-g(L) &= \frac{L^2 - K_S\cdot L}{2} + \chi(\O_S) - 1 - \frac{L^2 + K_S\cdot L}{2}-1\\
&= -K_S\cdot L - 2 + \chi(\O_S).
\end{split}
\end{equation}
The actual dimension agrees with this expected dimension in the cases of del Pezzo surfaces \cite{testa} and of rational elliptic surfaces (see Proposition \ref{expdimseveri}).

\subsection{Connectedness of $V(R,L)$}
We now prove a weak analogue of Testa's result in the case of rational elliptic surfaces $R$.
This is logically independent from the rest of the paper. The strategy is to use the irreducibility of Severi varieties for general degree 1 del Pezzo surfaces, combined with a connectedness theorem of Fulton-Hansen \cite[Corollary~1]{fulton-hansen}: if $X,Y\subset \P^N$ are irreducible varieties such that $\dim(X)+\dim(Y)>N$, then $X\cap Y$ is connected.

\begin{thm}\label{thm:severi-connected-with-proof}
Suppose $R$ is a general rational elliptic surface.
Then $V(R,L)$ is connected for any line bundle $L\ne\omega_R\I$ on $R$.
\end{thm}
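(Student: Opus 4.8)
The plan is to realize $R$ as a degeneration of degree 1 del Pezzo surfaces and transport Testa's irreducibility result across the degeneration, then apply Fulton-Hansen. Recall that a degree 1 del Pezzo $S$ is the blow-up of $\P^2$ at $8$ general points, and its anticanonical system $|\omega_S\I|$ is a pencil of cubics through those points with a single base point; $R$ is obtained by blowing up that base point, i.e. by specializing the $9$th point of $\Bl_9\P^2$ to lie on the unique member of the pencil through the other $8$. So first I would set up a family $\mathcal{S}\to T$ over a curve (or disk) $T$ with general fiber a general degree 1 del Pezzo and special fiber $\mathcal{S}_0 = R$, together with a line bundle $\mathcal{L}$ on the total space restricting to the given $L$ on $R$ and to line bundles $L_t \ne \omega_{S_t}\I$ on the nearby fibers (possible since $L \ne \omega_R\I$ and $\NS$ is locally constant in such a family). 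I would then form the relative Severi variety $\mathcal{V} \to T$ inside the relative linear system $\P(\pi_*\mathcal{L})$, whose fiber over $t \ne 0$ is $V(S_t, L_t)$ and whose fiber over $0$ contains $V(R,L)$.

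Next I would argue that $V(R,L)$, being the limit of the irreducible varieties $V(S_t,L_t)$, is connected: the relative Severi variety $\mathcal{V}$ is irreducible (its generic fiber is irreducible by Testa and it dominates $T$), and since properness gives that specialization of a connected fiber inside a proper family stays connected — more precisely, the fiber $\mathcal{V}_0$ over $0$ of a proper flat morphism with irreducible total space is connected, by Zariski's connectedness / Stein factorization applied to $\mathcal{V} \to T$ — one concludes $\mathcal{V}_0$ is connected. The subtlety here is that $V(R,L)$ might only be a component (or a union of components) of $\mathcal{V}_0$ rather than all of it: one must check that no spurious limiting curves (e.g. curves acquiring a component in the exceptional divisor of $R \to S$, which have smaller genus or are reducible) enter $V(R,L)$ but not $\mathcal{V}_0$, and conversely that the closure of the rational locus really is the whole special fiber. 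I expect the cleanest route is to instead work directly in $|L| \cong \P^r$ on $R$ and use Fulton-Hansen as the excerpt advertises: one exhibits $V(R,L)$ as an intersection $X \cap Y$ of two irreducible subvarieties of some ambient $\P^N$ with $\dim X + \dim Y > N$. The natural candidates come from the diagonal/secant construction — e.g. viewing rational bisections via the incidence of $|L|$ with the image of the $(2g)$-th symmetric product of a fixed curve, or via the two rulings coming from the $K3$ double cover — so that rationality becomes a linear-space-meets-variety condition and the dimension count forces connectedness.

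I would then carry out the Fulton-Hansen step: identify $N$, $X$, $Y$ explicitly (with $X$ perhaps a linear subspace of $\P^N$ cut out by the vanishing of sections, and $Y$ the closure of an appropriate family of rational curves, so that $X \cap Y \cong V(R,L)$), verify irreducibility of $Y$ from Testa together with the degeneration above (this is exactly where generality of $R$ is used), compute the dimensions using Equation \eqref{eq:rr} and the expected-dimension statement (Proposition \ref{expdimseveri}), and check the strict inequality $\dim X + \dim Y > N$. Finally I would treat the edge cases where $g(L) = 0$ (then $V(R,L) = |L| \cong \P^1$ is trivially connected) and where $|L|$ fails to be very ample, so that the embedding into $\P^N$ needs care.

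The main obstacle I anticipate is not the topology but the bookkeeping of the degeneration: ensuring the relative Severi variety is flat and irreducible over $T$, that its special fiber is exactly $V(R,L)$ (no extra components from curves meeting the $(-1)$-exceptional curve of $R\to S$, and no missing components), and that $\NS$ behaves well enough that $L$ deforms to a line bundle $\ne \omega_{S_t}\I$ with the matching linear system dimension. Once that dictionary is in place, the Fulton-Hansen dimension count and the handling of the non-very-ample and $g=0$ cases should be routine.
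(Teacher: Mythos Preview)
Your degeneration set-up contains a basic error: a rational elliptic surface $R=\Bl_9\P^2$ is \emph{not} a specialization of degree~1 del Pezzo surfaces. A degree~1 del Pezzo has $K^2=1$ and Picard rank~9, while $R$ has $K^2=0$ and Picard rank~10; these cannot appear as fibers of the same smooth proper family over a curve, since the second Betti number is constant. The sentence ``$R$ is obtained by blowing up that base point, i.e.\ by specializing the $9$th point'' conflates two different operations: blowing up an extra point (which changes topology) versus moving points in a fixed blow-up (which does not). The correct relationship, and the one the paper uses, is the blow-down map $\epsilon\colon R\to S$ to a degree~1 del Pezzo $S$; this is a morphism between fixed surfaces, not a degeneration.

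Given this, your relative Severi variety $\mathcal V\to T$ cannot be built as described, and even your fallback Fulton--Hansen sketch is too vague: you do not identify the varieties $X,Y\subset\P^N$ concretely, and the candidates you mention (symmetric products, K3 double cover) do not lead anywhere. The paper's argument is quite different in structure. It pushes $V(R,L)$ forward along $\epsilon_*$ into $V(S,\epsilon_*L)$, identifying the image as the locus of curves passing through $p=\epsilon(Z)$ with multiplicity $\ge n=L\cdot Z$. The naive linear codimension of this locus is $\binom{n+1}{2}$, which is too large for Fulton--Hansen to apply directly. The trick is to pass to the space $\Mb_{0,n}^{\mathrm{bir}}(S,\epsilon_*L)$ of stable maps with $n$ marked points (irreducible by Testa plus an iterated universal-curve argument), use Fulton--Hansen to see that a \emph{general} fiber of the evaluation map $\ev\colon\Mb_{0,n}^{\mathrm{bir}}(S,\epsilon_*L)\to S^n$ is connected (it is a codimension-$n$ linear slice of $V(S,\epsilon_*L)$), and then specialize to the fiber over $(p,\dots,p)$. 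Connectedness specializes, and the image of that special fiber under the cycle map is exactly $\epsilon_*(V(R,L))$. The key idea you are missing is this replacement of a single high-codimension condition by $n$ codimension-1 point conditions imposed via marked points, followed by specialization of the points rather than of the surface.
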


\begin{proof}
Let $Z$ be a section of $R$, which is a $(-1)$-curve by the genus formula.
Blowing down $R$ at $Z$, we obtain a general degree one del Pezzo surface $S$, as explained in \cite{ascher-bejleri-moduli-of-del-pezzo}.
Let $\epsilon\colon R\to S$ be the blowdown map and $p=\pi(Z)\in S$. The proper pushforward on 1-cycles gives an injective map
\[\epsilon_* \colon |L| \to |\epsilon_* L|. \]
Since the image of a rational curve remains rational, $\epsilon_*$ restricts to an embedding of Severi varieties:
\[ \epsilon_*\colon V(R,L) \to V(S,\epsilon_*L). \]
Let $n$ be the intersection number $L\cdot Z$.
A general element $C$ in $V(R,L)$ is irreducible, so it does not contain $Z$ as a component unless $L=\O_R(Z)$.
The image of $\epsilon_*$ consists of curves in $|\epsilon_*L|$ which pass through $p$ with multiplicity $\geq n$.
This is a linear condition on $|\epsilon_*L|$ of codimension $\le \binom{n+1}{2}$. However, the codimension of the image $\epsilon_* (V(R,L)) \subset V(S,\epsilon_*L)$ is only $n$. Hence the hypotheses of the Fulton-Hansen connectedness theorem are not fulfilled.

Following \cite{testa}, let $\Mb_0^{\mathrm{bir}}(S,\epsilon_*L)$ be the closure in $\Mb_0(S,\epsilon_*L)$ of the locus of stable maps that are birational onto their image. Testa proves that this $\Mb_0^{\mathrm{bir}}(S,\epsilon_*L)$ is irreducible, and its coarse space is birational to the Severi variety $V(S, \epsilon_*L)$. Consider now the moduli stack of stable maps with $n$ marked points, $\Mb_{0,n}(S,\epsilon_*L)$, and let $\Mb_{0,n}^{\mathrm{bir}}(S,\epsilon_*L)$ be the pre-image $\pi^{-1}\left(\Mb_{0}^{\mathrm{\mathrm{bir}}}(S,\epsilon_*L)\right)$, where $\pi$ is the forgetful map
\[ \pi:\Mb_{0,n}(S,\epsilon_*L) \to \Mb_{0}(S,\epsilon_*L) \]
Observe that $\Mb_{0,n}^{\mathrm{bir}}(S,\epsilon_* L)$ is irreducible because it can be constructed iteratively by taking the (flat) universal family of rational curves whose general fiber is irreducible. We have an evaluation map $\ev: \Mb_{0,n}^{\mathrm{bir}}(S,\epsilon_*L) \to S^n$ which takes the image of the stable map at the $n$ marked points.
A general fiber $\ev^{-1}({\bf s})$ for ${\bf s}\in S^n$ is birational to a codimension $n$ linear section of $V(S,\epsilon_*L)$.
This is the intersection of the irreducible projective variety $V(S,\epsilon_*L) \subset \abs{\epsilon_* L}$ with a linear subspace, which is likewise irreducible, and we may thus deduce that it is connected by the Fulton-Hansen theorem. The dimension assumption $\dim V(S,\epsilon_*L) = n + \dim V(L) > n$ of that theorem is satisfied by \eqref{eq:expdimformula}.

Since connectedness of fibers specializes in an irreducible family, it follows that the fiber $\ev^{-1}(p,p,\dots,p)$ is connected. The image of this fiber under the cycle map $\Mb_{0,n}^{\mathrm{bir}}(S,\epsilon_*L) \to V(S,\epsilon_*L)$ is also connected, and this is precisely $\epsilon_*(V(R,L))$.
\end{proof}

\subsection{Reducedness of $V(R,L)$}

Let $R$ be a general rational elliptic surface, and let $B\subset R$ be a rational bisection parametrized by a general point of any component of the Severi curve $V(R,L)$.
We prove that $B$ has only nodal singularities, and moreover that it has a 1-dimensional space of first order equisingular deformations.
A key observation is that the local defining equation of a bisection near any singular point must be $y^2 = x^d$, with $d\geq 2$, so it suffices to rule out $A_{d-1}$ singularities for $d\geq 3$ on a general $B\in V(L)$. We assume throughout this section that $L=\O(B)$ is ample. This is true when $g(L)>0$; for $g(L)=0$ we have $V(L)=|L|\simeq \P^1$, and the results of this section hold trivially.

\begin{prop}\label{prop:jacobianideal}
    If $s_1,\dots, s_n\in B$ are isolated singularities with local analytic equations $y^2=x^{d_i}$, then the first order equisingular deformations of $B$ in its linear system are given by
    $$\Hm^0(B,L_B\otimes \Jac(B))\subset \Hm^0(B,L_B),$$ where $\Jac(B)$ is the Jacobian ideal cosupported on $\{s_1,\dots,s_n\}$ defined locally by $(y,x^{d_i-1})$.
\end{prop}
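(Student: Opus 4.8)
The plan is to work locally at each singular point $s_i$ and then patch. First I would set up the deformation-theoretic framework: a first-order deformation of $B\subset R$ inside $\abs{L}$ corresponds to a section $\sigma\in\Hm^0(B,L_B)=\Hm^0(B,N_{B/R})$, where the identification $L_B\cong N_{B/R}$ comes from adjunction; the deformations that keep the linear system fixed are all of these (since $\Hm^0(R,L)\twoheadrightarrow\Hm^0(B,L_B)$ by the long exact sequence in cohomology and the vanishing of $\Hm^1(R,\O_R)$ for $R$ rational). So the only real content is to identify, locally near each $s_i$, which sections of $L_B$ correspond to \emph{equisingular} deformations, i.e.\ those that preserve the analytic type $y^2=x^{d_i}$.

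Next I would do the local computation. Near $s_i$ the curve $B$ is cut out in the smooth surface by $g_i(x,y)=y^2-x^{d_i}$, and $L_B$ is locally free of rank one on $B$, so a local section of $L_B$ is represented by a function germ $h(x,y)$ modulo $(g_i)$. The deformation $g_i+\epsilon\, \tilde h$ (with $\tilde h$ a lift of $h$) is equisingular — meaning, after an analytic change of coordinates depending on $\epsilon$, it is again of the form $y^2=x^{d_i}$ — precisely when $\tilde h$ lies in the equisingularity ideal, which for the plane-curve singularity $y^2-x^{d_i}$ is exactly the Jacobian (Tjurina) ideal $\Jac_i=(\partial_y g_i,\partial_x g_i)=(2y,\,d_i x^{d_i-1})=(y,x^{d_i-1})$. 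This is the standard fact that for a plane curve singularity the equisingular deformations coincide with those tangent to the $\mu$-constant (here $=\mu$-constant $=$ Tjurina, since these are quasihomogeneous) stratum; I would cite the relevant statement from singularity theory (e.g.\ the theory of equisingular deformations of plane curve germs, Greuel–Lossen–Shustin or Wahl). The upshot: locally at $s_i$, the equisingular-deformation subspace of $L_B$ is $L_B\otimes\Jac_i$, where $\Jac_i$ is the ideal sheaf locally generated by $(y,x^{d_i-1})$ in $\O_{B,s_i}$.

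Finally I would globalize: away from the $s_i$ the condition is vacuous (a deformation of a smooth point of $B$ is automatically ``equisingular''), so $\Jac(B)$ is the ideal sheaf on $B$ equal to $\O_B$ away from $\{s_1,\dots,s_n\}$ and to $\Jac_i$ near $s_i$, and the global first-order equisingular deformations are exactly the sections of $L_B$ that land in this ideal at every $s_i$, i.e.\ $\Hm^0(B,L_B\otimes\Jac(B))$. Strictly speaking I should also check that these local equisingular conditions are \emph{independent} and impose the expected codimension, but that refinement is not claimed in this Proposition (it is the subject of the surrounding argument, via Nadel vanishing), so here it suffices to identify the subspace. The main obstacle is the local step: pinning down precisely the right ``equisingularity ideal'' for $y^2=x^{d}$ and matching it with the naive Jacobian ideal $(y,x^{d-1})$ — one must be careful that for these quasihomogeneous singularities the $A_{d-1}$-equisingular stratum really is cut out (to first order) by the Tjurina ideal, rather than by the larger ideal that would arise for a general (non-quasihomogeneous) singularity, and that the identification $L_B\cong N_{B/R}$ is the one under which the deformation-to-section dictionary sends the Jacobian ideal to the equisingular locus.
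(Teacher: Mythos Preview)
Your proposal is correct and follows essentially the same approach as the paper. The paper's proof is much terser---it simply notes that $A_{d-1}$ singularities are quasi-homogeneous, so the equisingular ideal coincides with the Jacobian ideal, and cites \cite{dedieu-sernesi} and \cite{harris-diaz-ideals}---whereas you spell out the surrounding deformation-theoretic framework (the identification $L_B\cong N_{B/R}$, surjectivity of $\Hm^0(R,L)\to\Hm^0(B,L_B)$, and the local-to-global patching) that the paper leaves implicit.
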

\begin{proof}
    Since the $A_{d-1}$ singularities are defined by quasi-homogeneous polynomials, the equisingular ideal coincides with the Jacobian ideal, which is generated by the partial derivatives of the equation of the singularity \cite{dedieu-sernesi}. See \cite{harris-diaz-ideals} for the deformation theory of plane curve singularities.
\end{proof}
Recall that the $\delta$-invariant of the planar singularity $y^2=x^d$ is equal to $\delta(d)=\lfloor d/2 \rfloor$. Thus, if $B$ is an irreducible rational bisection, then
\begin{equation} \label{eq:genus-sum-of-deltas}
\sum_{i=1}^n \lfloor d_i/2 \rfloor = g(L).
\end{equation}
The following lemma is a vanishing theorem concerning a larger ideal sheaf $\mathcal I$ containing $\Jac(B)$.

\begin{lem}\label{lem:vanishingtheorem}
    Let $L$ be a bisection line bundle on $R$, and let $B\in\abs{L}$ be a rational bisection. Consider the ideal sheaf $\cI\subset\O_B$ cosupported on the singularities $\{s_1,\dots,s_n\}$ of $B$ defined locally as follows. If $s_i$ is a node of $B$, then $\mathcal I_{s_i} = (y,x)$. If $s_i$ is a singularity of type $y^2 = x^{d_i}$ with $d_i\geq 3$, then $\mathcal I_{s_i} = (y,x^{\delta(d_i)+1})$. Then $\Hm^1(B,L_B\otimes \mathcal I)\cong 0$.
\end{lem}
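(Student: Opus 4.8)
## Proof proposal

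The plan is to move the computation to the normalization $\nu\colon\widetilde B\cong\P^1\to B$ (recall $B$ is rational) and to compare $\cI$ with the conductor ideal $\mathfrak c\subseteq\O_B$ of $\nu$. A first local check at $y^2=x^d$ shows $\mathfrak c_s=(y,x^{\lfloor d/2\rfloor})$, so $\cI_s=(y,x^{\delta(d)+1})\subseteq\mathfrak c_s$ when $d\ge 3$, while $\cI_s=\mathfrak m_s=\mathfrak c_s$ at a node; thus $\cI\subseteq\mathfrak c$, with $\mathfrak c/\cI$ a skyscraper whose length is the number of non-nodal singularities of $B$. (Alternatively one can pull everything back to $R$: if $\widetilde\cI\subseteq\O_R$ is the preimage of $\cI$, then $0\to\O_R\to\widetilde\cI\otimes L\to\cI\otimes L_B\to 0$ and $\Hm^1(\O_R)=\Hm^2(\O_R)=0$ give $\Hm^1(B,L_B\otimes\cI)\cong\Hm^1(R,\widetilde\cI\otimes L)$, which is the form suited to Nadel vanishing as announced in the introduction.)

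Next I would dispose of $\mathfrak c$ itself. The conductor identity $\mathfrak c=\nu_*\widetilde{\mathfrak c}$ (for an ideal $\widetilde{\mathfrak c}$ on $\widetilde B$) together with the projection formula give $L_B\otimes\mathfrak c\cong\nu_*\bigl(\nu^*L_B\otimes\widetilde{\mathfrak c}\bigr)$. Now $\deg\nu^*L_B=L^2=2g$, whereas $\widetilde{\mathfrak c}$ has degree $-2\sum_i\delta(d_i)=-2g$ (a singularity $y^2=x^d$ contributes $-2\lfloor d/2\rfloor$, split over its one or two branches). Hence $\nu^*L_B\otimes\widetilde{\mathfrak c}\cong\O_{\P^1}$, so $\Hm^1(B,L_B\otimes\mathfrak c)=\Hm^1(\P^1,\O)=0$ and $\Hm^0(B,L_B\otimes\mathfrak c)$ is one–dimensional. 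The long exact sequence of $0\to L_B\otimes\cI\to L_B\otimes\mathfrak c\to L_B\otimes(\mathfrak c/\cI)\to 0$ then identifies $\Hm^1(B,L_B\otimes\cI)$ with the cokernel of $\Hm^0(B,L_B\otimes\mathfrak c)\to\Hm^0(B,L_B\otimes(\mathfrak c/\cI))$. If $B$ is nodal, $\cI=\mathfrak c$ and we are done; and if $B$ has a one–branch singularity $y^2=x^d$ ($d$ odd), then near it $\cI$ is again the pushforward of a power of the maximal ideal of $\widetilde B$, so the very same pushforward–plus–degree–count argument applies directly to $\cI$ (the relevant line bundle on $\P^1$ only drops to $\O(-1)$, still with vanishing $\Hm^1$).

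The remaining case — and the place I expect all the real difficulty to sit — is a two–branch singularity, where $\cI$ is genuinely not pulled back from $\widetilde B$. Here one should first note, by a dimension count on $|L|\cong\P^{g+1}$ (an $A_k$–singularity being a $k$–codimensional condition, while $\tfrac12 L^2=g$), that on a general $R$ a rational bisection carries at most one non–nodal singularity, and that if it is two–branch it must be an $A_3$ (a tacnode, accompanied by $g-2$ nodes). For such $B$ the claim amounts to showing that the distinguished generator $\sigma$ of the one–dimensional space $\Hm^0(B,L_B\otimes\mathfrak c)$ — concretely a degree–$2g$ form on $\P^1$ whose zero divisor is the conductor divisor — does not lie in $L_B\otimes\cI$ near the tacnode, i.e. that the two ``branch–leading–coefficients'' of $\sigma$ at the two preimages of the tacnode do not cancel; this non–vanishing has to be wrung out of the genericity of $R$ (the tacnodal rational bisections in $|L|$ forming a finite set). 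The Nadel route faces the mirror–image difficulty: one wants to realize $\widetilde\cI$ as containing a multiplier ideal $\J(R,D)$ with $\lfloor D\rfloor=0$ and $L-K_R-D=\epsilon L+F-D$ big and nef, taking $D=(1-\epsilon)B$ plus small multiples of auxiliary curves through the singular points (so that $\mathrm{mult}_s D>2$ at each node, forcing $\J(D)_s\subseteq\mathfrak m_s$, and a suitably tangent curve at the non–nodal point, forcing $\J(D)_s\subseteq(y,x^{\delta(d)+1})$ via the monomial valuation $x\mapsto1,\ y\mapsto\delta(d)+1$); the correction needed at the non–nodal point carries a non–negligible coefficient, so the crux is to choose $D$ — ideally spreading the corrections over a single auxiliary curve of controlled class, using that the singular points lie on $B$ — so as to keep $L-K_R-D$ positive. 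Everything apart from this two–branch case is routine.
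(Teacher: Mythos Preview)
Your conductor comparison is clean: $\cI\subseteq\mathfrak c$ with $L_B\otimes\mathfrak c\cong\nu_*\O_{\P^1}$, so $\Hm^1(L_B\otimes\mathfrak c)=0$ and the purely nodal case follows immediately. But the lemma is stated for an \emph{arbitrary} rational bisection on a general $R$, with no bound on the number $\ell$ of non-nodal singularities, and for $\ell\ge2$ your normalization argument cannot succeed. If all non-nodal singularities are one-branch, your own computation gives $\nu^*L_B\otimes\cI'\cong\O_{\P^1}(-\ell)$, whose $\Hm^1$ vanishes only for $\ell\le1$; in general, your long exact sequence identifies $\Hm^1(L_B\otimes\cI)$ with the cokernel of a map from the one-dimensional space $\Hm^0(L_B\otimes\mathfrak c)$ to the $\ell$-dimensional space $\Hm^0(L_B\otimes\mathfrak c/\cI)$. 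Your proposed remedy---the dimension count that a rational bisection on a general $R$ has $\ell\le1$---is precisely the theorem that \emph{follows} the lemma in the paper and is deduced from it. A naive expected-codimension count for $A_k$-singularities only bounds expected dimensions; turning those into actual dimensions is exactly what the vanishing of $\Hm^1$ is for. So the argument is circular. Even in the case $\ell=1$ with a tacnode you acknowledge that the required non-vanishing of the distinguished section is left unproved.

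The Nadel route you sketch at the end is what the paper actually carries out, and the point is that it works uniformly in $\ell$. One takes $D=aB+b\Gamma$ with $\Gamma\in|B+mF|$ an auxiliary curve chosen (using the double cover $R\to\F_n$) to pass through every singularity of $B$ with a prescribed local shape: multiplicity $3$ at each node and an $A_d$-singularity at each $A_{d-1}$ of $B$. A single local computation on the log resolution of $y^2=x^d$ then shows $\J(D)\subseteq\cI$ at \emph{every} singular point for the same pair $(a,b)$, and the global positivity $L+F-D$ ample reduces to the two numerical constraints $a+b<1$ and $bm<1$. All the local inequalities (including the nodal one $2a+3b\ge2$) are satisfied simultaneously by taking $a=1-\tfrac54 b$ with $b$ small, so the ``non-negligible coefficient'' you worry about is absorbed by $\Gamma$ and no case analysis on the branch structure is needed.
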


\begin{proof}
  We will employ the theory of multiplier ideals and the Nadel Vanishing Theorem (see e.g. \cite[Theorem~9.4.8]{lazarsfeld-positivity_2}), which states that given a line bundle $L$ and an effective $\Q$-divisor $D$ such that $L\otimes\O(-D)$ is nef and big, the \emph{multiplier ideal} $\J(D)$ associated to $D$ satisfies
  $$\Hm^i(R,\O(K_R)\otimes L\otimes\J(D))\cong0$$
  for $i>0$. We claim that it suffices to find an effective $\Q$-divisor $D$ such that
  \begin{enumerate}[(i)]
  \item\label{item:recipe-ample} $L\otimes\O(F-D)$ is nef and big (where $F$ is the fiber class);
  \item\label{item:recipe-cosupport} $\J(D)$ has finite cosupport;
  \item\label{item:recipe-inclusion} $\J(D)\subset \cI$.
  \end{enumerate}
  Indeed, since $K_R=\O(-F)$, \ref{item:recipe-ample} combined with the Nadel Vanishing Theorem implies that $\Hm^i(L\otimes \J(D))\cong 0$ for $i>0$. Now consider the following exact sequence obtained by tensoring the ideal sequence for $B \subset R$ with $L\otimes \mathcal{J}(D)$:
  $$
  0 \longrightarrow \operatorname{\underline{Tor}}_1^{\mathcal{O}_R}(\mathcal{O}_B,L\otimes\mathcal{J}(D)) \longrightarrow L(-B)\otimes \mathcal{J}(D) \longrightarrow L\otimes \mathcal{J}(D) \longrightarrow L_B\otimes \mathcal{J}(D) \longrightarrow 0.
  $$
  If $\mathcal{F}$ denotes the kernel of the restriction $L\otimes \mathcal{J}(D) \to L_B\otimes \mathcal{J}(D)$ then since $H^1(L\otimes \mathcal{J}(D)) = 0$ we have an injection
  $$
  H^1(L_B\otimes \mathcal{J}(D)) \hookrightarrow H^2(\mathcal{F})
  $$
  and, since $R$ has dimension 2, we also have a surjection
  $$
  H^2(L(-B)\otimes \mathcal{J}(D)) \twoheadrightarrow H^2(\mathcal{F}).
  $$
  So if $H^2(L(-B)\otimes \mathcal{J}(D)) = 0$ then we would have that $H^1(L_B\otimes \mathcal{J}(D)) = 0$. But $H^2(L(-B)\otimes \mathcal{J}(D))$ sits in the exact sequence
  $$
  H^1(L(-B)\otimes \mathcal{O}_R/\mathcal{J}(D)) \longrightarrow H^2(L(-B)\otimes \mathcal{J}(D)) \longrightarrow H^2(L(-B))
  $$
  and therefore must be 0 since the left term vanishes by \ref{item:recipe-cosupport}, and the right term vanishes because $L(-B) \cong \mathcal{O}_R$ (and the structure sheaf of a smooth rational surface has no higher cohomology). Thus,
  $$
  H^1(L_B\otimes \mathcal{J}(D)) = 0.
  $$
  But now by \ref{item:recipe-inclusion}, we have an inclusion $L_B\otimes\J(D)\hto L_B\otimes\cI$, and by \ref{item:recipe-cosupport}, we get that $\Hm^1(B,L_B\otimes \cI/\J(D))\cong0$, since $\mathcal{I}/\mathcal{J}(D)$ must be supported in dimension 0. The associated long exact sequence gives a surjection
  $$\Hm^1(B,L_B\otimes\J(D))\twoheadrightarrow\Hm^1(B,L_B\otimes\cI).$$
  and so $\Hm^1(B, L_B\otimes\cI)\cong0$, as desired.

  It now remains to find $D$ satisfying \ref{item:recipe-ample}-\ref{item:recipe-inclusion}.
  We will take $D=aB+b\Gamma$, with $a,b\in\Q_{>0}$ to be determined, where $\Gamma\in\abs{B+mF}$ is a curve to be described presently ($m\gg 0$ an integer). In terms of these constants, condition \ref{item:recipe-ample} is the nef-and-bigness of
  $$(1-a-b)L + (1-bm)F,$$
  which is implied by $a+b<1$ and $b<1/m$ using the numerical criterion: it is a sum of two nef divisors, and $L^2 = 2g$. Note also that $0<a,b<1$ implies condition \ref{item:recipe-cosupport} because the multiplier ideal $\mathcal{J}(D)$ is trivial precisely on the klt locus of the pair $(R,D)$; see \cite[Definition 9.3.9, Remark 9.3.11]{lazarsfeld-positivity_2}. The pair $(R,D)$ can only fail to be klt along the singular locus of the support of $D$, which is finite.

  We will take $\Gamma\in\abs{B+mF}$ to be any reduced divisor, not containing $B$, which passes through all the singularities of $B$, with multiplicity 3 at each nodal singularity, and with an $A_{d}$ singularity at each $A_{d-1}$ singularity of $B$ for $d\geq 3$, with the same preferred tangent direction. The existence of such a $\Gamma$ can be arranged from the realization of $(R,L)$ as the double cover of a Hirzebruch surface, as explained in the next section, detailed in Lemma \ref{lem:gamma}.

  To verify condition \ref{item:recipe-inclusion}, we use the definition of the multiplier ideal $\J(D)$. Take a log resolution $\mu:\widetilde{R}\to R$ of $(R,D)$, and recall that
  $$\J(D)\defeq\mu_* \O_{\widetilde{R}}\left(K_{\widetilde{R}/R} - \lfloor \mu^*D \rfloor \right).$$
  Observe that the log resolution for the $A_{d-1}$ plane curve singularity depends on the parity of $d$. The incidence and multiplicities of the proper transform $\widetilde{C}$ of the singular curve $C=Z(y^2-x^d)$ and the exceptional divisors of the resolution are recorded by the graphs below.
  \[
  \begin{tikzpicture}[dot/.style={circle,fill=black,inner sep=0pt, minimum size=5pt}, scale=0.7]
  \node[dot,label=above:{$2E_1$}] (n1) at (0,0) {};
  \node[dot,label=above:{$4E_2$}] (n2) at (1,0) {};
  \node (n3) at (2,0) {$\cdots$};
  \node[dot,label=right:{$dE_{d/2}$}] (n4) at (3,0) {};
  \node[dot,label=above:{$\wt C_1$}] (n5) at (4,1) {};
  \node[dot,label=below:{$\wt C_2$}] (n6) at (4,-1) {};
  \draw (n1) -- (n2) -- (n3) -- (n4) -- (n5) (n4) -- (n6);
  \end{tikzpicture}
  \quad\quad\quad
  \begin{tikzpicture}[dot/.style={circle,fill=black,inner sep=0pt, minimum size=5pt}, scale=0.7]
  \node[dot,label=above:{$2E_1$}] (n1) at (0,0) {};
  \node[dot,label=above:{$4E_2$}] (n2) at (1,0) {};
  \node (n3) at (2,0) {$\cdots$};
  \node[dot,label=below:{$(d-1)E_{\floor{d/2}}$}] (n4) at (3,0) {};
  \node[dot,label=right:{$2dE_{\floor{d/2}+2}$}] (n5) at (4,0) {};
  \node[dot,label=above:{$\wt C$}] (n6) at (5,1) {};
  \node[dot,label=below:{$dE_{\floor{d/2}+1}$}] (n7) at (5,-1) {};
  \draw (n1) -- (n2) -- (n3) -- (n4) -- (n5) -- (n6) (n5) -- (n7);
  \end{tikzpicture}
\]

  Now $K_{\widetilde{R}/R}= \sum E_i$, and the ideal sheaf $(y,x^k)$ is the pushforward of $\O_{\widetilde{R}}(-kE_k)$. Since pushforward is left exact, it suffices to check that
  \begin{equation} \label{eq:ineq-at-each-sing}
  \begin{aligned}
  &K_{\widetilde{R}/R} - \lfloor \mu^*D \rfloor \leq -(\delta(d)+1) E_{\delta(d)+1}
  & d &\geq 3
  \\
  &K_{\widetilde{R}/R} - \lfloor \mu^*D \rfloor \leq -E_{1} & d &= 2
  .
  \end{aligned}
  \end{equation}
 We arrange this by comparing coefficients at each exceptional divisor $E_i$. Suppose first that $d \geq 3$. For $i\neq \delta(d)+1$, the necessary inequality for $a,b$ becomes
$$1- \lfloor a (2i)+ b(2i) \rfloor \leq  0.$$
 This statement for all $i$ would follow from $a+b>1/2$. For $i=\delta(d)+1$, the necessary inequality for $a,b$ is
  $$1- \lfloor da + (d+1)b \rfloor \leq  -(\delta(d)+1)$$
  $$ \Leftrightarrow \,\, da + (d+1)b \geq \lfloor d/2 \rfloor +2.$$
For $d=3$, this becomes $3a+4b\geq 3$. For $d=4$, this becomes $4a+5b\geq 4$. For $d=5$, this becomes $5a+6b\geq 4$, and so on.

When $d=2$, the case of the node singularity, \eqref{eq:ineq-at-each-sing} yields the weaker inequality $2a+3b\geq 2$. All the desiderata are satisfied when $b$ is sufficiently small, and
\[ 
a = 1-\frac{5}{4}b. \qedhere
\]
\end{proof}

\begin{thm}\label{thm:equisingular}
Let $B\in |L|$ be an irreducible rational bisection curve. Then $B$ has at most one singularity that is worse than nodal. If $B$ has such a singularity, then the space of first order equisingular deformations of $B$ is trivial, and otherwise it is 1-dimensional.
\end{thm}

\begin{proof}
  Let \( \cI \) be as in Lemma~\ref{lem:vanishingtheorem}.
  Consider the following short exact sequence of sheaves on $B$:
  $$0 \to L_B\otimes \cI \to L_B \to \O_B/\cI \to 0.$$
  The long exact sequence on cohomology then reads
  $$0 \to \Hm^0(L_B\otimes \cI) \to \Hm^0(L_B) \to \Hm^0(\O_B/\cI) \to \Hm^1(L_B\otimes \cI).$$
  Recall that $h^0(L_B)=g+1$, and by definition $\O_B/\cI$ has length $g+\ell$, where $\ell$ is the number of worse-than-nodal singularities by \eqref{eq:genus-sum-of-deltas}.  By Lemma \ref{lem:vanishingtheorem}, the $\Hm^1$ term vanishes, so $\ell\leq 1$ by exactness.
  When $\ell=0$, we conclude that $h^0(L_B\otimes \cI)=h^0(L_B\otimes\Jac(B))=1$.
When $\ell=1$, we conclude that $h^0(L_B\otimes \cI)=0$. Now, since the Jacobian ideal $\Jac(B)$ is a subsheaf of $\cI$, the equisingular deformation space is trivial by Proposition \ref{prop:jacobianideal}.
\end{proof}

While Theorem \ref{thm:equisingular} is interesting in its own right, it is useful for our present purposes because it implies a certain transversality statement for Noether-Lefschetz intersections.

\begin{thm}\label{thm:reducedseveri-with-proof}
    The general element of any component of $V(R,L)$ has exactly $g = \frac{1}{2}L^2$ nodal singularities and no other singularities, and $V(R,L)$ is reduced of dimension 1.
\end{thm}
\begin{proof}
    Each component of the Severi variety $V(R,L)$ has dimension $\geq 1$ by Theorem \ref{thm:severi-connected-with-proof}; see also Proposition \ref{expdimseveri}. Assume for the sake of contradiction that a general element $B$ of some component of $V(R,L)$ had a singularity of type $A_d$ where $d\geq 2$. By Theorem \ref{thm:equisingular}, $B$ has no first order equisingular deformations; contradiction! Therefore, $B$ has only $A_1$ singularities, i.e., nodes. The number of nodes is equal to the arithmetic genus, which is $\frac{1}{2}L^2$ by the genus formula. Since the first order equisingular deformations of $B$ are 1-dimensional, $V(R,L)$ has dimension equal to 1, and it is reduced.
\end{proof}

\subsection{Moduli Spaces of $(R,L)$}\label{subsec:moduli_of_rl}
The purpose of this section is to show that most geometric properties of the Severi curve $V(R,L)$ depend only on the value of $L^2 =2g(L)$, and the dichotomy of whether $L$ is \emph{Weierstrass} or \emph{ordinary}, to be explained below.
We will parametrize all pairs $(R,L)$, at least for $R$ smooth with irreducible fibers, by a countable collection of irreducible varieties (or stacks), the only discrete invariants being the aforementioned. A similar analysis is carried out for higher degree multisections in \cite{dejong-friedman}. The moduli stack of rational elliptic surfaces with no additional data is poorly behaved because the generic $R$ has infinite discrete automorphism group.

To explain the dichotomy, let $\eta \to \P^1$ be the generic point and $R_\eta \to \eta$ the pullback, which is an elliptic curve over the function field $K=\C(\P^1)$. The line bundle $L$ pulls back to $L_\eta \in \Pic^2(R_\eta)$. By Riemann-Roch for the curve $R_\eta$, $h^0(L_\eta)=2$ so we have a $K$-morphism
\[|L_\eta|:R_\eta \to \P^1_K.\]
Over the algebraic closure $\overline{K}$, this morphism has 4 ramification points, but they are rarely $K$-points. We briefly study when this is the case.

\begin{defn}
    We say that $L$ is \defword{Weierstrass} if there is a ramification point of $\abs{L_\eta}: R_\eta \to \P^1_K$ defined over $K$; otherwise $L$ is \defword{ordinary}.
\end{defn}
\begin{prop}
    Let $\pi:R \to \P^1$ be a rational elliptic surface with irreducible fibers, and $L$ a bisection line bundle. Then the following conditions are equivalent.
    \begin{enumerate}[(i)]
        \item $L$ is Weierstrass.
        \item The complete linear system $\abs{L}$ contains a divisor of the form $2s+mf$, for $s$ a section of $\pi$ and $f$ a fiber of $\pi$.
        \item The projection $\Pi(L)\in\ZF^\perp(R)\cong\E8(-1)$ is 2-divisible, i.e., $\Pi(L)=2u$ for some $u\in\ZF^\perp(R)$ for any choice of zero section as in \S\ref{subsec:elliptic-surfaces}.
    \end{enumerate}
\end{prop}
\begin{proof}
    (i)$\Leftrightarrow$(ii): If the double cover $|L_\eta|:R_\eta \to \P^1_\eta$ has a ramification point defined over $K$, then taking its Zariski closure gives a section $s$ on $R$. The associated branch point in $\P^1_K$ is also defined over $K$, so the pullback of divisors gives a rational equivalence $L_\eta \sim 2s$ on $R_\eta$. Since any divisor on $R$ supported on fibers of $\pi$ is a sum of fibers, we obtain that $L \sim 2s+mf$. Conversely, if $|L|$ contains a divisor $2s+mf$, then $L_\eta$ contains the divisor $2s_\eta$, and $s_\eta$ is a $K$-point of $R_\eta$ where $|L_\eta|$ ramifies.

    (ii)$\Leftrightarrow$(iii): If $|L|$ contains the divisor $2s+mf$, then since $\Pi(f)=0$, 
    $$\Pi(L) = 2\Pi(s)\in \ZF^\perp(R).$$
    Conversely, if $\Pi(L)=2u$, then as $\Pi$ is an isomorphism from the Mordell-Weil group to $\ZF^\perp(R)$, there is a unique section $s$ such that $\Pi(s)=u$. By Proposition \ref{prop:bisec-proj-formula}, $L^2 - \Pi(L)^2\equiv 0$ (mod 4), so there is a unique $m\in \Z$ such that $(2s+mf)^2 = L^2$. We conclude using Lemma \ref{lem:g-bisections-inject}.
\end{proof}

Let $\pi\colon R\to\P^1$ be a rational elliptic surface with irreducible fibers and $L$ either an ample line bundle (if $L^2>0$) or a nef line bundle (if $L^2=0$), and $L\cdot F=2$. These conditions rule out bisection line bundles of the form $\O(s_1+s_2)$ for $s_1\cap s_2=\emptyset$ and $\O(2s_1+F)$, which have an empty Severi variety.

\begin{prop}
    The restriction map $\Hm^0(R,L) \to \Hm^0(f,L_f)$ is surjective for any fiber $f$ of the anticanonical fibration $\pi:R \to \P^1$.
\end{prop}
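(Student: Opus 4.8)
The plan is to show that $H^1(R, L \otimes \O(-f)) = 0$, which by the restriction sequence
\[
0 \to L \otimes \O(-f) \to L \to L_f \to 0
\]
immediately yields surjectivity of $H^0(R,L) \to H^0(R,L_f)$. Since $f$ is a fiber, $\O(-f) = \O(-F)$ as a divisor class, and $K_R = \O(-F)$; hence $L \otimes \O(-f) \cong K_R \otimes L$. So it suffices to prove $H^1(R, K_R \otimes L) = 0$, which is exactly the conclusion of the Kodaira–Nakano (or Kawamata–Viehweg) vanishing theorem once we know $L$ is big and nef. When $L^2 > 0$ we have assumed $L$ ample, so this is Kodaira vanishing outright. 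When $L^2 = 0$ we have assumed $L$ nef with $L \cdot F = 2 > 0$, so $L$ is nef and $L$ is not numerically trivial; to invoke Kawamata–Viehweg one checks $L$ is big, but with $L^2 = 0$ it is not big, so a little more care is needed in that boundary case.

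For the case $L^2 = 0$, I would argue directly. Here $L$ is nef with $L \cdot F = 2$ and $L \cdot (-K_R) = L \cdot F = 2 > 0$, and by Equation~\eqref{eq:rr} we have $r(L) = 1$, i.e. $h^0(L) = 2$ and $g(L) = 0$, so $|L| \simeq \P^1$ is a base-point-free pencil of rational bisections. Then $L \otimes \O(-f) = K_R \otimes L$ has $\chi(K_R \otimes L) = \chi(\O_R) + \tfrac12((K_R+L)^2 - (K_R+L)\cdot K_R) = 1 + \tfrac12(L^2 + L\cdot K_R) = 1 + g(L) - 1 = 0$ using $\chi(\O_R) = 1$ and $g(L)=0$. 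Also $h^0(K_R \otimes L) = 0$: any effective divisor in $|K_R + L|$ would satisfy $(K_R+L)\cdot F = L\cdot F - 0 = 2$ but $(K_R + L)^2 = L^2 + 2 L\cdot K_R = -2 < 0$, and an effective divisor $D$ with $D \cdot F = 2$ that is not supported on fibers must be an irreducible bisection or a sum $s_1 + s_2 + (\text{fibers})$ or $2s + (\text{fibers})$, all of which one checks have nonnegative self-intersection — contradiction; alternatively $h^0(K_R \otimes L) = h^2(\O_R(-L)) = h^0(\O_R(-L))^\vee \oplus \ldots$, but more simply $h^2(K_R \otimes L) = h^0(\O_R(-L)) = 0$ since $-L$ is not effective ($L$ nef and nonzero). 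Combining $\chi = 0$, $h^0 = 0$, $h^2 = 0$ forces $h^1(K_R \otimes L) = 0$, as desired.

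The main obstacle is thus not the ample case (immediate from Kodaira vanishing) but the degenerate case $L^2 = 0$, where the vanishing theorems do not apply off the shelf and one must rule out effective divisors in $|K_R + L|$ by the self-intersection bookkeeping sketched above; this is where the standing hypotheses — irreducible fibers and the exclusion of $\O(s_1+s_2)$ with disjoint sections and of $\O(2s+F)$ — are used. A cleaner alternative I would also consider is to observe that in the $L^2 = 0$ case $|L|$ exhibits $R$ as a conic bundle over $\P^1$ and deduce the restriction surjectivity directly from the structure of that fibration, or simply to note that for $g(L) = 0$ the proposition is used only in ways where $V(R,L) = |L| \simeq \P^1$ makes the subsequent arguments trivial anyway, so the substantive content is the ample case.
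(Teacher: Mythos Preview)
Your treatment of the ample case matches the paper exactly: both use that $L(-f)\cong K_R\otimes L$ and invoke Kodaira vanishing.

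For the boundary case $L^2=0$, the paper takes a different route. Rather than proving $H^1(K_R\otimes L)=0$ directly, it applies Kawamata--Viehweg vanishing to the nef and big class $L+F$ (note $(L+F)^2=4$) to obtain $H^1(R,L)=0$, whence $h^0(L)=2$ by Riemann--Roch. It then argues that the pencil $|L|$ is basepoint free, so its restriction to any fiber $f$ is again a basepoint free pencil, forcing $H^0(R,L)\to H^0(f,L_f)$ to be an isomorphism. This sidesteps any case analysis of effective divisors.

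Your direct Euler-characteristic argument is also viable, but as written it has a gap. First, a minor arithmetic slip: $(K_R+L)^2=L^2+2L\cdot K_R+K_R^2=0-4+0=-4$, not $-2$. More importantly, your claim that every effective $D$ with $D\cdot F=2$ of the listed shapes has nonnegative self-intersection is false: the divisor $D=2s$ has $D^2=-4$, which is exactly the value you need to exclude. What actually rules this case out is that $D=2s$ would force $L=2s+F$, and $(2s+F)\cdot s=-1<0$, contradicting the standing nefness hypothesis. Once you insert this one observation, your argument goes through; but the blanket ``nonnegative self-intersection'' assertion should be replaced by this case check.
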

\begin{proof}
    If $L$ is ample, then the claim follows directly from Kodaira vanishing. If $L^2=0$ and $L$ is nef, then $L+F$ is nef and big, so by Kawamata-Viehweg vanishing and Riemann-Roch, we have that $h^0(L)=2$. Hence, $|L|\simeq \P^1$ and the corresponding pencil of bisection curves is basepoint free:  there can be no fixed component since it would have to be a section curve, and there are no isolated basepoints because $L^2=0$. In particular, $|L_f|$ is also a basepoint free pencil, so the restriction $\Hm^0(R,L) \to \Hm^0(f,L_f)$ is an isomorphism.
\end{proof}
By Grauert's Theorem, $\pi_*(L)$ is a vector bundle on $\P^1$ of rank 2, and we have a double cover $\psi_L:R \to \P(\pi_*(L))$ given by the relative complete linear system.
\begin{prop} \label{prop:res-covers-hirzebruch}
The Hirzebruch surface $\P(\pi_*(L))$ is either $\F_0$, $\F_1$, or $\F_2$. The first two cases occur for $L$ ordinary (and $g$ even, odd, respectively), and the third case occurs for $L$ Weierstrass ($g$ is always even in that case).
\end{prop}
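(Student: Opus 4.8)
The plan is to bypass the double cover $\psi_L$ and argue directly with the rank-two bundle $\pi_*L$. Writing $\pi_*L\cong\O_{\P^1}(a)\oplus\O_{\P^1}(b)$ with $a\le b$, we have $\P(\pi_*L)\cong\F_{b-a}$, so everything reduces to showing $b-a\le 2$ and that $b-a=2$ holds precisely when $L$ is Weierstrass; the split between $b-a=0$ and $b-a=1$ is then forced by $b-a\equiv a+b\pmod 2$.

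First I would compute $a+b$. Since $L_f$ is a degree-two line bundle on every fiber (smooth or nodal) it has $h^1=0$, so $R^1\pi_*L=0$, $\pi_*L$ is locally free of rank two, and $\chi(\P^1,\pi_*L)=\chi(R,L)$; Riemann--Roch on $R$ with $K_R=-F$, $L^2=2g$, $L\cdot F=2$ then gives $\chi(R,L)=g+2$, hence $a+b=\deg\pi_*L=g$. Next, to bound $b-a$: because $a\le b$, $h^0(R,L-bF)=h^0(\P^1,(\pi_*L)(-b))\ge1$, so there is a nonzero effective divisor $D\sim L-bF$ with $D\cdot F=2$ and $D^2=L^2-4b=2g-4b$. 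I would then prove the elementary fact that any effective divisor $D$ on $R$ with $D\cdot F=2$ has $D^2\ge -4$: splitting off the part of $D$ supported on whole (irreducible) fibers, the remainder is an irreducible bisection, or a sum $s_1+s_2$ of two distinct sections, or $2s$ for a single section $s$, and adjunction together with $s^2=-1$ gives $D^2\ge0$, $\ge-2$, $=-4$ respectively (adding fibers only raises $D^2$). Hence $2g-4b\ge -4$, i.e.\ $b\le(g+2)/2$, so $b-a=2b-g\le 2$.

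The remaining point is the Weierstrass characterization. If $L$ is Weierstrass, then by definition $|L|$ contains a divisor $2s+mF$; from $2g=L^2=(2s+mF)^2=-4+4m$ we get $m=g/2+1$, and I would compute $\pi_*\O_R(2s)$ by pushing forward $0\to\O_R(s)\to\O_R(2s)\to\O_R(2s)|_s\to0$: using $s^2=-1$ (so $\O_R(2s)|_s\cong\O_{\P^1}(-2)$), $\pi_*\O_R(s)\cong\O_{\P^1}$ (a line bundle of degree $\chi(R,\O_R(s))-1=0$), and $R^1\pi_*\O_R(s)=0$, this becomes $0\to\O\to\pi_*\O_R(2s)\to\O(-2)\to0$, which splits since $\mathrm{Ext}^1_{\P^1}(\O(-2),\O)=0$. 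Thus $\pi_*L\cong\pi_*\O_R(2s)\otimes\O(g/2+1)\cong\O(g/2-1)\oplus\O(g/2+1)$, giving $b-a=2$. For the converse, if $b-a=2$ then $\{a,b\}=\{g/2-1,g/2+1\}$, so $h^0(R,L-(g/2+1)F)=1$ and there is a unique effective $D\sim L-(g/2+1)F$ with $D\cdot F=2$ and $D^2=2g-4(g/2+1)=-4$; by the trichotomy above, the value $D^2=-4$ is attained only by $D=2s$, so $L\cong\O(2s+(g/2+1)F)$ is Weierstrass. Finally $b-a\equiv a+b=g\pmod2$ forces $g$ odd in the case $b-a=1$ and $g$ even in the cases $b-a\in\{0,2\}$, while Weierstrass bundles always have $g=2m-2$ even, completing the dictionary.

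I expect the only delicate step to be the intersection-theoretic classification of effective divisors of fiber-degree two on $R$ and the verification that the extreme value $D^2=-4$ pins down $D=2s$; everything else is bookkeeping with Euler characteristics and splitting types of vector bundles on $\P^1$.
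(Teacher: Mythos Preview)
Your argument is correct and takes a genuinely different route from the paper's proof.

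The paper works on the target side of the double cover $\psi_L\colon R\to\F_n$: it applies Riemann--Hurwitz to compute the branch class as $4\zeta+(2n+2)h$, observes that for $n\ge3$ this linear system has $2\zeta$ in its base locus (forcing a non-normal double cover), and then reads off the parity relation $g\equiv n\pmod2$ and the Weierstrass criterion by determining the section class $M$ on $\F_n$ with $\psi_L^*M=L$.

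You instead work on the source side, analyzing the splitting type of $\pi_*L$ directly: Riemann--Roch gives $a+b=g$, and the inequality $b-a\le2$ comes from your clean classification of effective divisors of fiber degree two on a rational elliptic surface with irreducible fibers, which also pins down the extremal case $D^2=-4$ as $D=2s$. The explicit computation of $\pi_*\O_R(2s)$ via the short exact sequence $0\to\O_R(s)\to\O_R(2s)\to\O_s(2s)\to0$ is a nice touch and makes the Weierstrass $\Leftrightarrow$ $\F_2$ equivalence transparent.

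Your approach is more self-contained and avoids any case analysis on Hirzebruch surfaces; the paper's approach has the advantage that it simultaneously identifies the branch curve class $4\zeta+(2n+2)h$, which is exactly what is needed in the subsequent construction of the moduli spaces $\MRES_g^\star$ as quotients of loci in $\P\Hm^0(\F_n,4\zeta+(2n+2)h)$.
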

\begin{proof}
Let $n\geq 0$. Recall that each Hirzebruch surface $p \colon \F_n = \P(\O\oplus \O(n)) \to \P^1$ has Picard rank 2, with generators $h = p^* c_1(\O_{\P^1}(1))$ and $\zeta = c_1(\O_{\F_n}(1))$ the relative hyperplane class. These also generate the effective cone, and their intersection pairing is:
\[
h^2 =0 ,\quad \zeta^2 = -n,\quad h\cdot \zeta = 1.
\]
The cone of movable curves (dual to the effective cone in this case) is given by classes $a\zeta+bh$ such that $b/a>n$, $a>0$. If $\psi_L:R\to \F_n$ is the double cover, then by the Riemann-Hurwitz formula,
$K_R = \psi_L^*K_{\F_n} + \mathrm{Ram} = \psi_L^* K_{\F_n} + \frac{1}{2}\psi_L^*\Br$.
Since $-K_R = \psi_L^* h$, we deduce that $\Br = 4\zeta+(2n+2)h$. This is a divisor with slope $b/a=(n+1)/2$. For $n\geq 2$, the linear system of $\Br$ on $\F_n$ must have a fixed part. For $n=2$, the fixed part is $\zeta$, and for $n\geq 3$, the fixed part is $2\zeta$. This eliminates the possibility $n\geq 3$, because a double cover of $\F_n$ with branch divisor containing $2\zeta$ is a non-normal surface.

Next, we determine the divisor class $M$ on $\F_n$ such that $\psi_L^*(M)=L$. It must be a section class, since $L$ is a bisection of $R$, and $2g = L^2 = 2M^2$, so $M^2=g$. If we set $M = \zeta+kh$, then solving this equation on $\F_n$ gives
$k = \frac{1}{2}(g+n).$
Since $k$ must be an integer, $g$ and $n$ must have the same parity. Recall that a line bundle $L$ on $R$ is Weierstrass if and only if $|L|$ contains a member of the form $2s+\frac{1}{2}(g+2)f$, where $s$ is a section of $\pi:R\to \P^1$. This is possible iff $g$ is even and $n=2$.
\end{proof}
The above proposition leads us to two important constructions. The first is of the curve $\Gamma\subset R$ as required in Lemma \ref{lem:vanishingtheorem}, and the second is of the moduli stacks that parametrize families of pairs $(R,L)$, equipped with their universal families of Severi curves.
\begin{lem}\label{lem:gamma}
    Given an irreducible rational bisection curve $B\in |L|$ on $R$, there exists a reduced divisor $\Gamma\subset R$ in the class $B+mF$ for some $m\gg 0$, such that $\Gamma$ passes through each node point of $B$ with multiplicity 3, and $\Gamma$ has an $A_d$ singularity at each $A_{d-1}$ singularity of $B$, for $d\geq 3$, with the same preferred direction.
\end{lem}
\begin{proof}
The rational bisection $B$ is equal to $\psi_L^*(C)$, where $C$ is a smooth curve on $\F_n$ in class $\zeta+kh$. Each $A_{d-1}$ singularity of $B$ lies over a point of $C\cap \Br$ with intersection multiplicity $d$, and both curves are smooth. To construct $\Gamma$, we take $\psi_L^*(C')+\sum f_i$, where $f_i$ are fiber curves passing through the nodes of $B$, and $C'$ is a smooth curve on $\F_n$ in class $\zeta+k'h$ for $k'\gg 0$. To satisfy the desired property, we ask that $C'$ meet $B$ at each point of $C\cap \Br$ with intersection multiplicity $d+1$. All this can be arranged because $C'$ and $C$ are birational to the graphs of polynomial morphisms $\mathbb A^1 \to \mathbb A^1$. In the case of $C'$, the degree of the morphism is arbitrarily large, so any local intersection multiplicity with $\Br$ can be arranged by Lagrange interpolation.
\end{proof}

We will now describe a countable collection of 8-dimensional reduced and irreducible moduli stacks $\MRES_g^{\mathrm{ord}}$ and $\MRES_g^{\mathrm{Wei}}$, where the latter only exists for $g>0$ even, which represent the moduli functors for pairs $(R,L)$ with the desired discrete invariants fxed. We begin with an informal discussion of quotient stacks.

The automorphism group of $\F_n$ for $n>0$ can be described in terms of base and fiber:
$$\Aut(\F_n) \simeq \PGL(\O_{\P^1}\oplus \O_{\P^1}(n)) \rtimes \PGL(2),$$
and hence has dimension $n+5$. For $n=0$, the automorphism group has two connected components of dimension 6, cosets of $\PGL(2)\times \PGL(2)$ under the swapping of the two factors.

For $g$ even, in the ordinary and Weierstrass cases respectively, we will use
\begin{align*}
    |4\zeta+2h|_{\sm} &\subset \P \Hm^0(\F_0,4\zeta+2h),\\
    |4\zeta+6h|_{\sm} &\subset \P \Hm^0(\F_2,4\zeta+6h) \simeq \P \Hm^0(\F_2,3\zeta+6h),
\end{align*}
to denote the open loci of smooth curves in each complete linear system.
The actions of $\Aut(\F_0)$ and $\Aut(\F_2)$ on these respective schemes have finite stabilizers, since smooth curves of genus $3$, resp. genus 4, have finite automorphism groups. The quotient stacks
\begin{align*}
    [|4\zeta+2h|_{\sm} / \Aut(\F_0)],\\
    [|4\zeta+6h|_{\sm}  / \Aut(\F_2)],
\end{align*}
are Deligne-Mumford stacks, by \cite{alper-hall-rydh-luna}, and they have the same coarse spaces as $\MRES_g^{\ord}$ and $\MRES_g^{\Wei}$, respectively, as we will see. They are reduced and irreducible of dimension 8.

For $g$ odd, we similarly use $|4\zeta+4h|_{\sm}\subset\P \Hm^0(\F_1,4\zeta+4h)$ to denote the open locus of smooth (genus 3) curves in the complete linear system, and then observe that
$$ [|4\zeta+4h|_{\sm} / \Aut(\F_1)]$$
has the same coarse space as $\MRES_g^{\ord}$, and it is again integral of dimension 8.
\begin{remark}
For $n>0$, the automorphism group $\Aut(\F_n)$ is non-reductive because it contains unipotent matrices. We do not claim that the coarse spaces of these quotient stacks are quasi-projective. Nonetheless, they are finite type algebraic spaces.
\end{remark}

By Proposition \ref{prop:res-covers-hirzebruch}, every \( (R,L) \) with irreducible fibers arises as the double cover of some Hirzebruch surface \( \F_n \) branched along a smooth divisor of class \( \Brn \defeq \O_{\F_n}(4) \otimes p^* \O_{\P^1}(2n + 2)\), with \( L \) the pullback of a line bundle \( M \) on \( \F_n \), where \( n \), \( \Brn \), and \( M \) only depend on \( g(L) \), and whether \( L \) is ordinary or Weierstrass.
This \emph{almost} allows us to conclude that the moduli stacks for pairs $(R,L)$ are simply the quotient stacks \( [\abs{\Brn}_\sm / \Aut(\F_n)]\) mentioned above.

This does not quite work because the universal curve over \( \abs{\Brn}_\sm \) is a divisor on \( \abs{\Brn}_\sm \times \F_n \) whose class is not 2-divisible in the Picard group.
Hence, there is no double cover branched along it, and thus there is no universal family of \( (R,L) \) over \( \abs{\Brn}_\sm \).
This is solved by first passing to a \emph{root gerbe}; see \cite[Example 3.9.21]{alper} for generalities on root gerbes.
\begin{defn}
  For \( \star \in \set{\ord,\Wei} \), we let \( \wt{\MRES}_g^\star \) be the root gerbe of \( \abs{\Brn}_\sm \) with respect to the line bundle \( \O_{\abs{\Brn}_\sm}(1) \), where \( g \) is required to be even when \( \star = \Wei \), and where \( n\in \{0,1,2\} \) is determined as in Proposition~\ref{prop:res-covers-hirzebruch}: \( n = 2 \) if \( \star = \Wei \), and if \( \star = \ord \) then \( n \) is \( 0 \) or \( 1 \) when \( g \) is even or odd, respectively.
\end{defn}

That is, a morphism \( T \to \wt{\MRES}_g^\star \) from a test scheme \( T \) is a triple \( (f,N,\theta) \), where \( f \) is a morphism \( f \colon T \to \abs{\Brn}_\sm \), \( N \) a line bundle on \( T \), and \( \theta \) an isomorphism \( N^{\otimes 2} \toi f^* \O(1) \).

\begin{defn} Let \( \MRES_g^\star \) be the stack of pairs \( (R,L) \) with \( R \) a smooth rational elliptic surface -- \emph{without a specified section} -- and \( L \) a genus \( g \) bisection line bundle on \( R \) of type \( \star \).
\end{defn}
That is, a morphism \( T \to \MRES_g^{\star} \) from a test scheme \( T \) is by definition a pair \( (p \colon \cR \to T, \mathcal{L}) \) consisting of a proper smooth family of rational elliptic surfaces, without a specified section, over \( T \) and a section \( \mathcal{L} \) of the relative Picard scheme \( \Pic_{\cR/T} \to T \),  whose restriction to \( \Pic(R_0) \) for each surface \( R_0 \) in \( \cR \) is a genus \( g \) bisection class of type \( \star \).
This pseudo-functor defines a stack over \( \Sch_{\C} \) in an evident manner.

\begin{prop}
There is a natural morphism \( q \colon \wt{\MRES}_g^\star \to \MRES_g^\star \).
\end{prop}
\begin{proof}Recall that the universal property of \( \abs{\Brn}_\sm \) implies that for any scheme $T$, morphisms \( f \colon T \to \abs{\Brn}_\sm  \) are in bijection with effective Cartier divisors \( D \subset T \times \F_n \) whose restriction to each fibre \( \set{t_0} \times \F_n \) is of class \( \Brn \).
Moreover, under this bijection, we have
$$ \O(D) \cong f^* \O(1) \boxtimes \Brn,
$$
and
$$f^* \O(1) \cong N_D \defeq (\pr_1)_*(\O(D) \otimes \pr_{2}^*(\Brn \I)).$$
Hence, a morphism \( T \to \wt{\MRES}_g^\star \) can be regarded as a triple \( (D,N,\theta) \) with \( D \subset T \times \F_n \) a divisor as above, \( N \) a line bundle on \( T \), and an isomorphism \( \theta \colon N^{\otimes 2} \to N_D \).

The line bundle \( \Brn = \O_{\F_n}(4) \otimes p^* \O_{\P^1}(2n + 2) \) has a square root 
\[ \Brn^{1/2} \defeq \O_{\F_n}(2) \otimes p^* \O_{\P^1}(n + 1).\]
Thus, given a triple \( (D,N,\theta) \) as above, since \( \O(D) \cong N_D \boxtimes \Brn \), the square root \( \theta \colon N^{\otimes 2} \to N_D \) gives us a square root \( N \boxtimes \Brn^{1/2} \) of \( \O(D) \).
We recall the cyclic cover construction from \cite[Proposition~4.1.6]{lazarsfeld-positivity_1} that produces from this data a double cover of \( T \times \F_n \) branched over \( D \), which we denote by \( \psi_{D,\theta} \colon \cR_{D,\theta} \to T \times \F_n \).

Now, \( \cR_{D,\theta} \) is a smooth family of rational elliptic surfaces over \( T \).
Moreover, as in the proof of Proposition~\ref{prop:res-covers-hirzebruch} above, the pullback \( \psi_{D,\theta}^* \pr_2^* \O_{\F_n}(\zeta+\frac 12 (g + n)h) \) is a line bundle on \( \cR_{D,\theta} \) of genus \( g \) and type \( \star \), which gives a class in \( \Pic(\cR_{D,\theta}) \) and hence a section \( \cL_{D,\theta} \) of \( \Pic_{\cR_{D,\theta}/T} \). We define
$$q \colon \wt{\MRES}_g^\star \to \MRES_g^\star$$
on objects by taking a morphism from a test scheme \( T \to \wt{\MRES}_g^\star \) given by a triple \( (D,N,\theta) \) to the morphism \( T \to \MRES_g^\star \) given by the pair \( (\cR_{D,\theta},\cL_{D,\theta}) \).

The cyclic cover construction is functorial in \( N \) in the sense that, given \( \theta' \colon (N')^{\otimes 2} \toi N_D \), any isomorphism \( \alpha \colon N \to N' \) with \( \theta' \alpha^{\otimes 2} = \theta \colon N^{\otimes 2} \to N_D \) induces an isomorphism \( \cR_{D,\theta} \toi \cR_{D',\theta'} \) over \( T \times \F_n \), which thus takes \( \cL_{D,\theta} \) to \( \cL_{D',\theta'} \).
This shows that a 2-cell between given morphisms \( T \rightrightarrows \wt{\MRES}_g^\star \) induces an isomorphism between their images under \( q \).
One routinely verifies that this prescription is compatible with composition and with base change along morphisms \( T' \to T \) -- i.e., that $q$ defines a morphism of stacks.
\end{proof}
Recall the action of \( \Aut(\F_n) \) on \( \abs{\Brn}_\sm \), and denote by $\overline{q}$ the stack quotient morphism.
\begin{prop}
  There is up to isomorphism a unique morphism \( \Phi \) making the square
  \begin{equation}\label{eq:gerbe-square}
    \begin{tikzcd}
      \wt{\MRES}_g^\star \ar[r, "u"] \ar[d, "q"] &
      \abs{\Brn}_\sm \ar[d, "\overline{q}"] \\
      \MRES_g^\star \ar[r, "\Phi"] &
      \mathrm{[}\abs{\Brn}_\sm / \Aut(\F_n)\mathrm{]}
    \end{tikzcd}
  \end{equation}
  2-commute, and the resulting square is 2-cartesian.
\end{prop}
\begin{proof}
To define \( \Phi \), we need to assign to each morphism \( T \to \MRES_g^{\star} \) from a test scheme \( T \) a morphism \( T \to \mathrm{[}\abs{\Br_n}_\sm / \Aut(\F_n)\mathrm{]} \).
A morphism \( T \to \MRES_g^\star \) is by definition a pair \( (\cR,\cL) \).
We may assume that the family of Hirzebruch surfaces associated to \( \cR \) is trivial, i.e., isomorphic to \( T \times \F_n \), since every \( (\cR,\cL) \) is locally of this form, so by the descent axiom for objects of a stack, it suffices to define \( \Phi \) on test morphisms of this form.
This assumption is precisely that \( (\cR,\cL) \) is in the image of \( q \), i.e., that it is of the form \( (\cR_{D,\theta},\cL_{D,\theta}) \) for some \( D \subset T \times \F_n \) and some \( \theta \colon N^{\otimes 2} \toi N_D \).
The commutativity of the square then forces us take \( \Phi(\cR,\cL) \) to be \( \overline{q}(u(D,N,\theta)) \).
Similarly, the definition of \( \Phi \) on a 2-cell between morphisms \( T \toto \MRES_g^\star \) is determined by the commutativity of the square, and a straightforward check reveals that the resulting \( \Phi \) is a morphism of stacks.

Let us see that the resulting square is \( 2 \)-cartesian.
Given \( t \colon T \to \MRES_g^\star \) such that \( \Phi \circ t \) lifts to \( \abs{\Brn}_\sm \), it follows that the pair \( (\cR,\cL) \) is such that the associated family of Hirzebruch surfaces is trivial, and hence that \( t \) lifts to \( \wt{\MRES}_g^\star \).
Similarly, given \( t_1,t_2 \colon \wt{\MRES_g^\star} \) and 2-cells \( q \circ t_1 \toi q \circ t_2 \) and \( u \circ t_1 \toi u \circ t_2 \) inducing the same 2-cell between \( T \toto \abs{\Br_n}_\sm / \Aut(\F_n) \), we need to show that there is a unique 2-cell \( t_1 \toi t_2 \) giving rise to these.
But this is just the fact that, given \( (D,M,\theta) \) and \( (D,M',\theta') \), each isomorphism \( \cR_{D,\theta} \toi \cR_{D,\theta'} \) is induced by a unique \( M \toi M' \).
\end{proof}

\begin{cor} \label{cor:mres-is-smooth}
  All four stacks appearing in \eqref{eq:gerbe-square} are smooth, integral Deligne-Mumford stacks, and \( \MRES_g^\star \) is 8-dimensional.
\end{cor}

\begin{proof}
  As an open subscheme of projective space, \( \abs{\Brn}_\sm \) is smooth and integral.
  The quotient morphism \( \overline{q} \) is smooth and surjective, so the quotient stack \( [\abs{\Brn}_\sm / \Aut(\F_n)] \) is smooth and integral by \cite[04YH]{stacks-project}.
  The desired properties for $\wt{\MRES_g}^\star$ and $\MRES_g^\star$, follow from the fact that $u$ is a $\mu_2$-gerbe, and so $\Phi$ is also a $\mu_2$-gerbe by \cite[06QF]{stacks-project}. The dimension of \( \MRES_g^\star \) can be computed from the dimension of the quotient stack:
  \[ 8 = \dim|\Brn| - \dim \Aut(\F_n).  \]
\end{proof}

\begin{cor}\label{cor:severi-invariance}
    The arithmetic genus of $V(R,L)$ and the geometric genus of $V(R,L)$ for general $(R,L)$, depends only on $g(L)$, and whether $L$ is ordinary or Weierstrass.
\end{cor}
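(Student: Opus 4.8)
The plan is to deduce the corollary entirely from the family constructions of Proposition~\ref{prop:universal-severi} together with the standard principle that the relevant invariants are constructible and locally constant in flat families. First I would fix $g$ and $\star\in\set{\ord,\Wei}$ and recall the setup: we have the irreducible base $\mathcal U^\star_g$, the smooth family of surfaces $\mathcal R^\star_g\to\mathcal U^\star_g$ with line bundle $\mathcal L^\star_g$, and the flat family of Severi varieties $\mathcal V^\star_g\to\mathcal U^\star_g$, whose fiber over a point classifying $(R,L)$ is $V(R,L)$ by part (ii) of that proposition. Since every pair $(R,L)$ with irreducible fibers, $L$ ample (or nef if $g=0$) of genus $g$, and the prescribed Weierstrass/ordinary type occurs as a fiber, it suffices to show each of the three quantities in the statement is constant on $\mathcal U^\star_g$.

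For the arithmetic genus, I would use that $\mathcal V^\star_g\to\mathcal U^\star_g$ is flat with fibers of pure dimension $1$, so the Hilbert polynomial of the fibers is locally constant; hence $\chi(\O_{V(R,L)})$ is constant on each connected component, and in particular on the irreducible $\mathcal U^\star_g$. Combined with Theorem~\ref{thm:severi-connected} (connectedness of each fiber $V(R,L)$), this pins down the arithmetic genus $p_a = 1-\chi(\O_{V(R,L)})$ as a function of $g$ and the type alone. For the geometric genus and the degree of ${f_L}_*[\wt V(L)]$, these are not locally constant on all of $\mathcal U^\star_g$ but only on a dense open subset, which is why the statement says "for general $R$": one first shrinks $\mathcal U^\star_g$ to a dense open $\mathcal U'\subset\mathcal U^\star_g$ over which the normalization $\wt{\mathcal V}^\star_g\to\mathcal V^\star_g$ commutes with base change (possible after removing a proper closed subset, since normalization is generically fiberwise) and over which the relative branch morphism $f_{\mathcal L}\colon\wt{\mathcal V}^\star_g\dashrightarrow\Sym^2_{\P^1}$ is defined and flat with irreducible fibers; then the geometric genus of $V(R,L)$ equals the arithmetic genus of the smooth fiber $\wt V(L)$, again locally constant, and the degree of the pushforward cycle ${f_L}_*[\wt V(L)]$ in $\P^2=\Sym^2(\P^1)$ is the degree of a member of a flat family of $1$-cycles, hence locally constant on $\mathcal U'$. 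Since $\mathcal U^\star_g$ is irreducible, $\mathcal U'$ is connected, and all three invariants are therefore determined by $(g,\star)$.

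The main obstacle is the one that forces the "for general $R$" hypothesis on the last two invariants: establishing that there is a single dense open $\mathcal U'\subset\mathcal U^\star_g$ over which simultaneously (a) forming the normalization of the fibers of $\mathcal V^\star_g$ commutes with base change, and (b) the rational map $f_{\mathcal L}$ extends to an honest flat morphism. Point (a) is subtle because a priori the generic fiber of $\mathcal V^\star_g$ could be non-reduced or have worse-than-expected singularities; here Theorem~\ref{thm:reducedseveri} does the essential work, guaranteeing that a general $B\in V(R,L)$ has exactly $g$ nodes and nothing worse, so the general Severi curve is reduced and its singularities are controlled — this is precisely the "interesting for our present purposes" remark following that theorem. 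For (b), one uses that the branch divisor of a degree-two map $\P^1\to\P^1$ varies algebraically, so $f_{\mathcal L}$ is a morphism away from the locus of bisections with non-reduced branch (i.e., colliding branch points), which is a proper closed condition; removing it gives the extension over a dense open. Once both open conditions are imposed — their intersection is still dense since $\mathcal U^\star_g$ is irreducible — the constancy arguments above apply verbatim, and the corollary follows.
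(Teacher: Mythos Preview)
Your proposal is correct and follows exactly the approach the paper has in mind: the corollary is stated immediately after Proposition~\ref{prop:universal-severi} with no proof beyond ``We now conclude,'' so you are supplying the standard flatness/irreducibility argument that the authors leave implicit. Two small remarks: the invocation of Theorem~\ref{thm:severi-connected} for the arithmetic genus is unnecessary, since $p_a=1-\chi(\O)$ is constant in a flat proper family regardless of connectedness; and your point (b) about extending $f_{\mathcal L}$ is slightly misstated --- the map $f_L$ is already a morphism on all of $\wt V(L)$ (it extends from the dense open of irreducible bisections because the source is a smooth curve), so the only genuine issue is (a), making normalization commute with base change over a dense open, for which generic reducedness via Theorem~\ref{thm:reducedseveri} is indeed the right input.
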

\begin{proof}
  This follows since all such \( V(R,L) \) occur in a single family $\mathcal{V}^{\star}_g \to \MRES^{\star}_g$ over the integral stack \( \MRES_g^\star \).
  Indeed, we have universal family of pairs \( (R,L) \) over \( \MRES_g^\star \), and by \cite{harris-diaz-ideals}, Severi varieties can be constructed in families over a reduced base.
\end{proof}

\section{Noether-Lefschetz Theory}
For the eventual genus bound on $V(R,L)$, we now introduce the machinery of (quasi)-modular forms valued in the cohomology of moduli spaces of polarized K3 surfaces. In \S\S\ref{subsec:lattice-polarized}-\ref{subsec:nl-theory}, we review the general theory, and in \S\ref{subsec:families-of-k3s}, we apply it to a specific family of K3 surfaces.

\subsection{Lattice-polarized K3 surfaces}\label{subsec:lattice-polarized}
Let $\Lambda_{\K3} = U^{\oplus 3}\oplus \E8(-1)^{\oplus 2}$ be the middle cohomology lattice of an arbitrary K3 surface. For our purposes, we will consider only special K3 surfaces, which admit an elliptic fibration structure, or even further, come from a rational elliptic surface via base change. Let $M$ be a lattice of signature $(1,l)$ equipped with an isometric embedding into $\Lambda_{\K3}$ and a {\it very irrational} vector\footnote{A vector $v\in M\otimes \R$ is very irrational if it is not contained in $M'\otimes \R$ for any sublattice $M'\subset M$ of smaller rank.} $v\in M\otimes \R$ with $v^2>0$.

\begin{defn}
An $M$-quasi-polarized K3 surface is a pair $(S,\iota)$, where $S$ is a smooth K3 surface, and $\iota:M \hookrightarrow \NS(S)$ is a primitive isometric embedding such that $\iota(v)$ is a nef and big $\R$-divisor class.
\end{defn}
This definition differs slightly from the original one found in \cite{dolgachev-lattice-polarized}; it appears in \cite{alexeev-engel}, where the authors corrected the original definition to make the main theorems of \cite{dolgachev-lattice-polarized} true.
There is a period map that associates to an $M$-quasi-polarized K3 surface the weight 2 polarized Hodge structure on $M^\perp\subset \Lambda_{\K3}$. This map is an isomorphism --- see Theorem~\ref{dolgachev-torelli} below.

\begin{defn}
The orthogonal complement $M^\perp\subset \Lambda_{\K3}$ is a lattice of signature $(2,19-l)$. The period domain of $M^\perp$ is defined by
$$\D(M^{\perp}) \defeq\{\omega \in \P(M^\perp\otimes \C): (\omega,\omega)=0,\, (\omega,\overline{\omega})>0 \},$$
which has two connected components interchanged by complex conjugation.
\end{defn}
\begin{defn}
The stable orthogonal group $\tilde{O}(M^\perp)$ is the set of lattice automorphisms which act trivially on the discriminant group $(M^{\perp})^{\vee}/M^\perp$. 
\end{defn}
This subgroup consists precisely of those automorphisms which can be lifted to automorphisms of $\Lambda_{\K3}$ acting trivially on $M$.

The arithmetic quotient $\D(M^{\perp})/\tilde{O}(M^\perp)$ has an analytic structure isomorphic to that of a quasi-projective variety. In fact, it is a Shimura variety of orthogonal type for the group $O(2,19-l)$, and this remains true when $\tilde{O}(M^\perp)$ is replaced with any arithmetic subgroup $\Gamma\subset O(2,19-l)$.
\begin{thm}\label{dolgachev-torelli}%
\cite[after Proposition~3.3]{dolgachev-lattice-polarized}
Let $\mathcal M^{M}_{\K3}$ be the coarse moduli space of $M$-quasi-polarized K3 surfaces. The period map
$$\mathcal M^{M}_{\K3} \to \D(M^\perp)/\tilde{O}(M^\perp)$$
is an isomorphism.
\end{thm}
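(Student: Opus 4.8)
The plan is to deduce the statement from two classical facts about K3 surfaces: the strong form of the global Torelli theorem --- any Hodge isometry $H^2(S',\Z)\toi H^2(S,\Z)$ carrying a Kähler class of $S'$ to a Kähler class of $S$ is induced by a unique isomorphism $S\toi S'$ --- and the surjectivity of the period map for marked K3 surfaces. First I would fix a primitive embedding $j\colon M\hookrightarrow\Lambda_{\K3}$; in the range of rank and discriminant relevant to the lattices $M$ that occur here there is only one such embedding up to $O(\Lambda_{\K3})$ (Nikulin), and every $M$-quasi-polarized pair $(S,\iota)$ then admits a marking $\phi\colon H^2(S,\Z)\toi\Lambda_{\K3}$ with $\phi\circ\iota=j$, unique up to the subgroup of isometries acting trivially on $j(M)$ --- which one checks acts on periods exactly through $O^+(M^\perp)$. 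Thus the assignment $(S,\iota)\mapsto[\phi(H^{2,0}(S))]$, which lands in $\D(M^\perp)$ because $H^{2,0}(S)\perp\NS(S)\supseteq\iota(M)$, descends to a well-defined map $\mathcal{M}^M_{\K3}\to\D(M^\perp)/O^+(M^\perp)$.

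For injectivity, suppose $(S,\iota)$ and $(S',\iota')$ have the same image downstairs. Choosing compatible markings and composing, I get a Hodge isometry $g\colon H^2(S',\Z)\toi H^2(S,\Z)$ with $g\circ\iota'=\iota$; since Hodge isometries preserve $\NS = H^{1,1}\cap H^2(-,\Z)$, $g$ also identifies $\NS(S')$ with $\NS(S)$. It need not send a Kähler class of $S'$ to one of $S$, but the discrepancy is measured by the Weyl group generated by Picard--Lefschetz reflections in effective $(-2)$-classes of $\NS(S)$; using that $\iota(M)^\perp\cap\NS(S)$ is negative definite and that a pseudo-ample class in $\iota(M)$ lies in the closure of the Kähler cone of $S$, one corrects $g$ by such reflections (and, if necessary, by a reflection exchanging the two halves of the positive cone) to a Hodge isometry that is still compatible with $\iota$ and $\iota'$ and now respects Kähler cones. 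Strong Torelli then yields an isomorphism $S\toi S'$ carrying $\iota'$ to $\iota$, so the corresponding points of $\mathcal{M}^M_{\K3}$ coincide.

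For surjectivity, given $[\omega]\in\D(M^\perp)\subset\P(\Lambda_{\K3}\otimes\C)$, surjectivity of the marked period map produces a marked K3 surface $(S,\phi)$ with $[\phi(H^{2,0}(S))]=[\omega]$; setting $\iota\defeq\phi^{-1}\circ j$, the relation $\omega\perp j(M)$ forces $\iota(M)\subset\NS(S)$, and primitivity is inherited from that of $j$. To upgrade $\iota$ to a quasi-polarization I would move $(S,\phi)$ within its orbit under isomorphisms and $O^+(M^\perp)$: the positive cone of $M$ is a union of chambers cut out by the walls $\delta^\perp$ for $(-2)$-classes $\delta\in\NS(S)$, and applying a suitable element of the Weyl group (realized by a change of marking, combined with $\pm\id$) moves a chosen positive class of $M$ into the closure of the Kähler cone of $S$ --- i.e. makes it nef and big --- so that $\iota(M)$ contains a pseudo-ample class. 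This exhibits a point of $\mathcal{M}^M_{\K3}$ over $[\omega]$. Finally, infinitesimal Torelli for K3 surfaces shows the period map is étale, and its target is a quasi-projective variety of the same dimension as $\mathcal{M}^M_{\K3}$; combined with injectivity this makes it an open immersion, and the surjectivity just shown promotes it to an isomorphism.

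The hard part will be the reflection-and-chamber bookkeeping that recurs in both halves of the argument: one must verify that the Weyl group generated by the relevant Picard--Lefschetz reflections acts transitively on the appropriate chambers of the positive cone of $M$ while preserving the $M$-polarization structure. This is the technical heart of Dolgachev's proof; the remainder is a formal consequence of classical Torelli theory and lattice theory, and is where I would expect to spend most of the effort.
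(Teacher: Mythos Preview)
The paper does not prove this theorem: it is quoted from Dolgachev's paper (the citation \cite[after Proposition~3.3]{dolgachev-lattice-polarized} is the entire ``proof'' the paper offers). So there is no argument in the paper to compare against. Your outline is essentially the standard route Dolgachev takes --- reduce to the classical global Torelli theorem and surjectivity of the period map for marked K3 surfaces, then use the Weyl group action by reflections in $(-2)$-classes to move between chambers of the positive cone and thereby arrange that the marking respects K\"ahler cones (for injectivity) or that $\iota(M)$ contains a pseudo-ample class (for surjectivity).

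One small caution about your sketch: you invoke Nikulin to say the primitive embedding $M\hookrightarrow\Lambda_{\K3}$ is unique ``in the range of rank and discriminant relevant to the lattices $M$ that occur here.'' The theorem in the paper is stated for an arbitrary $M$ of signature $(1,l)$ embeddable in $\Lambda_{\K3}$, and uniqueness of the embedding up to $O(\Lambda_{\K3})$ need not hold in general. This does not actually damage the argument, because the paper's definition of $O^+(M^\perp)$ --- isometries acting trivially on the discriminant group, equivalently those extending to $\Lambda_{\K3}$ --- is designed so that the period map is well-defined independent of which embedding one uses. But you should phrase the construction in terms of that characterization rather than leaning on uniqueness of the embedding.
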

Now suppose that $M'\subset M$ is a sublattice of signature $(1,l')$ with $l'<l$. (Our case of interest will be $U\subset U\oplus \E8(-2)$.) Then we have natural inclusions of the data defined above.
\begin{align*}
    M^\perp&\subset M'^\perp\\
    \D(M^\perp) &\subset \D(M'^\perp)
\end{align*}
If $\Gamma_M\subset \tilde{O}(M'^\perp)$ is the stabilizer of the sublattice $M^\perp\subset M'^\perp$, then we have a finite morphism of Shimura varieties
\begin{equation}\label{finitemorphism}
  \D(M^\perp)/\Gamma_M \to \D(M'^\perp) / \tilde{O}(M'^\perp).
\end{equation}
Any orthogonal type Shimura variety $\D(M^\perp)/\Gamma$ admits a compactification due to Satake and Baily-Borel. To construct it, the line bundle $\O_{\D(M^\perp)}(-1)$ descends to an orbi-bundle $\lambda$ on $\D(M^\perp)/\Gamma$ called the Hodge bundle, and we take
$$(\D(M^\perp)/\Gamma)^*  = \Proj\left( \bigoplus_{n\geq 0} H^0(\D(M^\perp)/\Gamma, \lambda^{\otimes n}) \right).$$
\begin{prop}\label{bbext}
The finite morphism \eqref{finitemorphism} of Shimura varieties extends to the respective Satake-Baily-Borel compactifications.
\end{prop}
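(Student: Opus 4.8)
The plan is to realize \eqref{finitemorphism} as the map on $\Proj$'s induced by pulling back automorphic forms along an equivariant inclusion of period domains. First I would recall that \eqref{finitemorphism} is the map on arithmetic quotients induced by the holomorphic embedding $\iota\colon\D(M^\perp)\hookrightarrow\D(M'^\perp)$: both domains are cut out inside $\P(\Lambda_{\K3}\otimes\C)$ by the same conditions, and since the quadratic form on $M^\perp$ is the restriction of the one on $M'^\perp$, the first domain lies in the second. This $\iota$ is equivariant for $\Gamma_M\subset O^+(M'^\perp)$, since $\Gamma_M$ by definition stabilizes $M^\perp$ inside $M'^\perp$ and hence preserves $\D(M^\perp)$ inside $\D(M'^\perp)$. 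Because both domains sit in the same ambient projective space, $\iota$ identifies the tautological bundle $\O_{\D(M'^\perp)}(-1)$ with $\O_{\D(M^\perp)}(-1)$, so the Hodge orbi-bundle $\lambda'$ on the target restricts to the Hodge orbi-bundle $\lambda$ on the source, as do the corresponding automorphy factors. Pullback along $\iota$ therefore sends weight-$k$ automorphic forms for $O^+(M'^\perp)$ to weight-$k$ automorphic forms for $\Gamma_M$ (the growth condition at the cusps being automatic by the Koecher principle), yielding a homomorphism of graded $\C$-algebras
\[
\iota^*\colon\bigoplus_{k\ge0}H^0\bigl(\D(M'^\perp)/O^+(M'^\perp),\lambda'^{\otimes k}\bigr)\longrightarrow\bigoplus_{k\ge0}H^0\bigl(\D(M^\perp)/\Gamma_M,\lambda^{\otimes k}\bigr)
\]
and hence, on taking $\Proj$, a rational map $(\D(M^\perp)/\Gamma_M)^*\dashrightarrow(\D(M'^\perp)/O^+(M'^\perp))^*$ which restricts on the open part to \eqref{finitemorphism}.

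It remains to show this rational map is everywhere defined, i.e.\ that $\iota^*\bigl(\bigoplus_{k>0}H^0(\lambda'^{\otimes k})\bigr)$ has empty base locus in $(\D(M^\perp)/\Gamma_M)^*$. Over the open stratum $\D(M^\perp)/\Gamma_M$ this is immediate: $\lambda'$ is ample on $(\D(M'^\perp)/O^+(M'^\perp))^*$ by Baily--Borel, so at the image of any point under the honest map $\iota$ some section of a power of $\lambda'$ is nonzero. For a boundary point I would invoke the description of the Satake--Baily--Borel boundary by rational boundary components, which for $\D(M^\perp)$ are indexed by the $\Q$-isotropic subspaces of $M^\perp\otimes\Q$ and for $\D(M'^\perp)$ by those of $M'^\perp\otimes\Q$; every $\Q$-isotropic subspace of $M^\perp\otimes\Q$ is a fortiori one of $M'^\perp\otimes\Q$. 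Granting that the boundary components attached to a common isotropic subspace are nested compatibly with the horospherical coordinates defining the Satake topology, $\iota$ extends to a continuous $\Gamma_M$-equivariant map of rational closures and hence, on quotients, to a continuous map $\bar\iota$ refining the rational map above. Then for a boundary point $p$, ampleness of $\lambda'$ furnishes a section $f$ of some $\lambda'^{\otimes k}$ with $f(\bar\iota(p))\ne0$; since $\iota^*f$, extended across the boundary as a section of $\lambda^{\otimes k}$, agrees with $\bar\iota^*f$, it is nonzero at $p$. Hence $p$ lies outside the base locus, the rational map is a morphism, and it extends \eqref{finitemorphism}.

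The main obstacle is the boundary compatibility granted in the previous paragraph: that near the boundary component of $\D(M^\perp)$ indexed by an isotropic $I\subset M^\perp\otimes\Q$, the subdomain $\D(M^\perp)$ accumulates precisely along the boundary component of $\D(M'^\perp)$ indexed by $I\subset M'^\perp\otimes\Q$, compatibly with the Satake topology and the attaching maps. This is a local computation in the Siegel-domain realizations of the two domains, and is the content of the general functoriality of the Baily--Borel compactification for equivariant holomorphic maps carrying rational boundary components to rational boundary components; one could also cite this functoriality directly (it is in Baily--Borel, and for orthogonal type follows from Satake's construction of $(\D(M^\perp)/\Gamma)^*$). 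The remaining ingredients --- the identification of \eqref{finitemorphism} with a pullback of automorphic forms, the $\O(-1)$-compatibility, the Koecher principle, and the ampleness of $\lambda$ --- are standard.
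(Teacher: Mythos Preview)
Your proposal is correct and follows essentially the same route as the paper: identify the Hodge line bundles via $\O(-1)$-compatibility, pull back automorphic forms to get a graded ring homomorphism, and take $\Proj$. The paper's proof stops there, simply asserting that a morphism of graded rings yields a morphism of projective varieties; you are more scrupulous in checking that the induced rational map on $\Proj$'s has empty base locus at the boundary, which the paper leaves implicit.
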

\begin{proof}
  Since $\O_{\D(M'^\perp)}(-1)$ restricts to $\O_{\D(M^\perp)}(-1)$ on $\D(M^\perp)$, the Hodge line bundle $\lambda$ on $\D(M'^\perp)/\tilde{O}(M'^\perp)$ restricts to Hodge line bundle on $\D(M^\perp)/\Gamma_M$. This induces a morphism of graded rings, and hence a morphism of projective varieties
\[
(\D(M^\perp)/\Gamma_M)^* \to (\D(M'^\perp) / \tilde{O}(M'^\perp))^*.
\qedhere
\]
\end{proof}
Topologically, the boundary of $(\D(M^\perp)/\Gamma)^*$ is a union of modular curves indexed by $\Gamma$-orbits of rank 2 primitive isotropic sublattices $J\subset M^\perp$, and points indexed by $\Gamma$-orbits of rank 1 primitive isotropic sublattices $I\subset M^\perp$. The latter appear as cusps of the former when $I\subset J$, but there may also be isolated boundary points. For each $J$, there are restriction maps \begin{align*}
    p&\colon \Stab_\Gamma(J) \to GL(J)\simeq GL_2(\Z),\\
    q&\colon \Stab_\Gamma(J) \to O(J^\perp/J).
\end{align*}
The open modular curve indexed by (the $\Gamma$-orbit of) $J$ is given by
$$Y_J\defeq\left(\P^1(J_\C) \smallsetminus \P^1(J_\R)\right)/p(\Stab_\Gamma(J)).$$
While the Satake-Baily-Borel compactification is badly singular at the boundary, one can obtain smooth compactifications obtained by blowing up. Mumford et al.\ \cite{mumford-smooth-compactifications} constructed a toroidal compactification for each $\D(M^\perp)/\Gamma$ depending on a choice of $\Gamma$-admissible fan $\Sigma$. All such choices agree if we consider only the partial compactification over the open modular curves indexed by $J$. We will refer to this partial compactification as $(\D (M^\perp)/\Gamma)^{\II}$.

The boundary of this partial compactification also has a concrete description. It consists of a union of fiber bundles over the open modular curves, where the fiber $\tau\in Y_J$ in the modular curve is :
$$(E_\tau \otimes_\Z (J^\perp/J))/q(\Stab_\Gamma(J)).$$
Here, $E_\tau$ is the elliptic curve with period $\tau$. Note that since $J^\perp/J$ is negative definite, the group $q(\Stab_\Gamma(J))\subset O(J^\perp/J)$ is finite.

The partial toroidal compactification is also functorial under isometric embeddings $M'\subset M$ as follows:
\begin{prop}\label{prop:partial-toroidal-functorial}
The finite morphism \eqref{finitemorphism} of Shimura varieties extends to the respective partial toroidal compactifications of Type II.
\end{prop}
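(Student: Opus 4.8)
The plan is to extend the interior morphism $\phi\colon \D(M^\perp)/\Gamma_M \to \D(M'^\perp)/O^+(M'^\perp)$ of \eqref{finitemorphism} across the Type II boundary by a local model comparison, using crucially that over a rank 2 (Type II) boundary component the toroidal compactification is \emph{canonical}: the relevant admissible fan lives in the one-dimensional cone spanned by the cusp direction, so there is no choice to match up. The proof will therefore have the same flavor as Proposition~\ref{bbext}, with the section rings of the Hodge bundle replaced by the explicit analytic charts near the Type II boundary.

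First I would set up the correspondence of boundary strata. Given a rank 2 primitive isotropic sublattice $J\subset M^\perp$, let $J'\subseteq M'^\perp$ be its saturation. Since $J$ and $J'$ span the same rational subspace, $J'$ is again a rank 2 primitive isotropic sublattice, and one checks $J = M^\perp\cap J'$. It follows that the perp lattices taken inside the two ambient lattices are related by $J^\perp/J = (M^\perp\cap J'^\perp)/J \hookrightarrow J'^\perp/J'$, that an isometry fixing $J$ fixes its saturation so $\Stab_{\Gamma_M}(J)\subseteq \Stab_{O^+(M'^\perp)}(J')$, and that this inclusion is compatible with the restriction maps $p$ to $GL(J)\cong GL(J')\cong GL_2(\Z)$ (as $J_\Q = J'_\Q$) and $q$ to $O(J^\perp/J)\to O(J'^\perp/J')$. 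In particular $\phi$ carries a punctured neighborhood of the $J$-cusp toward the $J'$-cusp, which is again of Type II (not Type III), so the partial compactification $(\D(M'^\perp)/O^+(M'^\perp))^{\II}$ really does contain a neighborhood of the locus we land in.

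Next I would invoke the explicit description of the boundary recalled just before the statement: over the modular curve $Y_J$, a neighborhood of the $J$-boundary divisor in $(\D(M^\perp)/\Gamma_M)^{\II}$ is the quotient by $q(\Stab_{\Gamma_M}(J))$ of the (unique, canonical) one-parameter torus-embedding partial compactification of the $(E_\tau\otimes_\Z (J^\perp/J))$-bundle over $Y_J$. The analogous chart for $J'$ receives this one via three compatible pieces: the isomorphism $Y_J\xrightarrow{\sim} Y_{J'}$ coming from $J_\Q = J'_\Q$, equivariant for the stabilizers; the fiberwise closed immersion $E_\tau\otimes(J^\perp/J)\hookrightarrow E_\tau\otimes(J'^\perp/J')$ induced by $J^\perp/J\hookrightarrow J'^\perp/J'$; and the fact that the normal coordinate to the boundary divisor, governed by a rank 1 isotropic $I\subset J$, is common to both since $J$ and $J'$ have the same saturation. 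These maps descend through the finite quotients by $q(\Stab)$, so they glue to a holomorphic map $\bar\phi$ extending $\phi$ over all Type II boundary components. Finally, composing with the canonical maps to the Satake--Baily--Borel compactifications, $\bar\phi$ agrees with the morphism of Proposition~\ref{bbext}; hence $\bar\phi$ is proper and analytic between quasi-projective varieties, so it is a morphism of varieties.

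The step I expect to be the main obstacle is the bookkeeping around (non)primitivity: a $J$ that is primitive in $M^\perp$ need not be primitive in $M'^\perp$, and one must consistently pass to the saturation $J'$ and verify that every piece of boundary data — the modular curve, the negative definite lattice $J^\perp/J$, the stabilizer groups together with their images under $p$ and $q$, and the one-dimensional cone — transports correctly through the finite-index but non-saturated inclusion $J\subset J'$. A secondary point that must be pinned down is that no Type II cusp of the source is sent into a deeper (Type III) cusp of the target, which would take the image outside the partial compactification; this is precisely the assertion that the saturation of a rank 2 primitive isotropic sublattice is again rank 2.
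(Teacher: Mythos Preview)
Your approach is sound but takes a different route from the paper's. The paper's proof is essentially one line: it observes that the Type II partial toroidal compactification is the blow-up of the (partial) Baily--Borel compactification along the open Type II boundary strata $Y_J$, and then invokes the universal property of blow-ups, using that Proposition~\ref{bbext} already supplies the extension at the Baily--Borel level (where it restricts to coverings of modular curves on the boundary). You instead work directly with the explicit analytic charts at each Type II cusp and glue by hand. Your approach has the merit of making the geometry of the extension transparent and of handling the bookkeeping around saturation explicitly; the paper's argument is slicker but leaves implicit the verification that the pulled-back ideal sheaf of the blown-up locus is invertible on the source.

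One small correction: you write ``the isomorphism $Y_J\xrightarrow{\sim} Y_{J'}$'', but in general $p(\Stab_{\Gamma_M}(J))$ is only a subgroup of $p(\Stab_{O^+(M'^\perp)}(J'))$, so what you get is a finite covering $Y_J\to Y_{J'}$ of modular curves, not an isomorphism. This does not affect the rest of your argument, and indeed matches what the paper says about the Baily--Borel extension amounting to ``coverings of open modular curves at the boundary.''
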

\begin{proof}
  The Type II partial toroidal compactification is the blow up of the boundary in the partial Baily-Borel compactification where only the open strata $Y_J$ are included. The extension of Proposition~\ref{bbext} amounts to coverings of open modular curves at the boundary. The desired extension here then follows from the universal property of blow ups.
\end{proof}

\subsection{Noether-Lefschetz divisors and modularity}\label{subsec:nl-theory}

Consider a flat family $\mathcal{X}\to T$ whose general fiber is a polarized K3 surface. We wish to count members of this family with jumping Picard rank.
This can be done by intersection theory with a collection of divisors in the appropriate period space, corresponding to a choice of polarizing lattice $M\subset\Lambda_{\K3}$ as in \S\ref{subsec:lattice-polarized}.

The theory is simpler in the case where $M$ is unimodular, and we will restrict ourselves to this case.
Specifically, we will assume that we have a map $\mathcal{X}\to T\times\P^1$ with a section $T\times\P^1\to\mathcal{X}$ giving the elliptic fibration structure on $X_t$ for each $b\in T$.
Thus for any elliptic K3 surface $X_t\to \P^1$ in our family, the fiber and zero section classes span a sublattice of $\NS(X)$ isomorphic to $U$, the rank 2 hyperbolic lattice, with Gram matrix
\[
  \begin{pmatrix} 0 & 1 \\ 1 & -2 \end{pmatrix}.
\]
Hence $\mathcal X \to T$ may be viewed as a family of $U$-polarized K3 surfaces.

\begin{remark}
We will use $U$ as the polarizing lattice in Section \ref{subsec:families-of-k3s} instead of a potentially larger available Neron-Severi group because $U$ is unimodular. This simplifies the possibilities for the Noether-Lefschetz series, at the expense of creating some excess intersection contributions. Furthermore, there is a unique isometric embedding $U\hookrightarrow \Lambda_{K3}$ up to integral isometries of $\Lambda_{K3}$, by \cite{james-witts-theorem}.
\end{remark}

\begin{defn}
Let $\Lambda = U^{\oplus 2}\oplus \E8(-1)^{\oplus 2}$, the orthogonal complement of the polarizing $U$ inside $\Lambda_{\K3}$. Let $\D(\Lambda)/\Gamma$ be the period space for $U$-quasi-polarized K3 surfaces as in \S\ref{subsec:lattice-polarized}.
\end{defn}
\begin{defn}
  For each positive integer $n$, the Noether-Lefschetz locus $\NL_n$ is given by
  \[\NL_n = \left(\bigcup_{\substack{v\in \Lambda \\ (v,v)=-2n}} v^\perp\right)/\Gamma\subset \D(\Lambda)/\Gamma.\]
\end{defn}

By Theorem~\ref{dolgachev-torelli}, our family $\mathcal{X}\to T$ of K3 surfaces has an associated (rational) lattice-polarized period map
\[j\colon T\dasharrow \mathbb  D(\Lambda)/\Gamma\]
defined for $t\in T$ such that $X_t$ has at worst ADE singularities. By the Lefschetz (1,1)-theorem, the image of $j$ meets the Noether-Lefschetz loci exactly when the Picard rank of $X_t$ is $\geq 3$.
\begin{remark}\label{rmk:brieskorn}
By Brieskorn's simultaneous resolution, an ADE singular K3 has the same period point as its minimal resolution, which registers a Picard rank jump.
\end{remark}

The $\NL_n$ are finite unions of codimension 1 subvarieties. Results of Borcherds \cite{borcherds-gross-kohnh-zagier-in-higher-dim} and Kudla-Millson \cite{kudla-milson-intersection-numbers-and-fourier} show that they are the coefficients of a cycle-valued modular form.
\begin{thm}\label{borch}
Let $\lambda$ be the class of the Hodge bundle on $ \D(\Lambda)/\Gamma$. Then
\[\Phi(q) = -\lambda + \sum_{n\geq 1} [\NL_n]q^n \in \Mod(\SL_2(\Z),10)\otimes_\Q \Hm^2(\D(\Lambda)/\Gamma,\Q).\]
\end{thm}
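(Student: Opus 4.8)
This is the Borcherds–Kudla–Millson modularity theorem for Noether–Lefschetz divisors on the orthogonal Shimura variety $\D(\Lambda)/\Gamma$, specialized to the unimodular lattice $M = U$ (so that $M^\perp = \Lambda = U^{\oplus 2}\oplus\E8(-1)^{\oplus 2}$ has signature $(2,18)$, discriminant trivial, and even rank $20$). The weight of the resulting modular form is $\tfrac{1}{2}\cdot\operatorname{rk}(\Lambda) = 10$, and since the discriminant form of $\Lambda$ is trivial, the relevant vector-valued modular form for the Weil representation reduces to a scalar-valued modular form for the full modular group $\SL_2(\Z)$; this is why the statement can be phrased with $\Mod(\SL_2(\Z),10)$ rather than a space of vector-valued forms. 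I would present the proof essentially as a citation-and-translation argument, since the hard analysis is done in \cite{borcherds-gross-kohnh-zagier-in-higher-dim} and \cite{kudla-milson-intersection-numbers-and-fourier}; the work here is checking that our geometric setup satisfies the hypotheses of those theorems.

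\smallskip

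First I would recall the Kudla–Millson construction: for a lattice $\Lambda$ of signature $(2,b)$, they construct a closed differential form (the Kudla–Millson form) on $\D(\Lambda)$ valued in the Schwartz space, whose theta integral produces, for each $n$, a cohomology class on $\D(\Lambda)/\Gamma$ representing the special cycle $[\NL_n]$ (Poincaré dual), together with the Hodge class $-\lambda$ in the constant term. The theorem of Borcherds (generalizing Gross–Kohnen–Zagier to higher dimension) then asserts that the generating series $\sum_n [\NL_n]q^n$, with constant term $-\lambda$, is a modular form of weight $1 + b/2$ valued in $\Hm^2(\D(\Lambda)/\Gamma,\Q)$, transforming under the Weil representation attached to the discriminant form $\Lambda^\vee/\Lambda$. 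Second, I would invoke unimodularity of $\Lambda$: since $U^{\oplus 2}\oplus\E8(-1)^{\oplus 2}$ is unimodular, $\Lambda^\vee/\Lambda = 0$, the Weil representation is trivial, and so the form is scalar-valued for $\SL_2(\Z)$ itself; its weight is $1 + 18/2 = 10$. This places $\Phi(q)$ in $\Mod(\SL_2(\Z),10)\otimes_\Q \Hm^2(\D(\Lambda)/\Gamma,\Q)$, exactly as claimed. Third, I would note that $\D(\Lambda)/\Gamma$ here is a genuine quasi-projective variety (Baily–Borel) and that $\Hm^2$ is the relevant cohomology group receiving the divisor classes; the identification of the Kudla–Millson class with the actual algebraic cycle class $[\NL_n]$ is part of the cited results (using that each $v^\perp$ for $(v,v)=-2n$ is an honest sub-Shimura-variety of codimension one, and the union over the finitely many $\Gamma$-orbits is the algebraic cycle $\NL_n$).

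\smallskip

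\textbf{The main obstacle.} The genuinely nontrivial input — which I would not reprove — is the modularity of the theta generating series itself, i.e. Borcherds' extension of Gross–Kohnen–Zagier, together with the Kudla–Millson identification of the coefficients with the special cycles. What I would actually need to verify carefully is the arithmetic bookkeeping: that $\operatorname{rk}\Lambda = 20$ (hence weight $10$), that $\Lambda$ is unimodular (hence scalar-valued for $\SL_2(\Z)$, no multiplier system or character), and that the normalization of the Hodge bundle $\lambda$ matches the constant term of the series with the stated sign $-\lambda$. A secondary subtlety is that $\NL_n$ as defined is a reduced union of divisors, whereas the Borcherds–Kudla–Millson coefficient is a priori a divisor with multiplicities coming from $v$'s of the same norm lying in the same $\Gamma$-orbit or being proportional; since $\Lambda$ is unimodular and we range over primitive and imprimitive $v$ with $(v,v) = -2n$, one should confirm that the definition of $\NL_n$ in the excerpt (union over all $v$, not just primitive ones) is precisely the one for which modularity holds with the classical Eisenstein-type weight-$10$ normalization. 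I would therefore organize the proof as: (1) state the Kudla–Millson / Borcherds theorem in the form applicable to signature $(2,18)$ unimodular lattices; (2) identify weight, level, and multiplier via the discriminant form; (3) match constant term with $-\lambda$; (4) conclude.
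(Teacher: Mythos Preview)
The paper does not give a proof of this theorem at all; it is stated as a known result with citations to Borcherds and Kudla--Millson in the sentence immediately preceding the statement. Your proposal is correct and is essentially an expanded version of that citation, spelling out the bookkeeping (rank $20$ gives weight $10$; unimodularity of $\Lambda$ trivializes the Weil representation so the form is scalar-valued for $\SL_2(\Z)$) that the paper leaves implicit.
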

\noindent
Now the space of classical modular forms of weight 10 is 1-dimensional:
\[\Mod(\SL_2(\Z),10) = \Q \eis_4 \eis_6.\]
To restate Theorem \ref{borch} more concretely, if we intersect a fixed element $\alpha\in \Hm_2(\D(\Lambda)/\Gamma,\Q)$ with each coefficient of the power series $\Phi(q)$, the result will be
\[-(\lambda\cdot \alpha)\eis_4 \eis_6.\]
Because our period map $j\colon T\dasharrow\D(\Lambda)/\Gamma$ is only rational, we do not in general get a pushforward class $\alpha\in \Hm_2(\D(\Lambda)/\Gamma,\Q)$.
However, following \cite{greer-quasimod-from-mixed}, we can extend the period map $j$ to a regular map $j^{\II}$ to the smooth partial toroidal compactification $(\D(\Lambda)/\Gamma)^{\II}$ of the arithmetic quotient from \S\ref{subsec:lattice-polarized}, for families $\mathcal{X}\to T$ with singular fibers of Type II.
We will denote by $\overline{\NL}_n\subset(\mathbb{D}/\Gamma)^{\II}$ the closures of $\NL_n\subset\mathbb{D}/\Gamma$.
By Theorem 23 in \cite{greer-quasimod-from-mixed}, we have the following generalization of Theorem \ref{borch}:
\begin{thm}\label{thm:quasimod}
Let $\lambda$ be the pullback of the Hodge bundle to $ (\D(\Lambda)/\Gamma)^{\II}$. Then
\[\Phi^{\II}(q) = -\lambda + \sum_{n\geq 1} [\overline{\NL}_n]q^n \in \QMod(\SL_2(\Z),10)\otimes_\Q \Hm^2((\D(\Lambda)/\Gamma)^{\II},\Q).\]
\end{thm}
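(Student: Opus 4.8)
The plan is to bootstrap from the open-variety modularity of Theorem~\ref{borch} by an analysis of the Type~II boundary; this is the strategy of \cite{greer-quasimod-from-mixed} (Theorem~23 there) applied with $\Lambda = U^{\oplus 2}\oplus\E8(-1)^{\oplus 2}$. Write the boundary as the disjoint union $(\D(\Lambda)/\Gamma)^\II\smallsetminus(\D(\Lambda)/\Gamma) = \bigsqcup_J\Delta_J$, where $J$ runs over the finitely many $\Gamma$-orbits of primitive rank-$2$ isotropic sublattices of $\Lambda$ and $\Delta_J$ is the fiber bundle over the open modular curve $Y_J$ described in \S\ref{subsec:lattice-polarized}. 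Restriction to the open locus sends $\lambda\mapsto\lambda$ and $[\overline{\NL}_n]\mapsto[\NL_n]$, so under $H^2((\D(\Lambda)/\Gamma)^\II,\Q)\to H^2(\D(\Lambda)/\Gamma,\Q)$ the series $\Phi^\II(q)$ maps to the genuinely modular $\Phi(q)$ of Theorem~\ref{borch}. By the Gysin sequence for the smooth boundary divisor, the kernel of this restriction is spanned by the boundary classes $[\Delta_J]$, so after fixing an arbitrary modular lift $\widehat\Phi(q)\in\Mod(\SL_2(\Z),10)\otimes H^2$ of $\Phi(q)$ we may write $\Phi^\II(q)=\widehat\Phi(q)+\sum_J c_J(q)[\Delta_J]$ with $c_J(q)=\sum_{n\ge1}\bigl(\mathrm{mult}_{\Delta_J}\overline{\NL}_n\bigr)q^n$; it then remains to show that each $c_J(q)$ is quasi-modular of weight $10$.

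To identify $c_J(q)$ I would use the explicit geometry of $\Delta_J$: it is a bundle over $Y_J$ with fiber the abelian variety $E_\tau\otimes_\Z K$, where $K=J^\perp/J$ is a negative-definite \emph{even unimodular} lattice of rank $16$ (since $\Lambda$ is unimodular and $J$ is primitive isotropic), and transverse to $\Delta_J$ the variation of Hodge structure is a Type~II degeneration whose single nontrivial weight-graded piece is governed by $E_\tau$. A hyperplane $v^\perp$ with $v^2=-2n$ acquires a limiting position along $\Delta_J$ that depends only on the image $\bar v\in K$; summing contributions over all such $v$, the ``fiber direction'' produces the theta series $\theta_K(q)=\eis_4^2$, a weight-$8$ modular form, while the contribution from the cusp of the noncompact base $Y_J$ is only a weight-$2$ \emph{quasi}-modular piece: the relevant intersection number is a lattice-point count at the cusp, which by the usual Siegel--Weil regularization is controlled by $\eis_2$ rather than a holomorphic modular form (equivalently, the Hodge bundle restricted near the cusp of a modular curve has quasi-modular periods). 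Thus $c_J(q)$ lies in $\QMod(\SL_2(\Z),10)=\langle\eis_4\eis_6,\eis_2\eis_4^2\rangle$, and since it has zero constant term, Ramanujan's identity \eqref{eq:ramanujan} in fact pins it down to a multiple of $\eis_4\,D\eis_4$. Summing over the finitely many $J$ then gives $\Phi^\II(q)\in\QMod(\SL_2(\Z),10)\otimes_\Q H^2((\D(\Lambda)/\Gamma)^\II,\Q)$.

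The main obstacle is the boundary-multiplicity computation: one must make precise the limit mixed Hodge structure and monodromy weight filtration of the Type~II degeneration along each $\Delta_J$, determine with what multiplicity the closure $\overline{\NL}_n$ meets $\Delta_J$, and match the resulting generating series to $\theta_K\cdot\eis_2$ uniformly in $J$, while tracking the orbifold structure of $(\D(\Lambda)/\Gamma)^\II$ along the boundary. This is where the non-modular $\eis_2$ enters in an essential way, and for our purposes it is supplied by Theorem~23 of \cite{greer-quasimod-from-mixed}: its hypotheses hold because the relevant family $\mathcal X\to T$ of elliptic K3 surfaces degenerates only to Type~II over the locus on which $j^\II$ is defined.
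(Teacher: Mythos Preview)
The paper does not prove Theorem~\ref{thm:quasimod}; it is quoted directly as Theorem~23 of \cite{greer-quasimod-from-mixed}, with no further argument. You identify this citation, so in that narrow sense your proposal matches the paper.

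Your sketch of the underlying argument---reduce via the Gysin sequence to a correction supported on the boundary components $\Delta_J$, then identify that correction as a weight-$10$ quasi-modular form---is reasonable in outline, and the lattice computation $\theta_{J^\perp/J}=\eis_4^2$ is correct. But the crucial step is only asserted: you say the boundary contributes ``a weight-$2$ quasi-modular piece controlled by $\eis_2$'' coming from ``the cusp of $Y_J$'', and that is precisely the content of the theorem, not an explanation of it. The identification $c_J(q)=\sum_n(\mathrm{mult}_{\Delta_J}\overline{\NL}_n)\,q^n$ also depends on the particular lift $\widehat\Phi$ and is not canonical as written.

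Your final sentence contains a genuine confusion. Theorem~\ref{thm:quasimod} is a statement about cohomology classes on $(\D(\Lambda)/\Gamma)^\II$ and involves no family of K3 surfaces; there are no hypotheses about $\mathcal X\to T$ to verify. The family and its Type~II degenerations enter only in Proposition~\ref{prop:nl-series-formula}, where the paper pairs $\Phi^\II(q)$ with $j^\II_*[T]$ and invokes a \emph{different} result (Theorem~34 of \cite{greer-quasimod-from-mixed}) to pin down the scalar series $\varphi(q)$. The Type~II condition on the family is what allows $j^\II$ to extend to the partial compactification so that this pairing is defined; it is not an input to Theorem~\ref{thm:quasimod} itself.
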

In \S\ref{subsec:families-of-k3s}, we will compute the resulting quasi-modular form $j^{\II}_*[T]\cdot\Phi^{\II}(q)$ for a certain family $\mathcal{X}\to T$ of interest from the perspective of Severi curves.

\subsection{Families of K3 surfaces from a RES}\label{subsec:families-of-k3s}
Let $\pi\colon R\to\P^1$ be a general rational elliptic surface, and now fix a general double cover $u:\P^1\to\P^1$
with branch divisor $p_1+p_2$ in $\P^1$, and consider the pullback square:
\begin{equation}\label{square}
\begin{tikzcd}
  X \ar[r, "u'"]\ar[d, "\pi'"'] & R \ar[d, "\pi"] \\
  \P^1 \ar[r, "u"] & \P^1
\end{tikzcd}
\end{equation}
\noindent
The vertical morphisms are elliptic fibrations, and $X$ is an elliptic K3 surface. The top morphism $u'$ is a double cover branched along the pair of elliptic fibers $\pi^{-1}(p_1)+\pi^{-1}(p_2)$. Let $B\subset R$ be an irreducible rational bisection. Since $\pi|_B:B\to \P^1$ is a double cover, it has some branch divisor $q_1+q_2$ in $\P^1$. If $q_1+q_2$ is disjoint from $p_1+p_2$, then $(u')^{-1}(B)$ will be a curve of geometric genus 1 on $X$. 

This perspective provides a shorter, independent proof of the fact that the Severi varieties we consider are 1-dimensional, which is part of Theorem \ref{thm:reducedseveri-with-proof}.
\begin{prop}\label{expdimseveri}
Let $B\subset R$ be an irreducible rational bisection, and set $L=\O_R(B)$. Then the Severi variety $V(L)\subset |L|$ is 1-dimensional.
\end{prop}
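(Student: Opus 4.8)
The plan is to trap $\dim V(L)$ between a lower bound coming from deformation theory and an upper bound coming from the equisingular deformation count of Theorem~\ref{thm:reducedseveri-with-proof}; both bounds equal the expected value $-K_R\cdot L-2+\chi(\O_R)=2-2+1=1$ predicted by \eqref{eq:expdimformula}.

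For the lower bound, observe first that the hypothesis supplies an irreducible rational bisection $B$, so $V(L)\neq\emptyset$. I would then argue that every component of $V(L)$ has dimension $\geq 1$ by deformation theory: every component of the stack of genus-zero stable maps $\Mb_0(R,L)$ has dimension at least its virtual dimension $(\dim_\C R-3)(1-0)-K_R\cdot L=-1+2=1$; restricting to the closure $\Mb_0^{\mathrm{bir}}(R,L)$ of the locus of maps birational onto their image --- nonempty, since $B$ itself arises from such a map --- and composing with the cycle map $\Mb_0^{\mathrm{bir}}(R,L)\to|L|$, which is generically injective and hence generically finite, one realizes every component of $V(L)$ as the closure of the image of a component of $\Mb_0^{\mathrm{bir}}(R,L)$, and the latter has dimension $\geq 1$. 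Equivalently, one may cite the classical fact that a nonempty locus of curves of geometric genus $0$ in a linear system $|L|$ has every component of codimension at most $p_a(L)=g(L)$, i.e.\ dimension at least $r(L)-g(L)=1$ by \eqref{eq:rr}.

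For the upper bound, let $V_0\subseteq V(L)$ be an irreducible component and $B_0$ a general member, necessarily an irreducible rational bisection. By upper-semicontinuity of the analytic singularity type along $|L|$, a dense open subset of $V_0$ lies in a single equisingular stratum, that of $B_0$; hence $T_{[B_0]}V_0$ embeds into the space of first-order equisingular deformations of $B_0$ inside $|L|$. By Theorem~\ref{thm:reducedseveri-with-proof} this space is $0$-dimensional if $B_0$ carries a worse-than-nodal singularity and $1$-dimensional otherwise, so $\dim V_0=\dim_{[B_0]}V_0\leq 1$ in all cases. Combining the two bounds, every component of $V(L)$ has dimension exactly $1$, whence $\dim V(L)=1$.

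The one delicate step is the lower bound: the naive count via the normal sheaf of the normalization $\nu\colon\P^1\to B_0\subset R$, which has degree $-K_R\cdot L-2=0$, is not conclusive when $\nu$ fails to be an immersion, so one genuinely wants the virtual-dimension input for $\Mb_0(R,L)$ (or the excess-dimension statement for geometric genus in a linear system). Granting that, the upper bound is immediate from the already-established Theorem~\ref{thm:reducedseveri-with-proof}, and as a byproduct the two bounds together force a general member of each component of $V(L)$ to be nodal, recovering the first assertion of Theorem~\ref{thm:reducedseveri}. One could alternatively push $V(L)$ into the geometric-genus-one locus of $|(u')^{*}L|$ on the K3 surface $X$ of \eqref{square} via $B\mapsto (u')^{-1}(B)$, but this would require a separate dimension estimate for genus-one Severi loci on K3 surfaces, whereas the argument above stays self-contained within the paper.
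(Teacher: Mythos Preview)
Your argument is correct, but it follows a different route from the paper's proof. The paper handles the upper bound by precisely the alternative you mention (and dismiss) at the end: it pulls the family of rational bisections in $R$ back along a general double cover $u\colon\P^1\to\P^1$ to obtain a family of geometric-genus-one curves on the K3 surface $X$ of diagram~\eqref{square}, and then invokes the standard fact that genus-one curves on a K3 vary in at most $1$-parameter families because no complex K3 surface is uniruled. The paper's proof is in fact quite terse and really only records this upper bound; the lower bound is left implicit, whereas you spell it out via the virtual dimension of $\Mb_0(R,L)$.

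The trade-off is this: your route stays internal to the paper by leaning on Theorem~\ref{thm:reducedseveri-with-proof}, but that theorem is a rather heavy input (multiplier ideals, Nadel vanishing) for what the proposition asserts. The paper's K3 argument is lighter and more conceptual --- non-uniruledness of K3 surfaces is all that is needed --- and it generalizes immediately to show that rational $d$-sections vary in $(d-1)$-parameter families, which your equisingular approach does not yield without reproving the analogue of Theorem~\ref{thm:reducedseveri-with-proof}. So the ``separate dimension estimate for genus-one Severi loci on K3 surfaces'' that you worried about is in fact a one-line consequence of non-uniruledness, and is exactly the path the authors chose.
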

\begin{proof}
  Geometric genus 1 curves on a K3 surface vary in 1-parameter families, since no K3 surface over $\C$ is uniruled. Any family of irreducible rational bisections in $R$ induces a family of genus 1 curves on a K3 surface by the construction above. The same argument shows that rational multisections of degree $d$ vary in a family of dimension $d-1$.
\end{proof}
\noindent
Now suppose that $p_1+p_2=q_1+q_2$. Then $(u')^{-1}(B)$ will be a reducible curve with two rational components, each a section of $\pi'$, conjugate under the involution of $X$ associated to the double cover $u'$.
\begin{prop}\label{prop:galois}
Let $\Sigma \subset X$ be a section of $\pi'$. Then either (i) $\Sigma$ is the base change of a section of $\pi$, or (ii) $\Sigma$ is a component of $(u')^{-1}(B)$, where $B$ is a bisection of $\pi$ with the same branch points as $u$.
\end{prop}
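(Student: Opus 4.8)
The plan is to exploit the Galois action on $X$ associated to the double cover $u' \colon X \to R$. Let $\sigma$ denote the covering involution, so that $R = X/\sigma$ and $u'_* u'^* = 1 + \sigma^*$ on cycles. Given a section $\Sigma \subset X$ of $\pi'$, I would first push it forward to $R$ and analyze $u'_*[\Sigma]$ according to whether $\Sigma$ is $\sigma$-invariant or not. If $\sigma(\Sigma) = \Sigma$, then $\Sigma$ maps isomorphically (or $2:1$) onto its image in $R$; since $\pi'|_\Sigma$ is an isomorphism to $\P^1$ and $\sigma$ covers the deck transformation of $u$, the image $\epsilon(\Sigma) \subset R$ is a multisection of $\pi$ whose degree over $\P^1$ I can read off — and the fixed locus of $\sigma$ on $X$ lies over the ramification fibers $\pi'^{-1}(u^{-1}(p_i))$, so a $\sigma$-invariant section must actually map $1:1$ onto a genuine section of $\pi$, giving case (i). If instead $\sigma(\Sigma) \neq \Sigma$, then $\Sigma + \sigma(\Sigma)$ is $\sigma$-invariant and descends to a curve $B \subset R$ with $u'^{-1}(B) = \Sigma \cup \sigma(\Sigma)$; I then need to identify $B$ as a bisection and pin down its branch locus.

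The key computation is the intersection-theoretic one: $B \cdot F_R = \tfrac12 (u'^*B) \cdot (u'^* F_R)$ where $F_R$ is a fiber of $\pi$, and $u'^* F_R$ is either $2 F_X$ (over the branch points $p_i$) or $F_{X,t_1} + F_{X,t_2}$ with $u(t_j)$ a non-branch point. Since $u'^*B = \Sigma + \sigma(\Sigma)$ and each of $\Sigma, \sigma(\Sigma)$ is a section of $\pi'$, we get $(u'^*B)\cdot F_X = 2$, hence $B \cdot F_R = 1$ over generic points — wait, this forces me to recompute carefully: over a generic point $x \in \P^1$ with $u^{-1}(x) = \{t_1,t_2\}$, the two points $\Sigma \cap F_{X,t_1}$, $\Sigma\cap F_{X,t_2}$ map to one point of $B \cap F_{R,x}$ each, but $\sigma$ swaps $F_{X,t_1} \leftrightarrow F_{X,t_2}$, so $\Sigma$ and $\sigma(\Sigma)$ together contribute two points to $B \cap F_{R,x}$. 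Thus $B$ is a bisection. The branch points of $\pi|_B \colon B \to \P^1$ are exactly the points $x$ over which the two sheets of $B$ come together, i.e. where $\Sigma \cap F_{X,t_1}$ and $\sigma(\Sigma)\cap F_{X,t_2}$ map to the same point of $R$; since $\sigma$ identifies these fibers precisely over $x \in \{p_1, p_2\}$, the branch divisor of $B$ is $p_1 + p_2$, matching $u$. This is case (ii).

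The remaining piece is to rule out the degenerate possibility that $\Sigma$ or $\sigma(\Sigma)$ is itself a base-changed section while the pair still descends — but if $\Sigma = u'^* s$ for a section $s$ of $\pi$, then $\sigma(\Sigma) = \Sigma$, contradicting $\sigma(\Sigma)\neq\Sigma$; so the dichotomy is genuinely exclusive. I should also confirm that in case (ii) the curve $B$ is irreducible: were $B = s_1 + s_2$ a sum of two sections of $\pi$, its preimage would be four sections of $\pi'$, not two, so $B$ is an irreducible bisection (allowing $B = 2s + mF$ in the Weierstrass case, which still has $u'^{-1}(B)$ of the stated form). The main obstacle I anticipate is bookkeeping the fixed locus of $\sigma$ and the ramification behavior over $p_1 + p_2$ cleanly enough to conclude that a $\sigma$-invariant section of $\pi'$ genuinely descends to a section of $\pi$ rather than merely to a bisection — this requires checking that $\Sigma$ meets each ramification fiber $\pi'^{-1}(u^{-1}(p_i))$ in a single point lying in the fixed locus of $\sigma$, which follows since a section meets every fiber once and $\sigma$ preserves both $\Sigma$ and that fiber.
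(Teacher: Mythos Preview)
Your approach via the Galois involution $\sigma$ is valid and reaches the right dichotomy, but it is a genuinely different route from the paper's, which is much shorter. The paper simply observes that the composite $\pi\circ u'|_\Sigma = u\circ \pi'|_\Sigma\colon \Sigma\to\P^1$ has degree~$2$ (since $\pi'|_\Sigma$ is an isomorphism and $u$ is a double cover), and then factors this map as $\Sigma\xrightarrow{u'} u'(\Sigma)\xrightarrow{\pi}\P^1$: either $u'|_\Sigma$ has degree~$2$ and the image is a section of $\pi$ (case (i)), or $u'|_\Sigma$ is birational and the image is a bisection inheriting the branch locus of $u$ (case (ii)). Your argument instead distinguishes the cases by whether $\sigma(\Sigma)=\Sigma$ and then analyzes the descent of $\Sigma$ or $\Sigma+\sigma(\Sigma)$; this is more explicit about the geometry of the cover but correspondingly longer.

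Two local slips to note. In your case (i), a $\sigma$-invariant section maps $2{:}1$ onto its image in $R$, not $1{:}1$: your own fixed-locus observation shows $\sigma$ acts nontrivially on $\Sigma$, so $u'|_\Sigma$ is a genuine double cover; it is the \emph{image} that maps $1{:}1$ to $\P^1$, whence a section. In your irreducibility check at the end, the preimage $u'^{-1}(s_1+s_2)$ of a sum of two sections consists of two sections of $\pi'$, not four (each $u'^{-1}(s_i)$ is a single irreducible section); the contradiction with $\sigma(\Sigma)\neq\Sigma$ comes instead from each $u'^{-1}(s_i)$ being $\sigma$-invariant. In any case, the proposition does not assert that $B$ is irreducible, so this verification is unnecessary.
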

\begin{proof}
Consider $u'|_\Sigma : \Sigma \to R$. Since the pullback square \eqref{square} is commutative,
\[\pi\circ u'|_\Sigma: \Sigma \to \P^1\]
is a double cover. This implies that either (i) $u'|_\Sigma$ is a double cover, or (ii) $u'|_\Sigma$ is generically 1-1 and its image is a bisection of $\pi$. In the first case, $u'(\Sigma)$ is a section of $\pi$, and in the second case, $u'(\Sigma)$ is a bisection of $\pi$ with the same branch points as $u$.
\end{proof}
\begin{prop}
The surface $X$ is smooth iff the branch divisor $p_1+p_2$ is disjoint from the critical values of $\pi$. Otherwise, $X$ has an ordinary double point located at the node of a singular fiber over the critical value in question.
\end{prop}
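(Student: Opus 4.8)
The plan is to reduce the whole statement to a local computation near the fibers $\pi^{-1}(p_1),\pi^{-1}(p_2)$. Write $X = R\times_{\P^1}\P^1$ and let $u'\colon X\to R$ be the projection; it is the base change of $u$ along $\pi$, hence a double cover of $R$ branched along $\pi^{-1}(p_1)+\pi^{-1}(p_2)$. Étale morphisms are stable under base change, and $u$ is étale over $\P^1\smallsetminus\{p_1,p_2\}$, so $u'$ is étale over $R\smallsetminus(\pi^{-1}(p_1)\cup\pi^{-1}(p_2))$; as $R$ is smooth, $X$ is smooth there. It therefore suffices to examine $X$ near $(u')^{-1}(\pi^{-1}(p_i))$ for $i=1,2$.

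First I would dispose of the points at which $\pi$ is already smooth. If $\pi$ is smooth at a point $x\in\pi^{-1}(p_i)$ --- for instance at every point of an $I_1$ fiber other than its node, or at every point of $\pi^{-1}(p_i)$ when $p_i$ is not a critical value --- then $\pi$ is smooth on a neighborhood of $x$, and since smoothness is preserved under base change, $X\to\P^1$ is smooth over the corresponding neighborhood in the source $\P^1$; as the source $\P^1$ is smooth, $X$ is smooth at all points over $x$. In particular, if neither $p_1$ nor $p_2$ is a critical value of $\pi$, then $X$ is smooth everywhere.

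Finally, I would analyze $X$ near the node in the remaining case, when $p_i$ \emph{is} a critical value. Since $R$ is general, $\pi^{-1}(p_i)$ is an irreducible nodal cubic of Kodaira type $I_1$, and its node $x_0$ is the unique point of $\pi^{-1}(p_i)$ at which $\pi$ is not smooth. By the local normal form for an $I_1$ fiber (see \cite{miranda-basic-elliptic}), there are analytic coordinates $(x,y)$ on $R$ centered at $x_0$ and $s$ on the target $\P^1$ centered at $p_i$ with $\pi^*s = xy$; and since $u$ is totally ramified over $p_i$, a coordinate $t$ centered at $u^{-1}(p_i)$ on the source $\P^1$ satisfies $u^*s = t^2$ after absorbing a unit. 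Hence, analytically locally near the unique point $\xi$ of $X$ over $x_0$, the surface $X$ is isomorphic to a neighborhood of the origin in the hypersurface $\{xy = t^2\}\subset\C^3$; equivalently, its complete local ring is $\C[[x,y,t]]/(xy-t^2)$, and since $xy-t^2$ has a nondegenerate Hessian this is an ordinary double point. Together with the previous paragraph (which shows $X$ is smooth at all other points over $\pi^{-1}(p_i)$), this proves that $X$ is singular, with exactly one ordinary double point lying over the node $x_0$, whenever some $p_i$ is a critical value --- and hence the claimed equivalence.

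I expect the one step deserving care to be the appeal to the $I_1$ normal form $\pi^*s = xy$ at the node: namely, that for general $R$ every singular fiber is of type $I_1$, that over such a value the node is the unique point where $\pi$ fails to be smooth, and that in a smooth chart at the node $\pi$ is the product map. Granting that, everything else --- stability of étale and smooth morphisms under base change, and recognizing $\{xy = t^2\}$ as an ordinary double point --- is routine.
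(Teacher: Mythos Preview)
Your proof is correct and follows essentially the same approach as the paper: a local computation showing that the double cover $u'\colon X\to R$ acquires an ordinary double point exactly when the branch divisor (the pair of fibers) has a node. The paper compresses this to two lines---writing the cover locally as $z^2=b(x,y)$ and observing that a smooth (resp.\ nodal) branch curve yields a smooth (resp.\ nodal) surface---whereas you spell out the étale and smooth base-change reductions and derive the equation $t^2=xy$ from the $I_1$ normal form, but the underlying argument is the same.
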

\begin{proof}
This follows from a local computation. If $b(x,y)=0$ cuts out a smooth curve, then $z^2 = b(x,y)$ cuts out a smooth surface. If $b(x,y)=0$ cuts out a nodal curve, then $z^2 = b(x,y)$ cuts out a nodal surface.
\end{proof}
\noindent
To relate the study of bisections of $R$ to Noether-Lefschetz theory for K3s, we construct a test curve $T$ in the moduli space of elliptic K3 surfaces by considering a 1-parameter family of double covers $u_t:\P^1 \to \P^1$, and pulling back the fibration $\pi:R \to \P^1$.

Let $Y$ be the double cover of $\P^1\times \P^1$ branched along a general curve of bidegree $(2,2)$. 
The family version of the pullback square \eqref{square} is the following Cartesian diagram:
\begin{equation}\label{eq:family-of-k3-diag}
  \begin{tikzcd}
    \mathcal X \ar[d, "h"']\ar[r, "\bar\rho"] & R\times \P^1 \ar[d, "{\pi\times\id}"] \\
    Y \ar[r, "\rho"] & \P^1\times \P^1 \ar[d, "\pr_2"] \\
    & \P^1&[-40pt]=T.
  \end{tikzcd}
\end{equation}
The resulting morphism $f \defeq \pr_2 \circ \rho \circ h :\mathcal X \to T$ gives a family of K3 surfaces which is generically smooth, but has some nodal and even reducible fibers. Indeed, the $(2,2)$-curve is ramified at 4 points over $T$, and these correspond to reducible curves $\P^1\cup_x\P^1$ in the family $Y\to T$.
The corresponding fibers of $\mathcal{X} \to T$ are of the form $R\cup_ER$ -- two rational elliptic surfaces glued along a smooth elliptic fiber.
\begin{prop}\label{prop:conic}
The family of double covers of $\P^1$ induces a map
\[T \to \Sym^2(\P^1).\]
The image of $T$ under the isomorphism $\Sym^2(\P^1) \simeq \P^2$ is a conic.
\end{prop}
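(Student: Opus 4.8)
The plan is to make the map $T \to \Sym^2(\P^1)$ completely explicit and then compute the class of its image. Recall that $T = \P^1$ is the second factor of $\P^1 \times \P^1$, and that $Y \to \P^1 \times \P^1$ is the double cover branched along a general $(2,2)$-curve $C$. For a point $t \in T$, the fiber of $Y \to T$ over $t$ is the double cover of the first-factor $\P^1$ branched along $C \cap (\P^1 \times \{t\})$, which is a divisor of degree $2$ in $\P^1$ — i.e., a point of $\Sym^2(\P^1)$. This is the map in question. Concretely, if $C$ is cut out by a bihomogeneous form $f(x_0,x_1;t_0,t_1)$ of bidegree $(2,2)$, then for fixed $[t_0:t_1]$ the form $f(x_0,x_1;t_0,t_1)$ is a binary quadratic form in $(x_0,x_1)$, and the map sends $[t_0:t_1]$ to the corresponding element of $\Sym^2(\P^1) \cong \P H^0(\P^1,\O(2)) \cong \P^2$. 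In coordinates, writing $f = a(t) x_0^2 + b(t) x_0 x_1 + c(t) x_1^2$ with $a,b,c$ binary quadratics in $t$, the map is $[t_0:t_1] \mapsto [a(t) : b(t) : c(t)]$.

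Next I would identify the discriminant conic. Under $\Sym^2(\P^1) \cong \P^2$, the locus $\Delta$ of non-reduced divisors (a point counted twice) is exactly the conic $\{[a:b:c] : b^2 - 4ac = 0\}$. The image of $T$ is the closure of $\{[a(t):b(t):c(t)] : t \in \P^1\}$. Since $a,b,c$ are quadratics in $t$, the map $T \to \P^2$ is (generically) given by degree-$2$ polynomials, so its image is a curve of degree at most $2$; a degree-$1$ image would force $a,b,c$ to be proportional after a linear change, contradicting generality of $C$. Hence the image is a plane conic, provided the map is not constant, which again follows from generality. (Equivalently: the image is a conic because the four branch points of $Y \to T$ — the values of $t$ where $C \cap (\P^1 \times \{t\})$ is non-reduced — are exactly the preimages of $\Delta$, and a general pencil of binary quadratics has four such degenerate members, consistent with $2 \cdot 2 = 4 = \deg(\text{image}) \cdot \deg(\Delta)$ by Bézout.)

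To finish, I would argue that the map $T \to \P^2$ is birational onto this conic, so the image is an irreducible (smooth) conic rather than a doubled line. For generic $C$, the three quadratics $a,b,c$ span the full $3$-dimensional space of binary quadratics in $t$, so the associated map $\P^1 \to \P^2$ is the standard Veronese-type embedding (up to the choice of basis), which is an embedding; in particular it is birational onto a smooth conic. The main obstacle, such as it is, is simply bookkeeping the genericity hypotheses: one must check that for a general $(2,2)$-curve the coefficient quadratics $a(t), b(t), c(t)$ are linearly independent, so that the image is a genuine conic and not a line or a point. This is an open dense condition, automatically satisfied by the phrase ``general curve of bidegree $(2,2)$'' in the setup of diagram~\eqref{eq:family-of-k3-diag}.
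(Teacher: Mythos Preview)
Your proof is correct. Your primary argument---writing the map explicitly as $[t_0:t_1]\mapsto[a(t):b(t):c(t)]$ with $a,b,c$ quadratic, so that the image has degree at most $2$, and then invoking genericity to rule out a line or point---is more hands-on than the paper's, which simply observes that the image meets the discriminant conic $\Delta$ in $4$ points (the ramification points of the $(2,2)$-curve over $T$) and concludes degree $2$ by B\'ezout. You actually include this B\'ezout argument as a parenthetical aside, so you have both approaches. The explicit-coordinate route has the advantage of also showing directly that the map is an embedding (the Veronese, for generic $C$), while the paper's B\'ezout argument is shorter but requires knowing in advance that the image meets $\Delta$ in exactly $4$ points and that the map is birational onto its image (which the paper does not fully justify).
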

\begin{proof}
The diagonal $\Delta\subset \Sym^2(\P^1)$ is a conic given by the discriminant form $b^2-4ac$. Since the image of $T$ meets $\Delta$ in 4 points, it must also be a conic by B\'{e}zout's Theorem. In fact, by a dimension count, a general choice of $(2,2)$-curve yields a general conic in $\Sym^2(\P^1)$.
\end{proof}
\noindent
For each bisection line bundle $L$ on $R$ we have a Severi curve $V(L)$ along with a branch locus map $V(L) \dasharrow \Sym^2(\P^1)$. After composing with the normalization of $V(L)$, this extends to a regular map:
\[f_L: \widetilde{V}(L) \to \Sym^2(\P^1).\]
By Proposition \ref{prop:galois}, each intersection point of $T$ with $\widetilde{V}(L)$ in $\Sym^2(\P^1)$ corresponds to a K3 surface in the family $\mathcal X \to T$ with two extra sections that are Galois conjugate. This observation will allow us to use the Noether-Lefschetz numbers of $T$ to study the degrees of the images of the branch morphisms $f_L$.

\begin{prop}\label{prop:hodgedegree}
  The Hodge line bundle \( f_* (\omega_{\mathcal{X}/T}) \) of the K3 family $f \colon \mathcal X \to T$ is isomorphic to $\O_{\P^1}(1)$.
\end{prop}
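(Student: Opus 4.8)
The plan is to compute the Hodge bundle $\bar\rho_*\omega_{\mathcal X/Y}$ (equivalently its restriction along the structure map to $T$) by tracking what happens under the base change in diagram \eqref{eq:family-of-k3-diag}. Recall from \S\ref{sec:elliptic-surfaces} that the fundamental line bundle of the rational elliptic surface $\pi\colon R\to\P^1$ is $\O_{\P^1}(1)$, since $R$ has type $k=1$. So the first step is purely relative: for the family $\mathcal X\to Y$, the fundamental line bundle is the pullback to $Y$ of the fundamental line bundle of $\pi\times\id\colon R\times\P^1\to\P^1\times\P^1$, which is $\mathrm{pr}_1^*\O_{\P^1}(1)$ on $\P^1\times\P^1$; here one uses flat base change along the Cartesian square, and the fact that $h$ is a degree-$2$ cover branched along a divisor pulled back from $\P^1\times\P^1$, so that $\omega_{\mathcal X/Y}$ differs from $\bar\rho^*(\text{something})$ only by contributions supported on fibral divisors that do not affect $\bar\rho_*$ of the relative dualizing sheaf along the smooth locus. (More precisely, the elliptic K3 surface $X_t$ obtained as the pullback of $R$ along $u_t\colon\P^1\to\P^1$ has fundamental line bundle $u_t^*\O_{\P^1}(1)=\O_{\P^1}(2)$, consistent with $X_t$ being type $k=2$.)

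The second step is to pull this back along $T\to\P^1\times\P^1$, or rather to identify the Hodge bundle of $\mathcal X\to T$ directly. The family $\mathcal X\to T$ is obtained by restricting $\mathcal X\to Y$ and then pushing forward along $Y\to T$, but the cleanest route is: the Hodge bundle of $\mathcal X\to T$ at a point $t\in T$ is $H^0(X_t,\omega_{X_t})$, and as $t$ varies this assembles into the line bundle $(\bar\rho_*\omega_{\mathcal X/T})$; by the projection/base-change computation above this is the pullback to $T$ of the fundamental bundle $\mathrm{pr}_1^*\O_{\P^1}(1)$ on $\P^1\times\P^1$ \emph{composed with} the double cover $Y\to \P^1\times\P^1$ and the degree-one map recording the second $\P^1$ coordinate. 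Concretely: the map $T\to\Sym^2(\P^1)\simeq\P^2$ of Proposition~\ref{prop:conic} has image a conic, and the relevant line bundle on $\P^2$ whose pullback computes the Hodge bundle is $\O_{\P^2}(1)$ — the class of the $\P^1$-worth of degree-$2$ covers through a fixed point — restricted to that conic and then pulled back along the degree-$2$ map $T\to(\text{conic})$ coming from $Y\to\P^1\times\P^1$. Wait: $T$ is itself a $\P^1$, and the map $T\to(\text{conic})\simeq\P^1$ induced by $Y\to\P^1\times\P^1$ followed by $\mathrm{pr}_2$ — I would instead argue that $\mathrm{pr}_2\colon\P^1\times\P^1\to T$ restricted to $Y$ is the original $\bar\rho$-compatible map, and $Y\to T$ is a conic bundle degenerating at $4$ points, so the Hodge bundle, being $\mathrm{pr}_1^*\O(1)$ pulled back, restricts on each $\P^1$-fiber trivially, and globalizes to $\O_{\P^1}(d)$ on $T$ where $d$ is read off from the bidegree $(2,2)$ of $Y$.

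To pin down $d=1$ I would use a normalization: the K3 surfaces in the family have $\chi(\O)=2$, and the Hodge bundle on the full period domain has a known relation to $\lambda$; but the concrete computation is to note that $\mathcal X\to T$ degenerates over the $4$ ramification points of the $(2,2)$-curve into $R\cup_E R$, and the degree of the Hodge bundle counts, via Grothendieck–Riemann–Roch or the standard formula $\deg(\bar\rho_*\omega_{\mathcal X/T}) = \tfrac1{12}(\text{number of nodal fibers of the elliptic fibration }\mathcal X\to\P^1\times T\text{ restricted appropriately})$... actually the cleanest: the elliptic fibration $\mathcal X\to \P^1\times T$ has $\omega$-pushforward computed fiberwise as $u_t^*\O_{\P^1}(1)$, and since $u_t$ has degree $2$ and $t$ ranges over a conic meeting $\Delta$ with the appropriate incidence, the resulting line bundle on $T\simeq\P^1$ has degree equal to half the intersection number of the conic $\mathrm{image}(T)$ with the diagonal $\Delta$, i.e. $\tfrac12\cdot 4 = 2$ minus a correction $1$ from... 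I would reconcile this against the known answer $\O_{\P^1}(1)$ by the intersection-theoretic identity $\lambda\cdot[T]$, which the paper uses in \S\ref{subsec:families-of-k3s}.

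The main obstacle is the bookkeeping in the base change: keeping straight whether the Hodge bundle picks up degree from (a) the degree-$2$ cover $Y\to\P^1\times\P^1$, (b) the projection $\mathrm{pr}_2\colon\P^1\times\P^1\to T$, and (c) the nodal/reducible degenerations, without double-counting. I expect the correct bookkeeping to give exactly $\O_{\P^1}(1)$: morally, the Hodge bundle of $R\to\P^1$ is $\O_{\P^1}(1)$, base-changing along a degree-$2$ map $u_t$ multiplies the \emph{fundamental} bundle degree by $2$ (giving a K3), but as a bundle \emph{on $T$} rather than on the fiber $\P^1$, the family is parametrized so that this degree-$2$ scaling is absorbed and what remains is a single copy of $\O_{\P^1}(1)$ reflecting the one-dimensional linear motion of the branch locus. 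The safest self-contained proof is to restrict the family to a general pencil, compute $c_1(\bar\rho_*\omega_{\mathcal X/T})$ by Grothendieck–Riemann–Roch on the elliptic surface $\mathcal X\to\P^1_{\mathrm{fiber}}\times T$, and check that the Todd-class contributions, combined with the $4$ boundary points, yield degree $1$.
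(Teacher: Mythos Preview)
This is not a proof; it is a sequence of exploratory remarks that never settles on a computation. You start with the right observation --- that the relative dualizing sheaf of $\mathcal X\to Y$ is pulled back from the fundamental line bundle $\O(1,0)$ of $R\times\P^1\to\P^1\times\P^1$ --- but then you conflate two different objects: the \emph{fundamental line bundle} of the elliptic fibration (a bundle on $Y$, or fiberwise on $\P^1$) and the \emph{Hodge bundle} of the K3 family (the line bundle $f_*\omega_{\mathcal X/T}$ on $T$). These are related but not the same, and the passage between them is exactly the bookkeeping you flag as ``the main obstacle'' and then do not carry out. The attempted numerology (``$\tfrac12\cdot 4=2$ minus a correction $1$ from\dots'') trails off, the ``Wait:'' indicates you have lost the thread, and the final paragraph only proposes yet another strategy (Grothendieck--Riemann--Roch) without executing it.

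The paper's proof does precisely the computation you are circling. It writes $\omega_{\mathcal X/T}=\omega_{\mathcal X/Y}\otimes\bar\rho^*\omega_Y\otimes f^*\omega_T^{-1}$, identifies $\omega_{\mathcal X/Y}$ with the pullback of $\O(1,0)$ (your correct first step), computes $\omega_Y\cong\O(-1,-1)$ by Hurwitz, and then pushes everything down to $T$ using the projection formula and the standard splitting $\rho_*\O_Y\cong\O\oplus\O(-1,-1)$ for a double cover. The whole thing is a few lines of sheaf algebra and lands cleanly on $\O_T(1)$. Your instinct to track the fundamental bundle through the base change is sound, but you need to actually carry out the pushforward rather than gesture at several possible arguments.
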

\begin{proof}
For simplicity, we write $\O(p,q)$ for the pullback to any of $\mathcal{X}$, $Y$, or $R\times\P^1$ of the sheaf $\O(p,q)=\O(p)\boxtimes\O(q)$ on $\P^1\times\P^1$. Using the Hurwitz formula, $\omega_Y\simeq\rho^*\omega_{\P^1\times\P^1}\otimes\O(1,1)\cong\O(-1,-1)$. By repeated application of the projection formula, we have:
\begin{align*}
    f_*(\omega_{\mathcal X/T}) &\simeq f_*(\omega_{\mathcal X/Y}\otimes h^* \omega_{Y}\otimes f^*\omega_{T}^{-1}) \\
     &\simeq f_*(\bar\rho^*\pr_1^*\omega_{R/\P^1} \otimes h^*\omega_Y) \otimes \omega_T^{-1}\\
     &\simeq f_*(\O(1,0) \otimes \O(-1,-1)) \otimes \O(2)\\
     &\simeq {\pr_2}_*\rho_*h_*h^*(\O(0,-1)) \otimes \O(2)
\end{align*}

Next, we have
\begin{align*}
  \rho_*h_*h^*(\O(0,-1))&\simeq
  \rho_*(h_*\O_{\mathcal X}\otimes \O(0,-1))\\&\simeq
  \rho_*\O(0,-1)\\&\simeq
  \O(0,-1)\otimes\rho_*\O_Y\\&\simeq
  \O(0,-1)\otimes(\O\oplus\O(-1,-1))\\&\simeq
  \O(0,-1)\oplus\O(-1,-2)
\end{align*}
where the second to last isomorphism follows from \cite[Proposition~4.1.6]{lazarsfeld-positivity_1}. Hence,
\begin{align*}
f_*(\omega_{\mathcal X/T})&\simeq
{\pr_2}_*\Big(\O(0,-1)\oplus\O(-1,-2)\Big)\otimes\O(2)\\&\simeq
\Big(\Hm^0(\O)\otimes \O(-1)\oplus\Hm^0(-1)\otimes \O(-2)\Big)
\otimes\O(2)\\&\simeq
\O(1)\qedhere
\end{align*}

\end{proof}

The family $\mathcal X \to T$ has 4 simple normal crossing fibers $R\cup_E R$ that are of Type II in Kulikov's classification \cite{kulikov-k3s} and 24 nodal fibers which are of Type I. Hence, the associated rational period map $j\colon T\dasharrow \mathbb{D}(\Lambda)/\Gamma$ extends to a regular map $j^\II\colon T\to(\mathbb{D}(\Lambda)/\Gamma)^\II$, as described in \S\ref{subsec:nl-theory}.
Let us now compute the resulting quasi-modular form $j^\II_*[T]\cdot\Phi^\II(q)$.

\begin{prop}\label{prop:nl-series-formula}
  Let $\alpha = j^\II_*[T] \in \Hm_2((\D(\Lambda)/\Gamma)^\II,\Q)$. Then the power series
  \[ \varphi(q) \defeq \Phi^\II(q) \cdot \alpha = -\lambda\cdot \alpha + \sum_{n\geq 1} [\overline{\NL}_n]\cdot \alpha\, q^n \]
  is given by the following quasi-modular form of weight 10:
  \[\varphi(q) = -\frac{1}{3}\eis_2 \eis_4^2 - \frac{2}{3} \eis_4\eis_6.\]
\end{prop}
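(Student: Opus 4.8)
The plan is to determine the weight-$10$ quasi-modular form $\varphi(q)$ from just its first two Fourier coefficients. By Theorem~\ref{thm:quasimod}, $\varphi\in\QMod(\SL_2(\Z),10)$, and because we have used the \emph{Type II} partial compactification, Greer's construction \cite{greer-quasimod-from-mixed} guarantees that $\varphi$ has depth at most one: $\varphi = f_0 + \eis_2 f_1$ with $f_0\in\Mod(\SL_2(\Z),10)=\Q\,\eis_4\eis_6$ and $f_1\in\Mod(\SL_2(\Z),8)=\Q\,\eis_4^2$. Thus $\varphi = a\,\eis_4\eis_6 + b\,\eis_2\eis_4^2$ for scalars $a,b\in\Q$, and it suffices to compute the $q^0$- and $q^1$-coefficients of $\varphi$, which by definition are $-\lambda\cdot\alpha$ and $[\overline{\NL}_1]\cdot\alpha$.

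For the constant term, the projection formula gives $\lambda\cdot\alpha = (j^\II)^*\lambda\cdot[T] = \deg_{\P^1}\!\big((j^\II)^*\lambda\big)$. The bundle $(j^\II)^*\lambda$ is the Hodge line bundle $f_*\omega_{\mathcal X/T}$ of the family: the $24$ nodal fibers give $A_1$-degenerations with finite monodromy, and the four $R\cup_E R$ fibers are semistable, so in neither case does the canonical extension of the Hodge bundle differ from $f_*\omega_{\mathcal X/T}$. By Proposition~\ref{prop:hodgedegree} this bundle is $\O_{\P^1}(1)$, so the constant term of $\varphi$ equals $-1$; that is, $a+b=-1$.

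The main work is the coefficient of $q$, for which I claim $[\overline{\NL}_1]\cdot\alpha = 24$. One first locates the $t\in T$ for which $X_t$ --- or its Brieskorn resolution, using Remark~\ref{rmk:brieskorn} --- lies in $\NL_1$, i.e.\ acquires an algebraic class of square $-2$ orthogonal to the polarizing lattice $U=\langle F,Z\rangle$. If $X_t$ is smooth then all of its fibers over $\P^1$ are irreducible (being base changes of fibers of $R$), so by Shioda--Tate $\NS(X_t)\cap U^\perp=\ZF^\perp(X_t)$ consists precisely of the classes $\Pi(\Sigma)$ of sections $\Sigma$ of $\pi'$; the K3 analogue of Proposition~\ref{prop:sec-proj-formula} (now $Z^2=-2$) gives $\Pi(\Sigma)^2 = -2h-4 \le -4$, where $h = \Sigma\cdot Z\ge0$ is the height. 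In particular (via Proposition~\ref{prop:galois}) the ``extra'' sections arising from bisections of $R$ with branch divisor matching $u_t$ never produce a $(-2)$-class, so no smooth fiber lies in $\NL_1$. The contributions therefore come only from the $24$ singular fibers --- the $12$ critical values of $\pi\colon R\to\P^1$ each pull back to $2$ values of $t$ under the generic degree-$2$ cover $u_t$ --- at each of which $X_t$ has a single node whose exceptional $(-2)$-curve, being the non-identity component of the resulting $I_2$ fiber of the resolution, is orthogonal to both $F$ and $Z$ and hence has square $-2$ in $U^\perp$. For general $R$ and general $(2,2)$-curve these $24$ points are distinct, $j^\II$ meets $\NL_1$ transversally at each, and $j^\II(T)$ misses $\overline{\NL}_1$ over the four Type~II boundary points by a dimension count ($\overline{\NL}_1$ contains no boundary divisor, since the generic Type~II K3 still has Mordell--Weil lattice $\E8(-2)$, of minimal norm $-4$). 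Hence $[\overline{\NL}_1]\cdot\alpha = 24$, i.e.\ $-264a+456b=24$.

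Solving $a+b=-1$ and $-264a+456b=24$ gives $a=-\tfrac23$ and $b=-\tfrac13$, which is the claimed identity. I expect the local/global input in the third paragraph --- multiplicity one at each of the $24$ nodal fibers, and vanishing of the Type~II boundary contribution --- to be the delicate point; a cross-check that also clarifies the source of the $\eis_2$-term is that, by Ramanujan's identity \eqref{eq:ramanujan}, the assertion is equivalent to $\varphi + \eis_4\eis_6 = -\tfrac12\,D(\eis_4^2)$, the right side being exactly the Type~II boundary correction of \cite{greer-quasimod-from-mixed} paired against $\alpha$, with the transverse lattice $J^\perp/J\cong\E8(-1)^{\oplus2}$ of the $R\cup_E R$ degenerations contributing its theta function $\eis_4^2$ and $D$ raising weight $8$ to the expected weight $10$.
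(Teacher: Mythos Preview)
Your overall strategy is sound and the linear algebra is correct, but it differs from the paper's proof in the second datum used to pin down $\varphi$ inside the two-dimensional space $\Q\eis_4\eis_6\oplus\Q\eis_2\eis_4^2$. The paper invokes Theorem~34 of \cite{greer-quasimod-from-mixed} in the form
\[
\varphi(q)=-(\lambda\cdot\alpha)\,\eis_4\eis_6-\tfrac{1}{8}(\Delta^{\II}\cdot\alpha)\,D(\eis_4^2),
\]
and then simply reads off $\Delta^{\II}\cdot\alpha=4$ from the four snc fibers $R\cup_ER$ of $\mathcal X\to T$. You instead compute the $q^1$-coefficient $[\overline{\NL}_1]\cdot\alpha=24$ directly. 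Your cross-check in the last paragraph is precisely the paper's argument; once you grant that formula with $\Delta^{\II}\cdot\alpha=4$, the rest is Ramanujan's identity, and no analysis of $\overline{\NL}_1$ is needed at all.

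Your direct route is legitimate but the third paragraph has real gaps. First, the transversality of $j^{\II}$ with $\overline{\NL}_1$ at each of the $24$ nodal points is asserted, not proved; the paper establishes exactly this later (Proposition~\ref{prop:component-decomp}, second case) by observing that the exceptional curve in the small resolution has normal bundle $\O(-1)^{\oplus2}$ and is therefore infinitesimally rigid. Second, and more seriously, your argument that the four Type~II boundary points contribute nothing is muddled: the phrase ``generic Type~II K3'' does not parse (boundary points are not K3 surfaces), and $\overline{\NL}_1$ \emph{does} meet the toroidal boundary, in hypertori indexed by $(-2)$-vectors $v\in J^\perp$ (and $J^\perp/J\cong\E8(-1)^{\oplus2}$ has plenty of those). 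What you actually need is that the four \emph{specific} limiting periods of the $R\cup_ER$ fibers avoid these hypertori, which requires either an explicit limiting-mixed-Hodge-structure computation or a genericity argument showing that varying the $(2,2)$-curve moves those four boundary images freely. The paper sidesteps this entirely by using $\Delta^{\II}\cdot\alpha$ instead of $[\overline{\NL}_1]\cdot\alpha$, which is the cleaner choice: four transverse boundary crossings are immediate, whereas your $24$ requires the whole machinery of \S\ref{subsubsec:computing-numbers}.
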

\begin{proof}
  Since the degenerations in our family $\mathcal X \to T$ are of Type II, we are in the situation of \cite{greer-quasimod-from-mixed}. Specifically, the holomorphic anomaly equation in Theorem~34 of \emph{loc. cit.} implies that $\varphi(q)$ is at most linear in $E_2$, and thus lies in the 2-dimensional subspace of $\QMod(\SL_2(\Z),10)$ spanned by $\eis_2\eis_4^2$ and $\eis_4\eis_6$. It is uniquely determined by the values $\lambda\cdot\alpha$ and $\Delta^\II\cdot\alpha$, where $\Delta^{\II}\subset (\D(\Lambda))^{\II}/\Gamma$ is the boundary divisor of the compactification:
  \[\varphi(q) = -(\lambda'\cdot \alpha) \eis_4\eis_6 - \frac{1}{8}(\Delta'\cdot \alpha)D(\eis_4^2).\]
  We have $\Delta^{\II}\cdot \alpha=4$, and $\lambda\cdot \alpha=1$ by Proposition \ref{prop:hodgedegree}. This gives the desired expression by the Ramanujan identity \eqref{eq:ramanujan} on page~\pageref{eq:ramanujan}.
\end{proof}

\section{Bounding the degree of $f_L$}\label{sec:bounding}
We will now relate the Noether-Lefschetz coefficients of the quasi-modular form $\varphi(q)$ computed in the last section to the genera of Severi curves $V(L)$ and thence prove Theorem~\ref{thm:genusbound}. Recall our main construction from \S\ref{subsec:families-of-k3s}: we fixed a general rational elliptic surface $R\to\P^1$, and we produced a family $\mathcal X \to T\simeq \P^1$ of K3 surfaces, each member of which is a double cover of $R$ branched at two elliptic fibers.

The key observation is that we now have two ways of counting the number of surfaces in the family $\mathcal{X}$ with jumping Picard rank.
On the one hand, these counts are by definition given by the Noether-Lefschetz numbers.
On the other hand, we know by Proposition~\ref{prop:galois} that $X_t$ has an extra section precisely when the corresponding double cover $u_t\colon\P^1\to\P^1$ has branch locus agreeing with that of some rational bisection $B\subset R$.
In other words, Picard jumping will occur precisely at the intersection points in $\Sym^2(\P^1)\cong\P^2$ of the maps $f_L\colon\wt V(L)\to\Sym^2(\P^1)$, for varying bisection line bundles $L$, with the fixed conic $T\to\Sym^2(\P^1)$.
We thus relate the degree of the plane curve $f_L\left(\wt{V}(L)\right)$, and hence its genus, with the Noether-Lefschetz numbers.

The precise relationship is complicated somewhat by the presence of certain correction terms in the Noether-Lefschetz numbers.
However, as we prove in \S\ref{subsec:geom-interp}, these terms are dominated asymptotically by the terms of interest.

Lastly, to deduce Theorem~\ref{thm:genusbound}, we show that the map $f_L\colon\wt V(L)\to\Sym^2(\P^1)$ is always birational onto its image, so that we can compute the genus of $V(L)$ in terms of the degree of the cycle-theoretic image of $f_L$.
This is carried out in \S\ref{subsec:mainthm}.

\subsection{Geometric interpretation of Noether-Lefschetz numbers}\label{subsec:geom-interp}
For reference, let us record the first few coefficients of the quasi-modular form $\varphi(q)$, which we determined in Proposition~\ref{prop:nl-series-formula}:
\[\varphi(q) = -1+24 q + 73512 q^2 + 3621216 q^3+\cdots\]
The constant coefficient $-1$ is the degree of the Hodge bundle of $\mathcal{X}\to T$, as computed in Proposition~\ref{prop:hodgedegree}.
The linear coefficient $24$ corresponds to the number of nodal surfaces in the family $\mathcal{X}\to T$.
This comes from the intersection of the conic $T\to\Sym^2(\P^1)$ with the 12 lines in $\Sym^2\P^2$ corresponding to the critical points of $R\to\P^1$.

The remainder of this section will be concerned with proving the following:
\begin{prop}\label{prop:degrees-are-modular}
  The quasi-modular form $\varphi(q)$ is given by
  \[
    -1+\sum_{n\ge0}\left(4\sum_L\deg\left({f_L}_*[\wt V(L)]\right)\right)q^{n+2}+\psi(q),
  \]
  where $\psi(q)=\psi_\ex(q)+\psi_\no(q)$ is a sum of two modular forms of lower weight, and $L$ in the sum ranges over bisection line bundles on $R$ of height $n$.
  We will determine $\psi_\ex$ and $\psi_\no$ explicitly.
\end{prop}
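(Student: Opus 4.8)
The plan is to re-express the cycle-valued quasi-modular form $\varphi(q)=j^\II_*[T]\cdot\Phi^\II(q)$ geometrically, by analyzing what the intersection number $[\overline{\NL}_n]\cdot\alpha$ counts. The first step is the set-theoretic identification already foreshadowed in the text: by Proposition~\ref{prop:galois}, a point $t\in T$ with $X_t$ having Picard rank $\ge 3$ (equivalently $X_t\in\NL_n$ for some $n$) corresponds exactly to a point where the conic $T\subset\Sym^2(\P^1)$ meets the image of some branch map $f_L\colon\wt V(L)\to\Sym^2(\P^1)$; and the lattice-theoretic data of the extra class on $X_t$ determines both the height $n$ and the norm of $\Pi(L)\in\E8(-1)$. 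Using Proposition~\ref{prop:bisec-proj-formula}, a bisection of height $n$ and genus $g$ has $\Pi(B)^2=2g-4n-4$; pulling back to the K3 double cover doubles this quantity (the pullback of $\Pi(L)$ lies in the $\E8(-2)$-summand coming from $U\subset U\oplus\E8(-2)$), so the resulting extra class on $X_t$ has norm $-2(2g+4)+\cdots$, which pins down the index $n$ of the Noether-Lefschetz divisor containing $t$. The key bookkeeping claim is then that, up to the two correction terms, the coefficient of $q^{n+2}$ in $\varphi(q)$ equals $4$ times $\sum_L \deg\left({f_L}_*[\wt V(L)]\right)$ over bisection line bundles $L$ of height $n$. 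The factor $4$ arises because $T\to\Sym^2(\P^1)$ is a conic, so $T\cdot f_L(\wt V(L))=2\deg f_L(\wt V(L))$ by Bézout, and there is a further factor of $2$ from the $\mu_2$-gerbe structure of $\overline{\M}_0(\P^1,2)$ over $\Sym^2(\P^1)$ — equivalently, each plane-curve intersection point is hit by $\NL_n$ with multiplicity two because of the two square roots in the double cover $u_t$. (I would verify the precise constant by checking the $g=0$, $n=0$ case against the known rational quartic: there $\sum_L\deg f_L=4$, and the $q^2$-coefficient of $\varphi$ should be $4\cdot 4+\text{corrections}$, consistent with the stated $73512$ once the correction terms are included.)

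Next I would isolate the two correction terms. The first, $\psi_\ex$, is an \emph{excess-intersection} contribution: we chose the unimodular lattice $U$ as the polarizing lattice rather than the full generic Néron–Severi lattice of $X_t$ (which is $U\oplus\E8(-2)$ for a generic fiber), per the Remark in \S\ref{subsec:nl-theory}, so the Noether–Lefschetz divisors $\NL_n$ on $\D(\Lambda)/\Gamma$ pick up parasitic intersection along the locus where $X_t$ already contains the $\E8(-2)$-sublattice coming from sections of $\pi$ base-changed via $u_t$ (case (i) of Proposition~\ref{prop:galois}). These base-changed sections are indexed by elements of $\E8(-1)\cong\ZF^\perp(R)$, and a section of height $m$ on $R$ contributes, after base change and the norm-doubling, a fixed shift in the index $n$; summing $q$-powers over all of $\E8$ produces (a multiple of a shift of) the theta series $\eis_4(q)$, and the weight is correspondingly lowered. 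The second, $\psi_\no$, is the \emph{nodal} correction coming from the singular fibers of $\mathcal X\to T$: the 24 nodal $X_t$ and the 4 Type~II reducible fibers $R\cup_E R$ each register a Picard jump (see Remark~\ref{rmk:brieskorn}) that is not of the form "extra Galois-conjugate section pair," so they must be subtracted; their generating function is governed by the Jacobi theta constant $\theta(q)$ introduced in \S\ref{subsec:qmod-forms}, again of lower weight. I would make each of these precise by a local analysis at the corresponding boundary/nodal points, matching $q$-exponents via the height/norm dictionary above, and then read off $\psi_\ex$ and $\psi_\no$ as explicit modular forms.

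Finally, assembling the pieces: $\varphi(q)+\lambda\cdot\alpha = \sum_n\left([\overline{\NL}_n]\cdot\alpha\right)q^n$, and the previous two steps decompose each $[\overline{\NL}_n]\cdot\alpha$ as $4\sum_L\deg\left({f_L}_*[\wt V(L)]\right)$ (over height-$n$ bisections, contributing to $q^{n+2}$ after the index shift) plus the $q^n$-coefficients of $\psi_\ex$ and $\psi_\no$; collecting terms and using $\lambda\cdot\alpha=1$ from Proposition~\ref{prop:hodgedegree} gives exactly the asserted formula with $\psi=\psi_\ex+\psi_\no$. The main obstacle I anticipate is \emph{multiplicity bookkeeping}: proving that each transverse intersection point of $T$ with $f_L(\wt V(L))$ contributes with exactly the expected multiplicity to $[\overline{\NL}_n]\cdot\alpha$ — i.e., that the period map $j^\II$ is suitably transverse to the Noether–Lefschetz divisors — which is precisely where Theorem~\ref{thm:reducedseveri} (reducedness of $V(L)$, and the 1-dimensional equisingular deformation space) enters, as the text hints at the end of \S\ref{subsec:moduli_of_rl}. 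One must also carefully separate the honest contributions (case (ii) of Proposition~\ref{prop:galois}) from the excess ones (case (i)) inside a single $\NL_n$, since both can occur at the same index $n$; this separation, and the verification that the leftover truly organizes into the claimed lower-weight modular forms rather than something worse, is the delicate heart of the argument.
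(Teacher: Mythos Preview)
Your high-level strategy matches the paper's: decompose $[\overline\NL_n]\cdot\alpha$ into the Severi contribution, an excess term $\psi_\ex$ from the $\E8(-2)$ sublattice present on every $X_t$, and a nodal term $\psi_\no$; and invoke Theorem~\ref{thm:reducedseveri} for transversality. But three steps are either missing or misidentified.

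First, the separation of excess from honest contributions is more delicate than you indicate. The components of $\overline{\NL}_n$ containing $j^\II(T)$ cannot be distinguished from the transverse ones \emph{globally} on $(\D(\Lambda)/\Gamma)^\II$; they only separate in a neighborhood of $j^\II(T)$. The paper therefore passes to a neat congruence cover $(\D(\Lambda)/\Gamma_p)^\II$ (so the ambient is a smooth manifold), lifts $T$ via Brieskorn simultaneous resolution to a curve $\wt T$, and works in a tubular neighborhood $U$ of $\tildej(\wt T)$. Only there is $\overline{\NL}_{p,n}\cap U$ exhibited as a union of smooth submanifolds $\overline{M_v/G}$ (indexed by $v\in\Lambda_R$, each containing $\tildej(\wt T)$) and transverse pieces $N_\alpha$. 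Without this local setup, your decomposition is only heuristic.

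Second, you have the mechanism and sign of $\psi_\ex$ wrong. It is not a positive theta sum: each $\overline{M_v/G}$ has normal bundle $\lambda^\vee$, so by Fulton's excess intersection formula it contributes $-(\lambda\cdot\alpha)=-1$ (after dividing out the cover degree). Summing over nonzero $v\in\Lambda_R\cong\E8(-2)$ of norm $-2n$ gives $\psi_\ex(q)=-\eis_4(q^2)+1$, with a crucial minus sign that your sketch does not produce.

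Third, the four Type~II fibers $R\cup_ER$ do \emph{not} belong in $\psi_\no$: they map to the boundary of $(\D(\Lambda)/\Gamma)^\II$ and are already absorbed into the quasi-modularity of $\Phi^\II$. The term $\psi_\no$ instead counts, at each of the $24$ nodal surfaces, the infinite family of classes $s_v+ke$ with $v\in\E8(-2)$, $k\ne0$, and $e$ the exceptional curve of the resolution; this yields the product $12(\theta(q)-1)\cdot\eis_4(q^2)$, not a bare $\theta$-series. Transversality at these nodal points is established by a separate argument (rigidity of $e$ in the small resolution, via its normal bundle $\O(-1)^{\oplus2}$), not through Theorem~\ref{thm:reducedseveri}.
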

The coefficients $4\sum_L\deg\left(f_{L*}[\wt V(L)]\right)$ are what we expect for the numbers $[\overline{\NL}_n]\cdot\alpha$, given the relationship between the Noether-Lefschetz special surfaces in the family $\mathcal{X}$ (that is, those of Picard rank $>2$) and the Severi curves of bisection line bundles on $R$.
The two factors of 2 come from the fact that $T\subset \P^2$ is a conic, and that each intersection point gives two Galois-conjugate sections of the corresponding K3 surface.

The series $\psi(q)$ is a correction to this expectation coming from two sources, corresponding to the modular forms $\psi_{\ex}(q)$ and $\psi_\no(q)$:
\begin{enumerate}
\item The {\it excess contribution} comes from the fact that \emph{every} surface in the family $\mathcal{X}$ is Noether-Lefschetz special; since every $X_t$ is a base change of $R$, they will all have an extra curve class orthogonal to $\ZF(X)$ for every element of $\ZF^\perp(R)\cong\E8(-1)$.
This means that the image $j^{\II}(T)\subset(\D(\Lambda)/\Gamma)^{\II}$ actually lies entirely inside the Noether-Lefschetz cycle $\overline\NL_n$, for $n$ even, and the intersection number $[\overline{\NL}_n]\cdot\alpha$ can be computed using Fulton's excess intersection formula, and then assembled to the series $\psi_{\ex}(q)$.
\item The {\it nodal contribution} comes from the fact that, aside from having an extra section, an elliptic surface can also have jumping Picard rank by having a reducible fiber.
As mentioned in Remark~\ref{rmk:brieskorn}, this will occur when a member of our family $\mathcal{X}$ acquires a node.
The extra algebraic cycles on these nodal surfaces will be counted by $\psi_\no(q)$.
\end{enumerate}

To separate the contributions of $\psi_\ex$ and $\psi_\no$ to $\varphi$, we would like to decompose each $\overline\NL_n$ into separate components, whose intersections with $j^\II(T)$ give $\psi_\ex$, $\psi_\no$, and $\varphi-\psi_\ex-\psi_\no$, respectively.
Unfortunately, we cannot do this directly, since these components can only be distinguished in a small neighborhood of $j^\II(T)$.
A second difficulty is that the space $(\D(\Lambda)/\Gamma)^\II$ is singular, albeit smooth as an orbifold. To make the required topological intersection arguments, we will pass to a smooth finite cover.

\subsubsection{Passing to a finite cover}
Choose a neat congruence subgroup $\Gamma_p\subset\Gamma$.
The singular variety $\D(\Lambda)/\Gamma$ has a finite branched cover from the smooth variety $\D(\Lambda)/\Gamma_p$, which admits a smooth partial toroidal compactification $(\D(\Lambda)/\Gamma_p)^\II$.
By Proposition~\ref{prop:partial-toroidal-functorial}, we have an induced map $\pi\colon(\D(\Lambda)/\Gamma_p)^\II\to(\D(\Lambda)/\Gamma)^\II$.
\begin{prop}
  The action of $\Gamma/\Gamma_p$ on $\D(\Lambda)/\Gamma_p$ extends to $(\D(\Lambda)/\Gamma_p)^\II$, with quotient $\pi\colon(\D(\Lambda)/\Gamma_p)^\II\to(\D(\Lambda)/\Gamma)^\II$.
\end{prop}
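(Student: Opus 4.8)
The plan is to build the $\Gamma/\Gamma_p$-action directly from the local models that define the Type II partial toroidal compactification, exploiting the fact -- recalled in \S\ref{subsec:lattice-polarized} -- that over the open modular curves this compactification involves \emph{no} choice of admissible fan and is produced by a construction functorial for the action of $O^+(\Lambda)$ on $\D(\Lambda)$. Since $\Gamma_p$ is normal in $\Gamma$, conjugation already makes $\Gamma$ act on $\D(\Lambda)/\Gamma_p$ through $\Gamma/\Gamma_p$; the content of the proposition is to extend this across the boundary and to recognize the quotient. First I would recall that the boundary of $(\D(\Lambda)/\Gamma_p)^\II$ is a union of divisors $\mathcal D_J$, one per $\Gamma_p$-orbit of a rank $2$ primitive isotropic sublattice $J\subset\Lambda$, with $\mathcal D_J$ a fiber bundle over $Y_J=(\P^1(J_\C)\smallsetminus\P^1(J_\R))/p(\Stab_{\Gamma_p}(J))$ whose fiber over $\tau$ is $(E_\tau\otimes_\Z(J^\perp/J))/q(\Stab_{\Gamma_p}(J))$.

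Next I would define the action on the boundary. An element $g\in\Gamma$ is an isometry of $\Lambda$ carrying $J$ to $gJ$, $\Stab_{\Gamma_p}(J)$ to $\Stab_{\Gamma_p}(gJ)$, and $J^\perp/J$ to $(gJ)^\perp/(gJ)$, hence induces an isomorphism $\mathcal D_J\toi\mathcal D_{gJ}$ intertwining the $p$- and $q$-level data; replacing $g$ by $g\gamma$ with $\gamma\in\Gamma_p$ changes this by the action of $\gamma$, which has already been quotiented out, so the isomorphism depends only on $[g]\in\Gamma/\Gamma_p$. To see that these glue with the conjugation action on the interior into a biholomorphism of the whole compactification, I would pass to the AMRT/Mumford local model near $\mathcal D_J$: a neighborhood of it is cut out of a torus embedding of a torus bundle over the partial quotient $\D(\Lambda)/U(J)$ (with $U(J)$ the unipotent radical of $\Stab(J)$), the embedding being given by the canonical ray in the Type II case, and then quotiented by $\Stab_{\Gamma_p}(J)/U(J)$. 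The isometry $g$ matches this entire local model at $J$ with the one at $gJ$ -- the partial quotient, the torus bundle, the canonical ray, the residual group -- and agrees with conjugation by $g$ on overlaps with the interior; so each $[g]$ is a biholomorphism of $(\D(\Lambda)/\Gamma_p)^\II$ and these compose correctly, giving the extended action.

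To identify the quotient, I would use that the map $\pi\colon(\D(\Lambda)/\Gamma_p)^\II\to(\D(\Lambda)/\Gamma)^\II$ of Proposition~\ref{prop:partial-toroidal-functorial} is finite, surjective, and $\Gamma/\Gamma_p$-invariant (the identity and $g$ induce the same map to the $\Gamma$-quotient), so it factors through a finite surjective morphism $\bar\pi\colon(\D(\Lambda)/\Gamma_p)^\II/(\Gamma/\Gamma_p)\to(\D(\Lambda)/\Gamma)^\II$. On the open parts $\bar\pi$ is the classical identification $(\D(\Lambda)/\Gamma_p)/(\Gamma/\Gamma_p)=\D(\Lambda)/\Gamma$; on the boundary, $\Gamma/\Gamma_p$ permutes the $\mathcal D_J$ with orbit set the $\Gamma$-orbits of rank $2$ isotropic $J$, and the stabilizer $\Stab_\Gamma(J)/\Stab_{\Gamma_p}(J)$ of $\mathcal D_J$ acts on it with quotient the corresponding boundary divisor of $(\D(\Lambda)/\Gamma)^\II$, as one checks by comparing the $p$-, $q$- and unipotent-level descriptions. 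Hence $\bar\pi$ is bijective. Since $\Gamma_p$ is neat, $(\D(\Lambda)/\Gamma_p)^\II$ is smooth, so its quotient by the finite group $\Gamma/\Gamma_p$ is normal, and $(\D(\Lambda)/\Gamma)^\II$ is normal too; a finite bijective morphism between normal varieties is an isomorphism by Zariski's main theorem, so $\bar\pi$ is an isomorphism.

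The step I expect to be the main obstacle is promoting the set-theoretic action near the boundary to a genuine morphism of analytic spaces: this requires unwinding the local toroidal model and verifying that $g$ respects each ingredient (partial quotient, torus-bundle structure, canonical cone, residual group), and it is exactly here that the two hypotheses earn their keep -- neatness of $\Gamma_p$ makes the local models unambiguous and smooth, while restricting to the \emph{open} Type II strata eliminates any choice of fan so that the construction is literally $g$-functorial. A secondary bookkeeping point, harmless but worth noting, is that $\Stab_\Gamma(J)$ need not be neat, so $\bar\pi$ genuinely ramifies along the boundary and the extended $\Gamma/\Gamma_p$-action is not free there; this does not affect the argument but must be tracked in the quotient computation.
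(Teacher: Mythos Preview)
Your argument is correct, but the paper's own proof is simply a one-line citation: ``This follows from \cite[Lemma~2.6]{harris-functorial}.'' So the comparison is not between two different mathematical strategies but between invoking a general functoriality result for toroidal compactifications versus working it out by hand in this specific setting.

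What you do differently is unpack precisely what that lemma would say in the Type~II case: you exhibit the action on each boundary divisor $\mathcal D_J$ via the $O^+(\Lambda)$-equivariance of the data $(J,J^\perp/J,\Stab_{\Gamma_p}(J))$, check compatibility with the interior using the local torus-embedding model (where the absence of any fan choice over the open modular curves is the key point), and then identify the quotient by a Zariski Main Theorem argument. This buys self-containment and makes transparent why neatness of $\Gamma_p$ and the restriction to Type~II strata are exactly the hypotheses needed. The paper's approach buys brevity and places the result in its natural generality, at the cost of sending the reader elsewhere. Both are fine; your version would serve well as an expanded proof if the cited lemma were unavailable or if one wanted the paper to stand on its own at this point.
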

\begin{proof}
  This follows from \cite[Lemma~2.6]{harris-functorial}.
\end{proof}

The map $j^\II\colon T\to(\D(\Lambda)/\Gamma)^\II$ does not lift to a map $T\to[\D(\Lambda)/\Gamma]^\II$ to the compactified quotient stack.
This is because in order to define the period map $j^\II$ at the nodal fibers $X_t$ of $\mathcal{X}$, we must resolve the fiber to obtain a smooth K3.
However, after taking the double cover $T^{(1)}\to T$ branched at these basepoints, we can form the Brieskorn simultaneous resolution family $\mathcal{X}^{(1)}\to T^{(1)}$ and hence a lift $j^\II_{(1)} \colon T^{(1)}\to[\D(\Lambda)/\Gamma]^\II$. Now form the base change
\[
  \begin{tikzcd}
    \wt T\pb\ar[r, "\tildej"]\ar[d]&(\D(\Lambda)/\Gamma_p)^\II\ar[d]\\
    T^{(1)}\ar[r, "j^\II_{(1)}"]\ar[d, "2:1"']&{[\D(\Lambda)/\Gamma]}^\II\ar[d]\\
    T\ar[r, "j^\II"]&(\D(\Lambda)/\Gamma)^\II.
  \end{tikzcd}
\]
Since $(\D(\Lambda)/\Gamma_p)^\II\to[\D(\Lambda)/\Gamma]^\II$ is an étale morphism of stacks, $\wt T$ is an étale cover of $T^{(1)}$ of degree $\abs{\Gamma/\Gamma_p}$, and hence $\pi_T\colon\wt T\to T$ is a degree $d \defeq 2\abs{\Gamma/\Gamma_p}$ branched cover.

We then have $d\cdot [T]=(\pi_T)_*[\wt T]$, so $d\cdot j^\II_*[T]=\pi_*(\tildej_*[\wt T])$.
There are Noether-Lefschetz cycles $[\overline{\NL}_{p,n}]$ in $(\D(\Lambda)/\Gamma_p)^\II$, defined in the same way as those in $(\D(\Lambda)/\Gamma)^\II$, such that $[\overline{\NL}_{p,n}]=\pi^*[\overline{\NL}_n]$. It follows that the desired intersection numbers $j^\II_*[T]\cdot[\overline{\NL}_n]$ can be computed as $\frac{1}{d}\cdot\tildej_*[\wt T]\cdot[\overline{\NL}_{p,n}]$.

\subsubsection{Intersecting with a small neighborhood}
Choose a small neighborhood $U$ of $\tildej(\wt T)$ in $(\D(\Lambda)/\Gamma_p)^\II$.
We will now show that $\overline{\NL}_{p,n}\cap U$ is a union of pairwise-transverse complex submanifolds, each of which either contains $\tildej(\wt T)$ or is transverse to it.

Let $\wt{\mathcal{X}}\to \wt{T}$ be the base change of $\mathcal{X}\to T$ to $\wt T$, and let $\wt{\mathcal{X}}_{\reg}\to\wt T_\reg$ be the restriction to the regular locus, i.e., those $t\in\wt T$ with $\wt X_t$ a smooth K3 surface.
As shown in \S\ref{subsec:families-of-k3s}, the complement of $\wt{\mathcal{X}}_{\reg}$ consists both of Type II degenerations (surfaces of the form $R\cup_ER$) and nodal surfaces, but by simultaneous resolution, we can replace the nodal surfaces with smooth ones so that the complement of $\wt{\mathcal{X}}_{\reg}$ only includes the Type II degenerations.

Consider the local systems $\set{\Lambda_R^t}_{t\in\wt T_\reg}\subset\set{\Lambda^t}_{t\in\wt T_\reg}\subset\set{\Hm^2(\wt X_t)}_{t\in\wt T_\reg}$ defined as follows.
Let $\Lambda^t$ be the orthogonal complement of the zero section and fiber class inside $\Hm^2(\wt X_t)$.
Next, recalling that $X_t\to \P^1$ is pulled back from $R\to\P^1$ via a double cover $u_t\colon\P^1\to\P^1$, we let $\Lambda_R^t\subset\Hm^2(X_t)$ consist of the pullbacks of classes in $\ZF^\perp(R)$, so that $\Lambda_R^t\cong\E8(-2)$.

\begin{prop}
    The local system $\set{\Lambda_R^t}_{t\in\wt T_\reg}$ is trivial.
\end{prop}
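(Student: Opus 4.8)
The plan is to realize the local system $\{\Lambda_R^t\}_{t\in\wt T_\reg}$ as the image, under a morphism of local systems, of a \emph{constant} local system; this forces it to be trivial. The geometric content is merely that the rational elliptic surface $R$ is fixed throughout the family $\wt{\mathcal X}_\reg\to\wt T_\reg$, so the classes comprising $\Lambda_R^t$ are pulled back from something that does not move.

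First I would use the morphism $h\colon\mathcal X\to R\times\P^1$ appearing in diagram~\eqref{eq:family-of-k3-diag} and note that, identifying $T$ with the second factor $\P^1$ via $\pr_2$, it is a morphism over $T$, with $R\times\P^1\to T$ the \emph{constant} family with fiber $R$. Base-changing along $\wt T\to T$ and restricting to $\wt T_\reg$ --- over whose finitely many once-nodal points $\wt X_t$ has been replaced by its Brieskorn simultaneous resolution in the construction of $\wt{\mathcal X}_\reg$ --- this upgrades to a morphism $\wt h\colon\wt{\mathcal X}_\reg\to R\times\wt T_\reg$ of families over $\wt T_\reg$, which on the fiber over $t$ is the map $h_t\colon\wt X_t\to R$ resolving the double cover $u_t'$ of \eqref{square}. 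Applying $\mathrm R^2(-)_*\Z$ yields a morphism of local systems
\[
  \phi\colon\underline{\Hm^2(R;\Z)}\longrightarrow \mathrm R^2(\wt{\mathcal X}_\reg/\wt T_\reg)_*\Z,\qquad \phi_t=h_t^*,
\]
whose source is constant because $R\times\wt T_\reg\to\wt T_\reg$ is a constant family.

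Next I would observe that $\ZF^\perp(R)\subset\Hm^2(R;\Z)$ is intrinsic to the fixed surface $R$, hence is a constant sub-local-system $\underline{\ZF^\perp(R)}$ of the source of $\phi$, and that by construction the image of $\phi|_{\underline{\ZF^\perp(R)}}$ is exactly $\{\Lambda_R^t\}_{t\in\wt T_\reg}$. Triviality is then immediate from the definition of a morphism of local systems: for a loop $\gamma$ in $\wt T_\reg$ based at $t_0$, compatibility with parallel transport gives $P_\gamma\circ\phi_{t_0}=\phi_{t_0}\circ P^{\mathrm{const}}_\gamma=\phi_{t_0}$, the transport $P^{\mathrm{const}}_\gamma$ in the constant source being the identity; hence the monodromy operator $P_\gamma$ of $\mathrm R^2(\wt{\mathcal X}_\reg/\wt T_\reg)_*\Z$ fixes $\Lambda_R^{t_0}=\phi_{t_0}(\ZF^\perp(R))$ pointwise, so $\{\Lambda_R^t\}$ has trivial monodromy. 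In passing, injectivity of each $h_t^*$ on rational cohomology --- it is induced by a resolution of a finite surjective morphism of surfaces --- together with the projection formula $h_t^*a\cdot h_t^*b=2\,(a\cdot b)$ shows $\Lambda_R^t\cong\ZF^\perp(R)(2)\cong\E8(-2)$, consistent with the lattice already asserted above.

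The step I expect to require the most care is checking that $\wt h$ genuinely defines a morphism of families over \emph{all} of $\wt T_\reg$, in particular over the resolved once-nodal fibers, and restricts there to the expected map. This should follow because the Brieskorn simultaneous resolution is obtained from $\mathcal X$ by a base change along $\wt T\to T$ followed by a small birational modification, both of which are compatible with the projection to $R\times\wt T_\reg$; granting this, the remainder of the argument is purely formal.
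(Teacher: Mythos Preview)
Your argument is correct and rests on the same geometric fact as the paper's: the classes in $\Lambda_R^t$ are pulled back along the global map $\mathcal X\to R\times\P^1$ and are therefore monodromy-invariant. The packaging differs slightly. The paper first observes that the local system on $\wt T_\reg$ is (the extension of) the pullback of the corresponding local system on $T_\reg$, then uses that $T\simeq\P^1$ is simply connected so that $\pi_1(T_\reg)$ is generated by small loops around the punctures, and finally invokes the easy direction of the Invariant Cycle Theorem to kill these local monodromies. This neatly sidesteps exactly the point you flagged as delicate: by working on $T_\reg$ rather than $\wt T_\reg$, the paper never needs to extend $h$ across the Brieskorn-resolved fibers. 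Your route is more self-contained --- no appeal to the Invariant Cycle Theorem or to the simple-connectedness of $T$ --- at the cost of having to justify that $\wt h$ is defined over the resolved fibers; as you note, this holds because the simultaneous resolution is a small modification of a base change and therefore still maps to $R\times\wt T_\reg$.
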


\begin{proof}
It suffices to show that the corresponding local system on $T_{\reg}$ has trivial monodromy, since $\set{\Lambda_R^t}_{t\in\wt T_\reg}$ is pulled back from it.
Recall that $T\simeq \P^1$ is simply connected, and the local monodromies are trivial by the easy direction of the Invariant Cycle Theorem, since $\mathcal X$ was constructed as a double cover $\mathcal X \to R\times \P^1$.
\end{proof}

For each connected component of $\wt T_\reg$, fix a basepoint $t_0\in\wt T_\reg$ and fix an identification $\Lambda_{\K3}$ with $\Hm^2(\wt X_{t_0})$, and hence of $\Lambda$ with $\Lambda^{t_0}$.
We write $\Lambda_R$ for $\Lambda_R^{t_0}\subset\Lambda$.

The identification of $\Lambda$ with $\Lambda^{t_0}$ also determines a lift $\tau_0\in\D(\Lambda)$ of $\tildej(t_0)\in\D(\Lambda)/\Gamma_p$.
Now consider the universal cover $\wt T_\reg^{\mathrm{univ}}$ and the corresponding lift $\wt T_\reg^{\mathrm{univ}}\to\D(\Lambda)$ of $\rstr{\tildej}{\wt T_\reg}$, whose image lies entirely inside $v^\perp\subset\D(\Lambda)$ for each $v\in\Lambda_R$.

The neighborhood $U\cap\D(\Lambda)/\Gamma_p$ of $\tildej(\wt T_\reg)$ lifts to a $G\defeq\tildej_*(\pi_1(\wt T_\reg))$-invariant neighborhood $\wt U$ of the image of $\wt T_\reg^{\mathrm{univ}}\to\D(\Lambda)$.
Since the local system $\set{\Lambda_R^t}_{t\in\wt T_\reg}$ is trivial, it follows that for each $v\in\Lambda_R$, the intersection $M_v\defeq v^\perp\cap\wt U$ is invariant under the action of $G\subset\Gamma_p$.

Thus, for each $v$, we have an embedding $M_v/G\hto U\cap\D(\Lambda)/\Gamma_p$.
Next, we consider the closure $\ol{M_v/G}$ of $M_v/G$ in $U$.

\begin{prop}
$\ol{M_v/G}\subset(\D(\Lambda)/\Gamma_p)^\II$ is a smooth submanifold.
\end{prop}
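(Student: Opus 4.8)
The plan is to establish smoothness of $\ol{M_v/G}$ by showing it is locally cut out, near each of its boundary points, as the closure of a single hyperplane section in the standard local model for the Type~II partial toroidal compactification. First I would recall the concrete description of $(\D(\Lambda)/\Gamma_p)^\II$ near a boundary stratum $Y_J$ from \S\ref{subsec:lattice-polarized}: locally the compactification is a punctured-disk bundle filled in to a disk bundle, modeled on $(E_\tau\otimes_\Z(J^\perp/J))$-bundles over the modular curve, with $\Gamma_p$ neat so that there are no stabilizer issues. Since $\Gamma_p$ is neat, $G\subset\Gamma_p$ acts freely, so $M_v/G$ is itself smooth; the only question is the behavior of its closure at the boundary. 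So the task reduces to a purely local one: identify which isotropic sublattice $J\subset\Lambda$ governs the boundary component that $\tildej(\wt T)$ hits (this is determined by the Type~II degeneration $R\cup_E R$, with $J$ generated by the fiber class $F$ and the vanishing cycle), and then compute the closure of $v^\perp$ in the local coordinates.

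The key steps, in order: (1) Fix a boundary point $p_0\in\ol{M_v/G}\setminus(M_v/G)$ lying over a point of the modular curve $Y_J$, and choose a local chart for $(\D(\Lambda)/\Gamma_p)^\II$ of the form $\Delta\times(\text{fiber})$ where the first factor is the disk transverse to the boundary divisor and the fiber is a neighborhood in $E_\tau\otimes(J^\perp/J)$. (2) In the corresponding tube-domain coordinates on $\D(\Lambda)$ before compactifying — writing periods as $\omega = \text{(linear form in the coordinate on }J^\perp/J)$ with a $q$-parameter for the $\Delta$-direction — express the condition $(v,\omega)=0$. (3) Observe that because $v\in\Lambda$ pairs in a fixed integral way with $J$ and with $J^\perp/J$, the equation $(v,\omega)=0$ extends holomorphically across $q=0$: if $v\perp J$ then the equation only involves the fiber coordinates and is linear, hence smooth; if $v$ is not orthogonal to $J$ then $v^\perp$ actually meets the boundary in a quasi-affine way but still with a single defining equation linear in a coordinate. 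Either way the zero locus is a smooth divisor in the chart, so its intersection with $\wt U$ and quotient by the free $G$-action, and then closure, is a smooth submanifold. (4) Finally, note $v^\perp$ for $v\in\Lambda_R$ passes through $\tildej(\wt T_\reg)$ by construction (the image lies in $v^\perp$), while for $v\notin\Lambda_R$ one uses that $\tildej$ is nonconstant to get transversality generically — but for the present statement we only need smoothness of each individual $\ol{M_v/G}$, which follows from steps (1)--(3).

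The main obstacle I expect is step~(3): controlling precisely how $v^\perp$ closes up at the boundary, which depends on the pairing of $v$ with the isotropic sublattice $J$ and requires choosing the right local coordinates so that the defining equation is manifestly holomorphic and of maximal rank across $q=0$. One has to be careful that $v^\perp$ does not become tangent to the boundary divisor or degenerate there; the cleanest route is to invoke the explicit local structure of the partial toroidal compactification (a neighborhood of $Y_J$ is, up to the finite group $q(\Stab_\Gamma(J))$ which is trivial here by neatness, a product of a disk with an abelian-variety bundle over $Y_J$) together with the fact that $v^\perp\cap\D(\Lambda)$ is a sub-period-domain of the same type, so its closure is the corresponding sub-partial-compactification — a statement that follows from the functoriality in Proposition~\ref{prop:partial-toroidal-functorial} applied to the embedding $v^\perp\cap\Lambda\hookrightarrow\Lambda$, or directly from the universal property of the toroidal construction. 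Pairwise transversality of the various $\ol{M_v/G}$, and transversality to $\tildej(\wt T)$ of those not containing it, will then be a consequence of the corresponding transversality of the linear subspaces $v^\perp$ inside $\D(\Lambda)$, which in turn reduces to a statement about the $v$'s spanning enough of $\Lambda$ — handled using Lemmas~\ref{lem:trans-on-roots} and~\ref{lem:parities-classf}.
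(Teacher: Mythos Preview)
Your proposal is correct and takes essentially the same approach as the paper: pass to explicit local coordinates near the Type~II boundary (a disk bundle over a torus fiber $\mathbb{T}^{32}$) and observe that the closure of $v^\perp$ is a smooth divisor there. One small correction to your step~(3): when $v\notin J^\perp$, the hyperplane $v^\perp$ surjects onto $\pi_J(\D)$ and hence does \emph{not} approach this boundary component at all, so there is nothing to check; the substantive case is $v\in J^\perp$, where the paper identifies the closure explicitly as $\Delta\times[v]^\perp\subset\Delta\times\mathbb{T}^{32}$, a disk bundle over a hypertorus $\mathbb{T}^{30}$. Your step~(4) and the invocation of Lemmas~\ref{lem:trans-on-roots}--\ref{lem:parities-classf} are extraneous here; transversality is the content of the subsequent Proposition~\ref{prop:component-decomp}.
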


\begin{proof}
We use the description in \cite{looijenga-arrangements} of a neighborhood of the toroidal boundary. Let $J\subset \Lambda$ be an isotropic plane, and consider the linear projections $\P(\Lambda_\C) \dasharrow \P(\Lambda_\C/J_\C) \dasharrow \P(\Lambda_\C/J^\perp_\C)$. Since $\D$ is an open subset of the isotropic quadric in $\P(\Lambda_\C)$, these projections restrict to regular fibrations on $\D$:
$$\D \to \pi_J(\D) \to \pi_{J^\perp}(\D)\simeq \mathbb H.$$
The fibers of the first map are upper half-planes $\mathbb H$, and the fibers of the second map are $(J^\perp/J)_\C\simeq J(\R)\otimes J^\perp/J$. A neighborhood of the Type II cusp in $\D/\Gamma_p$ is isomorphic to $\D/\Gamma_{p,J}$, where $\Gamma_{p,J}\subset\Gamma_p$ is the stabilizer of $J$. This is a maximal parabolic over $\Z$ with Levi decomposition
$$1\to N_{p,J} \to \Gamma_{p,J} \to \Gamma(p,J) \to 1,$$
where $\Gamma(p,J)\subset GL(J)$ is a congruence subgroup, and $N_{p,J}$ is an arithmetic Heisenberg group:
$$0 \to \wedge^2 J\simeq \Z \to N_{p,J} \to J\otimes J^\perp/J \to 0.$$
This filtration of $\Gamma_{p,J}$ is compatible with the tower above:
$$\D/\Gamma_{p,J} \to \pi_J(\D)/(\Gamma_{p,J}/\Z) \to \pi_{J^\perp}(\D)/\Gamma(p,J).$$
The fibers of the first map are punctured disks $\Delta^*\simeq \mathbb H/\Z$, and the fibers of the second map are tori $J(\R)/J(\Z)\otimes J^\perp/J$. The partial toroidal compactification is obtained by filling in these punctured disk fibers. Note that locally, we have $\Delta^*\times \mathbb T^{32}\subset \Delta\times \mathbb T^{32}$, so in particular the compactification is smooth.

To analyze the Noether-Lefschetz loci, if $v\notin J^\perp$, then $v^\perp\subset \D$ surjects onto $\pi_J(\D)$ via the linear projection, so the Noether-Lefschetz divisor does not meet the Type II boundary. If $v\in J^\perp$, then $\pi_J(v^\perp)\subset \pi_J(\D)$ is a hypersurface which restricts to a hyperplane in each fiber $J(\R)\otimes J^\perp/J$ defined by $[v]^\perp$ inside $J^\perp/J$. Since the vector is integral, the dual hyperlattice descends to a hypertorus $\mathbb T^{30}\subset \mathbb T^{32}$. The closure of this hypertorus in the compactification is given by
\[\Delta\times [v]^\perp \subset \Delta\times\mathbb T^{32}. \qedhere\]
\end{proof}

\subsubsection{Computing the intersection numbers}\label{subsubsec:computing-numbers}
\begin{prop}\label{prop:local-normal-bundle}
  The normal bundle to $\ol{M_v/G}$ is the restriction of the dual Hodge bundle $\lambda^\vee$ of $(\D(\Lambda)/\Gamma_p)^\II$, and we have $\tildej^\II_*[\wt T]\cdot[\ol{M_v/G}]=-d$, where $[\ol{M_v/G}]$ is a class in the Borel-Moore homology of $U$.
\end{prop}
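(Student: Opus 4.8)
The plan is to exploit the containment $\tildej(\wt T)\subseteq\ol{M_v/G}$ that is built into the construction. For $v\in\Lambda_R$, the chosen lift of $\rstr{\tildej}{\wt T_\reg}$ to the universal cover $\wt T_\reg^{\mathrm{univ}}$ has image inside $v^\perp\subset\D(\Lambda)$ and inside the $G$-invariant neighbourhood $\wt U$, hence inside $M_v=v^\perp\cap\wt U$; so $\tildej(\wt T_\reg)\subseteq M_v/G$, and since $\wt T_\reg$ is dense in the complete curve $\wt T$, the map $\tildej$ is proper, and $\tildej(\wt T)\subseteq U$, taking closures in $U$ gives $\tildej(\wt T)\subseteq\ol{M_v/G}$. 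By the preceding proposition $\ol{M_v/G}$ is a smooth submanifold of $U$ of codimension one, hence a Cartier divisor; since $\tildej$ factors through it, the intersection number we want is
\[
  \tildej^\II_*[\wt T]\cdot[\ol{M_v/G}]\;=\;\int_{\wt T}\tildej^{*}c_1\!\bigl(\O_U(\ol{M_v/G})\bigr)\;=\;\int_{\wt T}\tildej^{*}c_1(N),
\]
where $N:=\rstr{\O_U(\ol{M_v/G})}{\ol{M_v/G}}$ is the normal bundle (a line bundle, as $\ol{M_v/G}$ is smooth of codimension one). So the proposition reduces to the first assertion, $N\cong\rstr{\lambda^\vee}{\ol{M_v/G}}$, together with an evaluation of $\deg\tildej^{*}\lambda$.

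I would establish $N\cong\rstr{\lambda^\vee}{\ol{M_v/G}}$ in the two regimes matching the two-part description of $\ol{M_v/G}$ from the previous proof. Over the interior $M_v/G\subset\D(\Lambda)/\Gamma_p$, recall that $\D(\Lambda)$ is an open subset of the smooth quadric in $\P(\Lambda_\C)$ and that $\lambda^\vee=\rstr{\O_{\P(\Lambda_\C)}(1)}{\D(\Lambda)}$. The linear form $\omega\mapsto(\omega,v)$ is a section of $\lambda^\vee$ cutting out $v^\perp\cap\wt U=M_v$ transversally (the quadric is not contained in the hyperplane $(\,\cdot\,,v)=0$, since $v\ne 0$). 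Crucially, the monodromy group $G=\tildej_*\pi_1(\wt T_\reg)$ fixes $\Lambda_R$ pointwise, hence fixes $v$, so this section is $G$-invariant and descends to a section of $\lambda^\vee$ over $\wt U/G$ with simple zero locus $M_v/G$; thus $N_{M_v/G}\cong\rstr{\lambda^\vee}{M_v/G}$. Near the Type II boundary I would work in the Looijenga-type coordinates $\Delta\times\mathbb T^{32}$ used in the previous proof, in which $\ol{M_v/G}$ is the smooth divisor $\Delta\times[v]^\perp$; one checks that, after replacing $\lambda^\vee$ by its canonical (Deligne) extension across the boundary divisor $\Delta^\II$, the section $(\,\cdot\,,v)$ extends with a simple zero along $\Delta\times[v]^\perp$ and neither a zero nor a pole along $\Delta^\II$. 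This is precisely the local normal-bundle computation underlying the (quasi-)modularity of the Noether-Lefschetz series in \cite{greer-quasimod-from-mixed}, which may also be cited directly. Patching the two descriptions yields $N\cong\rstr{\lambda^\vee}{\ol{M_v/G}}$ on all of $U$.

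It then remains to compute $\deg\tildej^{*}\lambda=\lambda\cdot\tildej^\II_*[\wt T]$. Since the Hodge bundle on $(\D(\Lambda)/\Gamma_p)^\II$ is the pullback $\pi^{*}\lambda$ of the Hodge bundle on $(\D(\Lambda)/\Gamma)^\II$, the projection formula together with the identity $\pi_*\tildej^\II_*[\wt T]=d\cdot j^\II_*[T]$ established above gives
\[
  \lambda\cdot\tildej^\II_*[\wt T]=d\,\bigl(\lambda\cdot j^\II_*[T]\bigr)=d\cdot\deg\bigl(j^{\II*}\lambda\bigr)=d,
\]
the last equality by Proposition~\ref{prop:hodgedegree}, which identifies the Hodge bundle of $\mathcal X\to T$ with $\O_{\P^1}(1)$. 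Combining, $\tildej^\II_*[\wt T]\cdot[\ol{M_v/G}]=\int_{\wt T}\tildej^{*}c_1(\lambda^\vee)=-\deg\tildej^{*}\lambda=-d$.

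The main obstacle is the boundary half of the normal-bundle identification. One must verify both that the \emph{canonical} extension of $\lambda^\vee$ is the correct extension to use (so that $\deg\tildej^{*}\lambda$ over the complete curve $\wt T$ really matches Proposition~\ref{prop:hodgedegree}) and that, in the toroidal coordinates, the defining section of $\ol{M_v/G}$ contributes no zero or pole along the Type II boundary — equivalently, that $v\in J^\perp\smallsetminus J$ for the relevant isotropic plane $J$. The second point is automatic here: $v$ is non-isotropic, so $v\notin J$, while $v\in J^\perp$ is forced by the containment $\tildej(\wt T)\subseteq\ol{M_v/G}$ together with the fact that $\tildej(\wt T)$ meets the Type II boundary (at the points of $\wt T$ lying over the four Type II fibers of $\mathcal X\to T$).
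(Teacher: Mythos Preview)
Your overall architecture is correct and matches the paper's: reduce the intersection number via excess intersection to $\int_{\wt T}\tildej^*c_1(N)$, identify $N$ with $\lambda^\vee$ on the interior by the hyperplane-section argument (using $G$-invariance of $v$), and then compute $\deg\tildej^*\lambda=d$ from Proposition~\ref{prop:hodgedegree} via the projection formula. The interior and numerical parts are essentially the same as the paper's.

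The difference is in how the boundary extension of the normal-bundle identification is handled. You propose to verify directly, in the toroidal coordinates $\Delta\times\mathbb T^{32}$, that the section $(\,\cdot\,,v)$ of the extended $\lambda^\vee$ cuts out $\ol{M_v/G}$ with multiplicity one and has no zeros or poles along the boundary divisor $\Delta^\II$, deferring the verification to \cite{greer-quasimod-from-mixed}. The paper instead argues indirectly and more self-containedly: it observes that both $\lambda^\vee$ and $N$ restrict \emph{trivially} to the boundary tori (the former because $\lambda$ is pulled back from Baily--Borel, the latter because a sub-torus $[v]^\perp\subset\mathbb T^{32}$ has trivial normal bundle). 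The localization sequence
\[
  \Z[U\cap\partial(\D(\Lambda)/\Gamma_p)]\to\Pic(U)\to\Pic(U\cap\D(\Lambda)/\Gamma_p)\to 0
\]
then forces $N\otimes\lambda$ to be a multiple of the boundary class; since the boundary has anti-ample normal bundle on each $\mathbb T^{32}$ while $N\otimes\lambda$ restricts trivially there, that multiple must be zero. This sidesteps the explicit coordinate computation you flag as ``the main obstacle'' and avoids any appeal to the Deligne extension. Your route would also work once the boundary check is carried out, but the paper's localization trick is cleaner and does not depend on identifying which extension of $\lambda$ is in play.
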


\begin{proof}
The fact that $M_v/G$ has normal bundle $\lambda^\vee$ follows from the definition of $M_v = v^\perp \cap U$ as a hyperplane section, and applying descent. The Hodge bundle $\lambda$ on $\D(\Lambda)/\Gamma_p$ extends to the partial toroidal compactification $(\D(\Lambda)/\Gamma_p)^\II$ where it is trivial on the torus fibers $\mathbb T^{32}$, since it is defined as the pullback of a line bundle from the Baily-Borel compactification. The normal bundle of $\overline{M_v/G}$ is also trivial when restricted to the boundary, since it meets the boundary in a hypertorus. The result now follows from the localization sequence for Picard groups:
$$\Z[U\cap \partial( \D(\Lambda)/\Gamma_p)] \to \Pic(U) \to \Pic(U\cap \D(\Lambda)/\Gamma_p)) \to 0$$
where $\partial (\D(\Lambda)/\Gamma_p)=(\D(\Lambda)/\Gamma_p)^\II - \D(\Lambda)/\Gamma_p$. Indeed, the difference between $\lambda^\vee$ and the normal bundle to $\overline{M_v/G}$ is a multiple of $[U\cap \partial( \D(\Lambda)/\Gamma_p)]$, but if we restrict the difference to $U\cap \partial( \D(\Lambda)/\Gamma_p)$ itself, the result is trivial. On the other hand, the normal bundle of $U\cap \partial( \D(\Lambda)/\Gamma_p)$ is anti-ample on each $\mathbb T^{32}$ by its construction as an exceptional divisor. The numerical statement follows from the excess intersection formula since the Hodge bundle of $\mathcal X\to T\cong\P^1$ is $\O(1)$ (by Proposition~\ref{prop:hodgedegree}), so the Hodge bundle of $\wt{\mathcal X}\to \wt T$ has degree $d$.
\end{proof}

We are now ready to separate the components of $\ol{\NL}_{p,n}$ that contain $\tildej(\wt T)$ and contribute to $\psi_\ex$ from those that intersect it transversely and contribute to $\psi_\no$ and $\phi-\psi$.

\begin{prop}\label{prop:component-decomp}
$\ol{\NL}_{p,n}\cap U$ is a union of pairwise-transverse connected complex submanifolds
\[
\Big(
    \bigcup_{t\in\wt T}\bigcup_{\alpha\in I_t}N_\alpha
\Big)
\cup
\bigcup_{\substack{v\in \Lambda_R\\v^2=-2n}}
\ol{M_v/G}
\]
where each $N_\alpha$ with $\alpha\in I_t$ intersects $\tildej^\II(\wt T)$ transversely, and where the index set $I_t$ is the set of divisor classes in $\ZF^\perp(X_t)$ with self-intersection $-2n$ and lying outside of $\Lambda_R^t$.
\end{prop}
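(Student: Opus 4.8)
The strategy is to pull the whole picture back to the smooth cover $\D(\Lambda)/\Gamma_p$, where $\ol{\NL}_{p,n}$ is a locally finite union of hyperplane sections $v^\perp$ (one for each $v\in\Lambda$ with $v^2=-2n$), and to sort these according to whether or not $v$ lies in $\Lambda_R$. The key point is a dichotomy: for $v$ as above, the hyperplane section $v^\perp$ contains the image of (a connected component of) $\wt T_\reg^{\,\mathrm{univ}}$ in $\D(\Lambda)$ if and only if $v\in\Lambda_R$. The ``if'' direction is the defining property of the $\Lambda_R^t$. For ``only if'', observe that if $v^\perp$ contains the positive-dimensional image then $v$ is a Hodge class on $\wt X_t$ for every $t$, hence integral and of type $(1,1)$, hence in $\NS(\wt X_t)$; but for a general such $t$ --- such $t$ exist since, by Proposition~\ref{prop:galois}, $\wt T_\reg$ is not contained in the Noether--Lefschetz locus coming from any section of $\wt X_t$ not pulled back from $R$ --- the Néron--Severi lattice equals the $\sigma_t$-invariant part $\Hm^2(\wt X_t,\Z)^{\sigma_t}=\ZF(\wt X_t)\oplus\Lambda_R^t$ of the cohomology of the double cover $\wt X_t\to R$ (here one uses that the ``glue'' class of the branch fibers is already the fiber class of $\wt X_t$), and intersecting with $\ZF^\perp$ gives $v\in\Lambda_R^t=\Lambda_R$, the last equality by triviality of the local system $\set{\Lambda_R^t}$.

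With the dichotomy in hand, I would shrink $U$ so that only finitely many branches meet it (local finiteness near the compact set $\tildej^\II(\wt T)$), discarding those that miss $\tildej^\II(\wt T)$. The surviving branches with $v\in\Lambda_R$ give the $\ol{M_v/G}$: here $M_v=v^\perp\cap\wt U$ is $G$-invariant by triviality of $\set{\Lambda_R^t}$, $\ol{M_v/G}$ is a smooth submanifold of $U$ by the preceding Proposition, and it contains all of $\tildej^\II(\wt T)$ by the dichotomy. Each surviving branch with $v\notin\Lambda_R$ meets the image of $\wt T_\reg^{\,\mathrm{univ}}$, again by the dichotomy, only in isolated points; each connected component of $v^\perp\cap\wt U$ (pushed to $U$) is an $N_\alpha$, meeting $\tildej^\II(\wt T)$ at a single point $\tildej^\II(t)$. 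By the Lefschetz $(1,1)$-theorem the corresponding $v$ is then a Hodge class on $\wt X_t$ orthogonal to $\ZF(\wt X_t)$, i.e.\ $v\in\ZF^\perp(\wt X_t)\smallsetminus\Lambda_R^t$, so $v$ lies in the index set $I_t$. The points $t$ arising this way are either nodal degenerations (after simultaneous resolution, producing an extra $(-2)$-curve) or points of some $\wt V(L)$ (producing a Galois-conjugate pair of sections $s,s'$, hence the anti-invariant class $[s]-[s']$); by genericity of $R$ and of the conic $T$ they are disjoint from the four Type II points, so no $N_\alpha$ comes near the toroidal boundary.

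Transversality remains. Two distinct branches of $\ol{\NL}_{p,n}$ through a common interior point $p=[\C\omega]$ are cut out by non-proportional integral vectors $v,w$ of norm $-2n$, and since the tangent space to $v^\perp$ at $p$ is the hyperplane in $T_p(\D(\Lambda)/\Gamma_p)$ annihilated by $X\mapsto(X(\omega),v)$, and these annihilators are linearly independent once $v$ and $w$ are non-proportional, the two branches are transverse at $p$. Along the Type II boundary only the $\ol{M_v/G}$ with $v\in\Lambda_R$ occur; by the preceding Proposition they restrict to the hypertori $\Delta\times[v]^\perp\subset\Delta\times\mathbb T^{32}$, and because $\Lambda_R$ is negative definite --- so meets the isotropic plane $J$ trivially --- distinct $v$ give $\R$-linearly independent images $[v]$ in $J^\perp/J$, whence these hypertori are transverse, and combined with transversality to the boundary divisor itself this gives transversality of the $\ol{M_v/G}$ in the ambient space. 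For the transversality of each $N_\alpha$ with $\tildej^\II(\wt T)$: at a nodal point it follows from the fact that the family $\wt{\mathcal X}\to\wt T$ acquires its node transversely (a simple zero of the discriminant), so the vanishing-cycle Noether--Lefschetz divisor meets $\tildej^\II(\wt T)$ with multiplicity one; at a point of $\wt V(L)$ it is precisely the transversality of Noether--Lefschetz intersections announced after Theorem~\ref{thm:reducedseveri}.

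I expect this last point to be the main obstacle. Concretely, one must identify the tangent line to $\tildej^\II(\wt T)$ at such a point with a first-order deformation of the bisection $B\subset R$ inside $|L|$, match the sub-locus of $v^\perp$ with the node-preserving (equisingular) deformations of $B$, and show that the single remaining direction moves the period off $v^\perp$ --- this last being exactly the surjectivity furnished by the vanishing of $\Hm^1$ in Lemma~\ref{lem:vanishingtheorem}, i.e.\ the $1$-dimensionality of the equisingular deformation space of Theorem~\ref{thm:reducedseveri}. A secondary, more bookkeeping-heavy issue is choosing $U$ carefully enough --- using genericity of $R$ and of $T$ --- to guarantee both the local finiteness invoked above and that the $N_\alpha$ stay away from the Type II boundary.
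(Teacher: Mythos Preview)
Your approach follows essentially the same route as the paper's: sort the hyperplane branches $v^\perp$ by whether $v\in\Lambda_R$, identify the $\ol{M_v/G}$ with the excess components containing all of $\tildej^\II(\wt T)$, and reduce transversality of the $N_\alpha$ at section-type points to Theorem~\ref{thm:reducedseveri} (a first-order deformation along $\wt T$ preserving the extra section class yields a first-order equisingular deformation of the bisection, making the conic $T$ tangent to $f_L(\wt V(L))$, contrary to genericity). You also spell out the pairwise transversality of distinct branches and their behaviour near the Type II boundary, which the paper's printed proof leaves implicit.

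The one place where your argument needs repair is the nodal case. The sentence ``the family $\wt{\mathcal X}\to\wt T$ acquires its node transversely (a simple zero of the discriminant)'' is not correct as written: $\wt{\mathcal X}\to\wt T$ is the \emph{simultaneously resolved} family and has smooth fibres, so it acquires no node at all; and while $\mathcal X\to T$ does have simple-zero discriminant at the $24$ points, the cover $\wt T\to T$ is ramified of order $2$ precisely there, so a naive multiplicity-one count does not survive the base change. The paper's argument is more robust: $\wt{\mathcal X}$ is a small resolution of the threefold ordinary double point of $\pi_T^*\mathcal X$, and the exceptional $\P^1$ has normal bundle $\O_{\P^1}(-1)^{\oplus 2}$ inside $\wt{\mathcal X}$, hence is infinitesimally rigid in the total space. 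This rigidity is exactly what forces the period map along $\wt T$ to move off the corresponding $v^\perp$ to first order. Replacing your discriminant remark with this normal-bundle argument closes the gap.
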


\begin{proof}
  Fix $t\in\wt T_\reg$ and consider a lift $\tau\in\D(\Lambda)$ of $\tildej(t)$.
  The components of $\ol{\NL}_{p,n}\cap U$ containing $\tildej(t)$ are precisely the images of components $v^\perp\cap\wt U$ containing $\tau$ for $v\in\ZF^\perp(X_t)$ with $v^2=-2n$.
Those components $v^\perp\cap\wt U$ with $v\in\Lambda_R^t$ correspond to the $\ol{M_v/G}$, so the remaining components $N_\alpha$ are in bijection with the remaining $v\in\Lambda$ with $v^2=-2n$, as claimed.

  It remains to prove the transversality claim.
  Near any $t\in\wt T$ with $\tildej(t)\in\D(\Lambda)/\Gamma_p$ (this is the case when $\tildej(t)$ lies in some $N_\alpha$), we can choose a small neighborhood $\wt W$ of $t$ and a lift $\hatj\colon\wt W\to\D(\Lambda)$ of $\tildej$.
  Transversality of $\tildej$ to $N_\alpha$ is equivalent to transversality of $\hatj$ to the corresponding component $v^\perp\cap\wt U$.
  We now consider two cases, according to whether or not $\pi_T$ is ramified at $t$.

  The first is when $t\in\wt T$ is not a ramification point of $\wt T\to T$.
  Suppose for the sake of contradiction that $\hatj$ fails to be transverse to some $v^\perp\cap\wt U$ at $t$.
We then have a non-trivial tangent vector $\trait=\Spec(\C[\varepsilon]/\varepsilon^2)\to W$, where $W\subset T$ is the image of $\wt W$ in $T$, whose composition $\trait\to\D(\Lambda)$ with the map $W\to\D(\Lambda)$ lies in $v^\perp$.
Pulling back $\mathcal{X}$, this gives a K3 surface $X_{\trait}$ over $\trait$ containing an extra divisor class.

  From the Shioda-Tate exact sequence (\ref{subsec:elliptic-surfaces}), this means that $X_{\trait}$ either has a reducible fiber or an extra section.
  We must be in the latter case, as the former only occurs when $t$ is a ramification point of $\wt T\to T$.
  It follows that $X_{\trait}$ has an extra section, which maps to a bisection under the double cover $X_{\trait}\to R\times\trait$.
This implies that the map $T\to\Sym^2(\P^1)$ fails to be transverse to the Severi curve $f_L\colon\wt V(L)\to\Sym^2(\P^1)$, which is impossible by Theorem~\ref{thm:equisingular} as $T$ was chosen to be a general conic.

  The second case is when $t$ is a ramification point of degree 2 of $\wt T\to T$, so that $\wt X_t$ has an extra ($-2$)-curve class in a reducible fiber.
  By construction, there is an $\mathrm A_1$-singularity in the surface $X_{\pi_T(t)}$, $\pi_T^*\mathcal{X}\to\wt T$ has a threefold node singularity lying over $t$, and $\wt{\mathcal X}$ is a small resolution of $\pi_T^*\mathcal{X}$. The exceptional curve in $\wt{\mathcal X}$ has normal bundle isomorphic to $\O_{\P^1}(-1)^{\oplus 2}$, so in particular the curve is infinitesimally rigid. It follows that the infinitesimal period map $\trait\to\wt T\to\D(\Lambda)$ will be transverse to the relevant Noether-Lefschetz divisor, as desired.
\end{proof}

It follows from Proposition~\ref{prop:component-decomp} that the intersection number $\tildej_*[\wt T]\cdot\ol{\NL}_{p,n}=d\cdot j_*[T]\cdot\ol{\NL}_{n}$ decomposes as a sum $\chi_{p,n}+\psi_{\ex,p,n}$, defined as follows:
\[
  \tildej_*[\wt T]\cdot\ol{\NL}_{p,n}=
  \sum_{t\in\wt T}\sum_{\alpha\in I_t}\tildej_*[\wt T]\cdot[N_\alpha]
  +
  \sum_{\substack{v\in \Lambda_R\\v^2=-2n}}
  \tildej_*[\wt T]\cdot[\ol{M_v/G}]
  \qefed
  \chi_{p,n}+\psi_{\ex,p,n}
\]
Thus, defining the series $\psi_\ex(q)\defeq\frac1d\sum_{n=0}^\infty\psi_{\ex,p,n}q^n$ and $\chi(q)=\frac1d\sum_{n=0}^\infty\chi_{p,n}q^n$, we have
\begin{equation}\label{eq:psi-chi-eq}
  \sum_{n=0}^\infty j_*[T]\cdot[\ol{\NL}_n]q^n=\psi_\ex(q)+\chi(q)
\end{equation}
By Proposition~\ref{prop:local-normal-bundle}, we have that $\psi_{\ex,p,n}$ is $-d$ times the number of nonzero vectors of length $-2n$ in $\Lambda_R\cong\E8(-2)$, from which it follows by Proposition~\ref{prop:e8-and-e4} that
\begin{equation}
  \label{eq:excess-contr}
  \psi_\ex(q)=-\eis_4(q^2)+1.
\end{equation}

Next, $\chi(q)$ further decomposes as a sum
\begin{equation}\label{eq:chi-decomp}
  \chi(q)=\psi_{\sec}(q)+\psi_{\no}(q)=
  \frac1d\sum_{n=0}^\infty\psi_{\sec,p,n}q^n+
  \frac1d\sum_{n=0}^\infty\psi_{\no,p,n}q^n,
\end{equation}
defined as follows.
By Proposition~\ref{prop:component-decomp} $\chi_{p,n}$ is the number of divisor classes of self-intersection number $-2n$ lying on surfaces $\wt X_t$ in the family $\wt{\mathcal{X}}$ orthogonal to $\ZF(X_t)$ and lying outside of $\Lambda_R^t$.
We let $\psi_{\sec,p,n}$ be the count of such curves which are sections, and we let $\psi_{\no,p,n}$ be the count of the remaining curves.

As explained at the beginning of \S\ref{sec:bounding}, we have that
\[
  \psi_{\sec}(q)=4\sum_L\deg\left({{f_L}_*[\wt V(L)]}\right)q^{n+2}
\]
since, by Proposition~\ref{prop:galois}, sections in the family $\mathcal{X}$ (and hence $\wt{\mathcal{X}}$) that are not pulled back from $R$ occur in pairs and precisely when the double cover $u_t\colon\P^1\to\P^1$ defining some $X_t$ has the same branch locus as a bisection on $R$; and these correspond to the intersection points of ${f_L}_*(\wt V(L))$ with the conic $T\to\Sym^2(\P^1)$ which tracks the branch points of the $u_t$.

Finally, recalling the Jacobi theta function \( \theta \) from \S\ref{subsec:qmod-forms}, we have
\begin{prop}\label{prop:nodal-contribution}
  \begin{equation}
    \label{eq:nodal-contr}
    \psi_\no(q)=
    12\big(\theta(q)-1\big)\cdot\eis_4(q^2) =
    \eis_4(q^2)\cdot 24\sum_{k\geq 1}q^{k^2}.
  \end{equation}
\end{prop}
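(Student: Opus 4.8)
The plan is to compute $\psi_{\no,p,n}$ by counting, on each nodal surface $X_t$ in the family $\wt{\mathcal X}$, the new algebraic cycles of self-intersection $-2n$ that are orthogonal to $\ZF(X_t)$, are not themselves sections (those are already counted by $\psi_{\sec}$), and lie outside $\Lambda^t_R$. Recall from earlier (the linear coefficient of $\varphi$, and the discussion at the start of \S\ref{subsec:geom-interp}) that the conic $T\to\Sym^2(\P^1)$ meets the 12 discriminant lines in exactly 24 points, each corresponding to a surface $X_t$ acquiring a single node. By the proposition on the structure of $X$ as a double cover, this node is an ordinary double point at the node of a singular fiber of $R$, and its minimal resolution $\wt X_t$ carries an extra $(-2)$-curve $C$, namely the exceptional $\P^1$, which is the second component of a reducible fiber of type $\mathrm I_2$ (or $\mathrm{III}$) of the elliptic fibration $\wt X_t\to\P^1$. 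So on each such $\wt X_t$ the lattice $\ZFF(\wt X_t)$ is strictly bigger than $\ZF(\wt X_t)$.

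Next I would describe the extra classes precisely. Writing $C$ for the new fiber component, a divisor class orthogonal to $\ZF(\wt X_t)$ but lying in $\ZFF(\wt X_t)\oplus\ZF^\perp$ is of the form $v + mC + \text{(fiber correction)}$, and after orthogonal projection away from $\ZF(\wt X_t)$ one finds that the relevant new classes are parametrized by $w + \tfrac12 \widetilde C$ where $w\in\ZF^\perp(\wt X_t)\cong\E8(-2)$ (this uses that $\widetilde C$, the projection of $C$, has norm $-2$ and is 2-divisible in the relevant sense, pairing trivially with $\Lambda_R\cong\E8(-2)$). Imposing self-intersection $-2n$ gives a norm condition whose solution count factors as a count over $w\in\E8(-2)$ together with the $\tfrac12\widetilde C$ shift contributing a perfect-square offset $k^2$; this is exactly where the $\theta$-series $\theta(q)-1 = 2\sum_{k\ge1}q^{k^2}$ enters. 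Since $C$ is rigid (its normal bundle in $\wt X_t$ is $\O_{\P^1}(-2)$, so it moves in no family), each such class contributes exactly once per nodal fiber, and transversality of the period map at these points was already established in the proof of Proposition~\ref{prop:component-decomp} (second case). Multiplying: $24$ nodal surfaces, each contributing $\tfrac12\big(\theta(q)-1\big)\cdot\eis_4(q^2)$ after accounting for the $\pm$ in the $\theta$-series and the $\E8(-2)$ theta function $\eis_4(q^2)$ (Lemma~\ref{lem:g-g-1-bisections-biject} identifies $\eis_4(q^4)$ with 2-divisible vectors; here the analogous count over all of $\E8(-2)$ gives $\eis_4(q^2)$), yields $\psi_\no(q) = 12\big(\theta(q)-1\big)\cdot\eis_4(q^2) = \eis_4(q^2)\cdot 24\sum_{k\ge1}q^{k^2}$.

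The main obstacle I anticipate is the bookkeeping in the previous paragraph: correctly identifying which extra curve classes on $\wt X_t$ are genuinely new (not pulled back from $R$, i.e., outside $\Lambda^t_R$, and not sections already tallied in $\psi_{\sec}$), and getting the exact interplay between the $\E8(-2)$ lattice vectors and the half-integral shift by the exceptional curve so that the generating function is precisely $\big(\theta(q)-1\big)\cdot\eis_4(q^2)$ rather than some twist of it. One must also be careful that each node of each of the 12 singular fibers of $R$ is hit by exactly the right number of the 24 intersection points (two per discriminant line, one line per singular fiber of $R$, hence $24/12 = 2$ nodal surfaces per fiber), and that the two Galois-conjugate resolutions at a node do not double-count — but this is precisely the degree-$d$ cover bookkeeping already set up, so dividing by $d$ at the end recovers the stated modular form.
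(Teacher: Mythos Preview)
Your overall strategy matches the paper's: count the $24$ nodal surfaces, and on each resolved surface enumerate the classes in $\ZF^\perp$ of norm $-2n$ lying outside $\Lambda_R^t$. However, your description of the lattice computation is tangled in a way that would not survive careful scrutiny, even though you land on the correct formula.

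First, you conflate $\ZF^\perp(\wt X_t)$ with $\Lambda_R^t\cong\E8(-2)$. On a resolved nodal fiber these are different: $\ZF^\perp(\wt X_t)$ has rank $9$, and it splits orthogonally as $\Lambda_R^t\oplus\Z e$, where $e$ is the exceptional $(-2)$-curve. The class $e$ already lies in $\ZF^\perp$ (it is a fiber component not meeting the zero section), so no projection $\widetilde C$ is needed, and there is no half-integral shift: a class $w+\tfrac12\widetilde C$ with $w$ in the lattice and $\widetilde C$ primitive of norm $-2$ is simply not a lattice element. The extra classes are exactly $v+ke$ with $v\in\Lambda_R^t$ and $k\in\Z\setminus\{0\}$, and $(v+ke)^2=v^2-2k^2$. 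Summing $q^{-(v+ke)^2/2}$ over these gives $\eis_4(q^2)\cdot(\theta(q)-1)$ directly, with no half-integers and no ``2-divisible in the relevant sense'' (indeed $e$ is primitive).

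Second, your factor of $\tfrac12$ is attributed to ``the $\pm$ in the $\theta$-series,'' but $\theta(q)-1=2\sum_{k\ge1}q^{k^2}$ already sums over both signs of $k$. The correct source of the $\tfrac12$ is the cover $\wt T\to T$: it is ramified to order $2$ over each of the $24$ nodal points, so there are $24\cdot d/2$ such surfaces on $\wt T$, and dividing by $d$ yields the factor $12$. The paper makes this explicit; your final paragraph gestures at the ``degree-$d$ cover bookkeeping'' but mislocates where the $\tfrac12$ enters.

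In short: right idea, right answer, but the parametrization of the extra classes and the bookkeeping for the factor $12$ should be rewritten along the lines above.
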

\begin{proof}
  Any curve counted by $\psi_{\no,p,n}$ must occur on a surface $\wt X_t$ in $\wt{\mathcal{X}}$ which is a resolution of a nodal surface in $\pi_T^*\mathcal{X}$.
  There are 12 lines in $\P^2$ corresponding to the 12 critical values of $\pi\colon R \to \P^1$.
  The test conic $T \to \P^2$ intersects each of them twice.
  Hence, since $\pi_T$ is branched at each such $t\in\wt T$, there are $24\cdot\frac d2$ such surfaces $\wt X_t$.

  Now fix one such $\wt X_t$, and denote by $e$ the exceptional $-2$ curve class of the resolution $\wt X_t\to(\pi^* X)_t$.
  For each vector $v\in \E8(-2)$ there is a corresponding section class $s_v\in \NS(\wt{X}_t)$.
  For each \( k \ne 0 \), the algebraic cycle $s_v+ke$ has orthogonal projection $v+ke\in\ZF^\perp(\wt X_t)$, which lies outside $\Lambda_R^t$ and has self-intersection $v^2-2k^2$.

  We thus receive \( (24 \frac d 2) \cdot 2 \) contributions to \( \psi_{\no,p,n} \) for each vector \( v \) in \( \E8 \) and each \( k \ge 1 \) with \( -2 v^2 - 2 k^2 = - 2 n \).
  Using Proposition~\ref{prop:e8-and-e4}, the claim follows.
\end{proof}

\begin{proof}[Proof of Proposition~\ref{prop:degrees-are-modular}]
  The claim now follows from Equations~\eqref{eq:psi-chi-eq},~\eqref{eq:excess-contr}~\eqref{eq:chi-decomp},~and~\eqref{eq:nodal-contr}, since $\eis_4(q^2)$ and $12(\theta(q)-1)\cdot\eis_4(q^2)$ are modular forms of weight $\le 5$ for $\Gamma_0(4)$.
\end{proof}

\subsection{Proof of the genus bound}\label{subsec:mainthm}
\begin{lem}\label{lem:f_l_is_birational}
  For fixed \( g \) and \( \star \) as in Corollary~\ref{cor:mres-is-smooth}, and for a general point \( (R,L) \in \MRES_g^\star \), the branch morphism $f_L\colon\wt{V}(L)\to\Sym^2(\P^1)$ is birational onto its image.
\end{lem}
\begin{proof}
  Let $R$ be a general rational elliptic surface and $L$ a bisection line bundle on $R$. Consider two distinct points $B_1,B_2$ in the smooth locus of $V(L)\subset |L|$ with the same image under $f_L$. That is, $B_1$ and $B_2$ are rational bisections of $R \to \P^1$ with the same branch divisor $p_1+p_2\in\Sym^2(\P^1)$.
  Consider a double cover $u\colon\P^1\to\P^1$ branched at $p_1+p_2$, and the resulting K3 surface $X$ as discussed in \S\ref{subsec:families-of-k3s}.

  We claim that the Picard rank of such an $X$ is $\ge 12$.
  For $i=1,2$, the bisection $B_i$ pulls back to a pair of Galois conjugate sections $(u')\I(B_i)=s_i+s_i'$ in $X$.
  Note that for any section $s$ of $R$, we have $B_i\cdot s= s_i\cdot(u')\I(s)=s_i'\cdot (u')\I(s)$, and $B_1\cdot s=B_2\cdot s$ as $B_1$ and $B_2$ are rationally equivalent.
  In other words, $s_1,s_2,s_1',s_2'$ all have the same intersection number with $(u')\I(s)$ for any given $s\in \MW(R)$.

  Recall from \S\ref{subsec:elliptic-surfaces} that $\ZF^\perp(X)$ is a negative-definite lattice, and we have an isomorphism $\MW(X)\to\ZF^\perp(X)$ of abelian groups.
  The images in $\ZF^\perp(X)$ of pullbacks $(u')\I(s)$ form a sublattice $\Lambda_R$ isomorphic to $\E8(-2)$.
  Since $s_1,s_2,s_1',s_2'$ all have the same height, their orthogonal projections to $\ZF^\perp(X)$ all have the same norm, as in Proposition~\ref{prop:sec-proj-formula}, and furthermore their orthogonal projections to $\E8(-2)$ are all equal. It follows that $\ZF^\perp(X)$ must have rank $\ge 10$, since if it only had rank \( 9 \), the four vectors in question would all have to lie on the same line, and intersect the same sphere, but a line intersects a sphere in at most two points.

  Recall from Corllary~\ref{cor:severi-invariance} that we have a universal family \( \mathcal{V}_g^\star \to \MRES_g^\star \) over the 8-dimensional moduli stack \( \MRES_g^{\star} \).
  The various maps $f_L$ extend to a universal map $F\colon \mathcal{V}_g^\star\dasharrow\P^2$.
  Since the property of $\rstr{F}{V(R,L)}$ being birational is open, it suffices to show that $\MRES_g^\star$ contains at least one $(R,L)$ for which \( f_L \) is birational.

  Now suppose for the sake of contradiction that $f_L$ fails to be birational onto its image for \emph{all} $(R,L)\in \MRES_g^\star$.
  Then for each $(R,L)$, there exists a curve in
  \[\wt{V}(R,L)\times_{\P^2}\wt{V}(R,L)\smallsetminus\Delta.\]
  In other words, there is a 1-parameter family of $B_1\in V(R,L)$ such that there exists a distinct $B_2\in V(R,L)$ with $f_L(B_1)=f_L(B_2)$.
  There exists an open subset $W\subset\Sym^2(\P^1)$ over which the $\mu_2$-gerbe $\Mb_0(\P^1,2)\to\Sym^2(\P^1)$ has a section.
  By passing to an open subset $\mathcal{W}\subset\mathcal{V}_g^\star$, we may assume that $F(\mathcal W)\subset W$, and hence that we have a lift $\wt{F}\colon\mathcal{W}\to\Mb_0(\P^1,2)$.
  Base changing each \( R \) appearing in \( \mathcal{W} \) along the double-cover of \( \P^1 \) specified by \( \wt{F} \) gives us a family of K3 surfaces over $\mathcal{W}$, each with Picard rank $\ge12$.
  Let $\mu\colon\mathcal{W}\to\mathcal{M}^M_{\K3}$ be the corresponding map to the moduli space of $M$-polarized K3 surfaces, where $M$ is the appropriate polarizing lattice of signature $(1,11)$.
  We will show that $\mu$ has discrete fibers, and hence obtain a contradiction, since $\mathcal{M}^M_{\K3}$ is 8-dimensional, while $\mathcal{W}$ is 9-dimensional.

  It is enough to show that $\mu\I([X])$ is countable for any $M$-polarized K3 surface $X$.
  Given any $(R,L,B)\in\mu\I([X])$, we know that $R$ is the quotient of $X$ by some involution.
  Since each K3 surface $X$ has discrete automorphism group, it follows that there are only countably many such $R$.
  Next, for fixed $R$, there are only countably many bisection line bundles $L$ on $R$ at all (hence only countably many $L$ such that $(R,L,B)\in \mu\I([X])$ for some $B$).

  Now fix $R,L$, and suppose that there is a positive-dimensional component $Z$ of $\mu\I([X])$ consisting of points of the form $(R,L,B)$.
  The restriction $\rstr{\mu}Z\colon Z\to\mathcal{M}^{M}_{\K3}$ gives a family of K3 surfaces over $Z$ all of whose fibers are isomorphic to $X$ and are pullbacks of $R$ along various degree 2 covers of $\P^1$.
  After passing to an étale open of $Z$, we may assume that this family is trivial, since $\mathcal{M}^M_{\K3}$ is a DM-stack.

  But each fiber $X$ of the trivial family comes with a map $u'\colon X\to R$ branched along the fibers of the branch divisor of $B$, hence $B$ with different branch divisors give rise to different $u'$, and hence different involutions of $X$, of which there are only countably many.
  Since $X$ is not uniruled, there can only be finitely many $B$ with a given branch divisor.
  It follows that that there are only countably many $B$, as desired.
\end{proof}

\begin{thm}\label{thm:genusbound-with-proof}
For any smooth rational elliptic surface \( R \) with irreducible fibers and bisection line bundle \( L \) on \( R \), the geometric genus of a Severi curve $V(R,L)$ is $O(g^{12+\varepsilon})$, where $g = \frac{1}{2}L^2$ is the genus of a general member $B\in |L|$.
\end{thm}
\begin{proof}
  Fix a bisection line bundle $L$ of genus $g$, and let us first consider the case in which $g$ is odd. By Lemma~\ref{lem:min-height-realizable} below, we can choose a zero section $Z$ of $R$ so that $L$ has height $g$. Now, by Proposition~\ref{prop:degrees-are-modular} and the fact that $\varphi(q)$ is quasi-modular of weight 10, we have an $O(g^{9+\varepsilon})$ bound on the sum $\sum_{L'}\deg({f_{L'}}_*[\wt V(L')])$ over \emph{all} height $g$ bisection line bundles $L'$. Hence, in particular, we have a $O(g^{9+\varepsilon})$ bound on $\deg({f_L}_*[\wt V(L)])$.

  We can improve this crude estimate as follows. By Corollary~\ref{cor:severi-invariance}, we have that $\deg({f_L}_*[\wt V(L)])$ in fact only depends on the genus $g$ of $L$, since $g$ is odd. Hence, all terms $\deg({f_{L'}}_*[\wt V(L')])$ indexed by $L'$ of genus $g$ -- of which there are $\#\mathcal{B}_{g,g}$ -- are equal. This gives
  \[\label{eq:genusbound-inequality}
  \deg({f_L}_*[\wt V(L)]) = \frac{1}{\#\mathcal{B}_{g,g}}\sum_{L'}\deg({f_{L'}}_*[\wt V(L')]).
  \]
  By Lemma~\ref{lem:g-g-bisections-biject}, $\#\mathcal{B}_{g,g}$ is the $q^{g+2}$ coefficient $240\sigma_3(g+2)$ of the Eisenstein series $\eis_4(q)$, which is bounded below by $g^3$. We hence obtain an $O(g^{6+\varepsilon})$ bound on $\deg({f_L}_*[\wt V(L)])$.
  By the quadratic genus-degree formula, this gives us a $O(g^{12+\varepsilon})$ bound on the geometric genus of the image of $f_L$.
  By Lemma~\ref{lem:f_l_is_birational}, the latter is birational to \( V(L) \) for \emph{general} \( (R,L) \) of genus \( g \), hence we obtain the same bound on the genus of \( V(L) \) for general \( (R,L) \) of genus \( g \).
  But since geometric genus is lower semi-continuous over a reduced base, and \( \MRES_g^\ord \) is reduced by Corollary~\ref{cor:mres-is-smooth}, we get the same bound for \emph{all} \( (R,L) \) of genus \( g \).

 The proof for $g$ even is similar, except for the additional distinction of ordinary versus Weierstrass line bundles $L$. When $g\equiv 2 \mod 4$, the inequalities analogous to \eqref{eq:genusbound-inequality} become
\begin{align*}
\deg({f_L}_*[\wt V(L)]) &\le \frac{1}{\#\mathcal{B}^\ord_{g,g}}\sum_{L'}\deg({f_{L'}}_*[\wt V(L')]);\\
\deg({f_L}_*[\wt V(L)]) &\le \frac{1}{\#\mathcal{B}^\Wei_{g,g}}\sum_{L'}\deg({f_{L'}}_*[\wt V(L')]).
\end{align*}
When $g\equiv 0\mod 4$, we use Lemma~\ref{lem:g-g-1-bisections-biject} for the Weierstrass case:
\[
\deg({f_L}_*[\wt V(L)]) \le \frac{1}{\#\mathcal{B}^\Wei_{g,g-1}}\sum_{L'}\deg({f_{L'}}_*[\wt V(L')]).
\]
In all cases, the denominator is bounded below by $Cg^3$, so we obtain the upper bound of $O(g^{12+\epsilon})$ on the geometric genus of $V(L)$.
\end{proof}

By Proposition~\ref{prop:bisec-proj-formula}, any bisection of genus $g$ and height $n$ satisfies $g\le 2n+2$.
The following lemma shows that, by varying the choice of zero section, the height can be made equal to $g$ or $g-1$. The proof uses some technical facts about the lattice $\E8(-1)$.

\begin{lem}\label{lem:min-height-realizable}
  For any bisection line bundle $L=\O(B)$ of genus $g$, there exists a section $s$ such that $s\cdot B=g-\delta_g$, where $\delta_g\in\set{0,1}$ only depends on $g$. In fact, $\delta_g=1$ if and only if $g \equiv 0 \mod 4$, and $L$ is Weierstrass.
\end{lem}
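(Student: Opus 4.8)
The plan is to translate the statement entirely into the language of the lattice $\E8(-1) \cong \ZF^\perp(R)$, using Proposition~\ref{prop:bisec-proj-formula}. Fix a bisection $B$ of genus $g$, and set $w = \Pi(B) \in \ZF^\perp(R)$. By Proposition~\ref{prop:bisec-proj-formula}, if $B$ has height $n$ then $w^2 = 2g - 4n - 4$, so $n = g - 1 - \tfrac14(w^2 - 2g + 2)\cdot(-1) \cdots$; more usefully, \emph{changing the zero section} replaces $Z$ by another section $Z'$, and the effect on the projection $\Pi$ (computed with respect to the new $\ZF$ basis) is by an element of $\Aut(\E8(-1))$ composed with a translation by the Mordell-Weil element corresponding to $Z'$. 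Since $\MW(R) \cong \E8(-1)$ acts on $\ZF^\perp(R)$ by the full group of lattice translations-by-vectors in the affine sense is not quite right; rather, the key point is: replacing $Z$ by $Z + s$ for a section $s$ (i.e.\ translating by $s \in \MW(R)$) sends the class $[B]$ to a new bisection class whose new height is governed by $(\text{old }w + 2\Pi(s))$ up to the norm. Concretely, one shows that the set of achievable values of $\Pi(B)^2$ over all choices of zero section is $\{\,(w + 2v)^2 : v \in \E8(-1)\,\}$ when $L$ is ordinary (so $w$ is 2-primitive) and $\{\,(w+2v)^2: v\} $ with $w \in 2\E8(-1)$ when $L$ is Weierstrass. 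This reduces everything to Lemma~\ref{lem:parities-classf}.

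First I would make the reduction precise: show that the translation action of $\MW(R)$ on bisection classes of genus $g$ corresponds, under $\Pi$, to the map $w \mapsto w + 2v$ for $v \in \E8(-1)$, so that the minimum achievable height $n$ corresponds to the \emph{maximum} of $(w+2v)^2$ over the coset $w + 2\E8(-1)$ — wait, sign: since $\E8(-1)$ is negative definite and $w^2 = 2g - 4n - 4$, larger $n$ means more negative $w^2$, so minimum height means \emph{least negative} $w^2$, i.e.\ we want to maximize $(w+2v)^2$ (closest to $0$) over the coset. Then I invoke Lemma~\ref{lem:parities-classf}: every nonzero coset in $\E8/2\E8$ contains a vector of norm $2$ (type ``odd'') or a vector of norm $4$ (type ``even''), and the zero coset contains $0$. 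Translating to $\E8(-1)$: the coset of $w$ contains a vector of norm $-2$, or of norm $-4$, or is the zero coset. A vector of norm $-2$ gives $2g - 4n - 4 = -2$, i.e.\ $n = \tfrac{g-1}{2}$, which is an integer only when $g$ is odd; norm $-4$ gives $n = \tfrac g2$, an integer when $g$ is even; the zero coset (forced when $w \in 2\E8(-1)$, i.e.\ $L$ Weierstrass) gives the projection $0$ is not achievable for a genuine genus-$g$ bisection unless $g$... — here I must be careful and instead use Lemma~\ref{lem:g-g-bisections-biject} and Lemma~\ref{lem:g-g-1-bisections-biject}, which already tell us exactly that $\mathcal B_{g,g}$ is nonempty iff $-2(g+2)$ is a norm in $\E8(-1)$ (always true, as it's the $q^{g+2}$ coefficient of $\eis_4$, which is positive), and $\mathcal B^\Wei_{g,g-1}$ is nonempty iff there is a $2$-divisible vector of norm $-2g$.

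The cleanest route, then: combine Lemma~\ref{lem:g-g-bisections-biject}, Lemma~\ref{lem:g-g-1-bisections-biject}, and Lemma~\ref{lem:parities-classf}. The bisection class $[B]$ of genus $g$, after translating by Mordell-Weil elements (equivalently, changing $Z$), can be brought to any class in $\mathcal B_g$ with the same image in $\ZF^\perp(R)/2\ZF^\perp(R)$, and the height is determined by the norm of $\Pi(B)$. By Lemma~\ref{lem:parities-classf}, if $\Pi(B) \notin 2\E8(-1)$ its coset contains a vector of norm $-2$ (when that coset is ``odd'') realizing height $\tfrac{g-1}2$ — but this requires $g$ odd for integrality, so when $g$ is even and $L$ ordinary the coset must be ``even'', containing a norm $-4$ vector, realizing height $\tfrac g2 = g - 1 - (\tfrac g2 - 1)$... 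I need $s\cdot B = g - \delta_g$, and Proposition~\ref{prop:bisec-proj-formula} applied to $s$ being the new zero section is a red herring — rather $s\cdot B$ IS the new height. So norm $-4$ of $\Pi(B)$ means $2g - 4n - 4 = -4$, $n = \tfrac g2$; but we want $n \in \{g, g-1\}$?? No — re-examining: the target is $s \cdot B = g - \delta_g \in \{g-1, g\}$, and $s\cdot B = \text{height} = n$, so we want $n = g$ or $n = g-1$, giving $\Pi(B)^2 = 2g - 4g - 4 = -2g-4$ or $-2g$. So we actually want the height \emph{large} (near $g$), i.e.\ $\Pi(B)^2$ \emph{most negative} in its coset-orbit. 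Then Lemma~\ref{lem:g-g-bisections-biject} says norm $-2(g+2) = -2g-4$ is always achievable (height $g$, $\delta_g = 0$), \emph{unless} $\Pi(B)$ lies in a coset containing no vector of that norm — but since we can translate freely within $w + 2\E8(-1)$ and also can we reach the zero coset? Only if $w \in 2\E8(-1)$, i.e.\ $L$ Weierstrass. So when $L$ is Weierstrass, $\Pi(B) \in 2\E8(-1)$, every vector in this sublattice has norm divisible by $8$ (since $\E8$ is even, $(2v)^2 = 4v^2 \in 8\Z$), so $-2g-4$ is achievable iff $8 \mid 2g+4$ iff $g \equiv 2 \bmod 4$; when $g \equiv 0 \bmod 4$, the best is norm $-2g$ (achievable by Lemma~\ref{lem:g-g-1-bisections-biject}, needing $8 \mid 2g$, true), giving height $g-1$, $\delta_g = 1$. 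This matches the claim exactly.

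\textbf{The main obstacle} will be step one: making rigorous the claim that changing the zero section $Z$ induces on bisection classes of fixed genus $g$ precisely the action of $\MW(R)$ translating $\Pi(B)$ within its coset mod $2\ZF^\perp(R)$ — in particular that the genus and the relevant divisibility ($L$ Weierstrass or not) are preserved, which is essentially condition (iii) in the definition of Weierstrass being coset-invariant (indeed $2$-divisibility of $\Pi(L)$ depends only on $\Pi(L) \bmod 2\ZF^\perp$ — no wait, $2$-divisibility is \emph{exactly} lying in $2\E8(-1)$, i.e.\ being in the zero coset). Once that translation picture is in place, the number-theoretic input is entirely supplied by Lemmas~\ref{lem:parities-classf}, \ref{lem:g-g-bisections-biject}, and \ref{lem:g-g-1-bisections-biject}, and the parity bookkeeping above finishes the proof.
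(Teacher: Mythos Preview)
Your reduction is the same as the paper's: once you derive (from Propositions~\ref{prop:sec-proj-formula}~and~\ref{prop:bisec-proj-formula}) the identity
\[
  s\cdot B = -\tfrac14\bigl(\Pi(B)-2\Pi(s)\bigr)^2 + \tfrac{g}{2}-1,
\]
the problem becomes: does the coset $\Pi(B)+2\E8(-1)$ contain a vector of norm $-2(g+2)$ (giving $\delta_g=0$) or, failing that, of norm $-2g$ (giving $\delta_g=1$)? Your treatment of the Weierstrass case is correct and complete: the coset is $2\E8(-1)$, all norms there lie in $8\Z$, and every such norm is realized.

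There is, however, a genuine gap in the ordinary case. You verify that the target norm $-2(g+2)$ has the correct residue mod $4$ for the coset (via Lemma~\ref{lem:parities-classf}), but you never show that this specific norm is actually \emph{achieved} in the coset. Lemma~\ref{lem:g-g-bisections-biject} only says that $\E8(-1)$ as a whole contains vectors of norm $-2(g+2)$; it says nothing about a prescribed nonzero coset mod $2\E8(-1)$. Your closing claim that ``the number-theoretic input is entirely supplied by Lemmas~\ref{lem:parities-classf}, \ref{lem:g-g-bisections-biject}, and \ref{lem:g-g-1-bisections-biject}'' is therefore not justified as stated. One \emph{can} close this gap by invoking Lemma~\ref{lem:trans-on-roots} to get transitivity of $\Aut(\E8)$ on the cosets of each parity type, whence all such cosets share the same theta series; but you would need to say this. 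The paper instead closes it constructively: it writes $\Pi(B)=u+2v$ with $u^2\in\{-2,-4\}$ (Lemma~\ref{lem:parities-classf}), normalizes $u$ to an explicit vector via Lemma~\ref{lem:trans-on-roots}, and then uses Lagrange's four-squares theorem to exhibit a suitable $w$ directly.

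Finally, you have misidentified the ``main obstacle''. The translation picture you worry about is immediate: since $\Pi\colon\MW(R)\to\ZF^\perp(R)$ is a bijection, the element $\Pi(B)-2\Pi(s)$ sweeps out exactly the coset $\Pi(B)+2\E8(-1)$ as $s$ ranges over all sections, and the displayed formula above is a two-line computation. The actual work is the existence statement you skipped.
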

\begin{proof}
  Using Propositions~\ref{prop:sec-proj-formula}~and~\ref{prop:bisec-proj-formula}, one computes that for any section $s$,
  \begin{equation}\label{eq:latticecomp}
    s\cdot B
    =-\frac14(\Pi(B)-2\Pi(s))^2+\frac{g}2-1.
  \end{equation}
  We now distinguish two cases, according to whether $\Pi(B)\in \ZF^\perp(R)\simeq \E8(-1)$ is 2-divisible.

  If $\Pi(B)=2v$, then let $s$ be the unique section with $\Pi(s)=v+w$, where $w\in\ZF^\perp(R)$ is to be determined.
  Then equation \eqref{eq:latticecomp} becomes
  \[
  s\cdot B=-w^2+\frac{g}{2}-1.
  \]
  It follows that $\frac{g}{2}$ is an integer.
  If $\frac{g}{2}$ is odd, resp. even, we can choose $w$ so that $-w^2=\frac{g}{2}+1$ (hence $\delta_g=0$), resp. $-w^2=\frac{g}{2}$ (hence $\delta_g=1$).

  Now suppose that $\Pi(B)$ is not 2-divisible.
  By Lemma~\ref{lem:parities-classf}, we can choose $u\in\ZF^\perp(R)$ with $u^2\in\set{-2,-4}$ such that $\Pi(B)-u=2v$ for some $v\in\ZF^\perp(R)$.
  Let $s$ be the unique section with $\Pi(s)=v-w$, where $w\in \ZF^\perp(R)$ is to be determined. Then equation \eqref{eq:latticecomp} becomes
  \[
  s\cdot B
  =-\frac{1}{4}(u+2w)^2+\frac{g}{2}-1
  =-\frac{1}{4}u^2-w^2-u\cdot w+\frac{g}{2}-1.
  \]
  We now consider the cases of $g$ being odd or even.
  Note that since $s\cdot B$ is an integer, these correspond precisely to $u^2=-2$ and $u^2=-4$.
  In either case, we will find $w$ such that $s\cdot B=g$.
  Using Lemma~\ref{lem:trans-on-roots}, we can assume that
  \[
    u\in\set{(1^2,0^6),(1^4,0^4)}
  \]
  (superscripts denote repeated entries).
  Now taking $w$ of the form $(0^4,a,b,c,d)$ or $(1,0^3,a,b,c,d)$, and using the fact that every positive integer is a sum of four squares, it is easy to see that we can arrange for $w^2$ to be as required.
\end{proof}

\section{Multiplicity Conjecture}\label{sec:multiplicity-conjecture}

The proof of of the genus bound in Theorem \ref{thm:genusbound-with-proof} uses only the total degree of the images $f_L: \wt{V}(L) \to \Sym^2(\P^1)\simeq \P^2$ for various bisection line bundles $L$. In this section, we give a conjectural formula for the local intersection multiplicities of the image of each $f_L$ with the diagonal conic $\Delta\subset \Sym^2(\P^1)$. The formula is in terms of the limiting curve cycle on $R$ parametrized by the corresponding point in the Severi curve $V(L)$. As evidence, we will show that the conjectural formula implies (and refines) the formula for the quasi-modular generating function $\varphi(q)$ of Section \ref{subsec:families-of-k3s}. It also gives an improved bound on the geometric genus of $V(L)$.

\subsection{Telltales}
Recall that the map $f_L: \wt{V}(L) \to \Sym^2(\P^1)$ tracks the two branch points of the varying rational bisection curve $B\subset R \to \P^1$. The image of $f_L$ intersects the diagonal conic $\Delta\subset \Sym^2(\P^1)$ when branch points collide. This results in a limit cycle $B_0\in |L|$ with worse singularities than the general bisection $B$. We will refer to these special cycles as telltales, and they fall into two types, simple and non-simple.
\begin{defn}
    A simple telltale is a cycle of the form $s_1+s_2$, where the $s_i$ are distinct section curves of $R\to \P^1$.
\end{defn}
\begin{remark}
The reason that $s_1\neq s_2$ in the definition of simple telltale is the fact that $L = \O(2s)$ has a unique section, so the Severi curve is empty.
\end{remark}
\begin{defn}
    A non-simple telltale is a cycle of the form $s_1+s_2+mN$, where $m$ is a positive integer, and $N$ is a nodal fiber of $\pi:R \to \P^1$, and the $s_i$ are possibly equal sections.
\end{defn}
\begin{remark}
Non-simple telltales with $s_1=s_2$ occur only when $L$ is Weierstrass. Note that in that case, telltales only appear for $m\geq 2$ because for $m\leq 1$ the Severi curve for $L=\O(2s+mN)$ is empty.
\end{remark}
Since the generic element $B\in \wt{V}(L)$ is irreducible rational, the limit cycle $B_0$ cannot have any components of positive genus. This is why the fiber component $N$ of the non-simple telltale is required to be rational (and hence nodal, since $R$ is general).
\begin{prop}\label{prop:simple-multiplicity}
Let $B_0\in V(L)$ be a simple telltale cycle $s_1+s_2$, with $s_1\cdot s_2 = g+1$, and suppose that the intersection is transverse. Then $V(L)$ has $g+1$ smooth branches \'{e}tale locally around the point corresponding to $B_0$.
\end{prop}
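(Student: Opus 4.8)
The plan is to describe the analytic germ of $V(L)$ at $[B_0]$ entirely in terms of equisingular deformations of the reducible curve $B_0=s_1+s_2$. Since $s_1\cap s_2$ is transverse, $B_0$ is a curve of arithmetic genus $g$ with exactly $g+1$ nodes $p_1,\dots,p_{g+1}$ and no worse singularities (each $s_i$ being smooth rational). Because $L=\O(B_0)$, the sequence $0\to\O_R\to L\to L_{B_0}\to 0$ together with $\Hm^1(\O_R)=0$ gives $h^0(L_{B_0})=g+1$ and identifies $T_{[B_0]}|L|\cong\Hm^0(B_0,L_{B_0})$. For a subset $P\subseteq\{p_1,\dots,p_{g+1}\}$ let $\Sigma_P\subseteq|L|$ denote the equisingular locus of deformations of $B_0$ in $|L|$ preserving a node at each point of $P$; by Proposition~\ref{prop:jacobianideal} (applied to the sublocus $P$ of the singular set) its Zariski tangent space at $[B_0]$ is $\Hm^0(B_0,L_{B_0}\otimes\cI_P)$, where $\cI_P=\prod_{p\in P}\mathfrak m_p$ is the product of the node Jacobian ideals, and the obstruction to smoothness lies in $\Hm^1(B_0,L_{B_0}\otimes\cI_P)$.

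First I would establish $\Hm^1(B_0,L_{B_0}\otimes\cI_P)=0$ for every $P$. For $P=\{p_1,\dots,p_{g+1}\}$ this is Lemma~\ref{lem:vanishingtheorem} applied to the degenerate ``rational bisection'' $B_0$ (all of whose singularities are nodes, i.e.\ locally $y^2=x^2$), and for general $P$ it follows \emph{a fortiori}; alternatively one checks it directly by restricting $L_{B_0}\otimes\cI_P$ to the two rational components $s_1,s_2$ and using that $L$ is positive on each. Granting this, a length count gives $h^0(L_{B_0}\otimes\cI_P)=(g+1)-\#P$. Hence, on the one hand, the fully equisingular locus $\Sigma_{\{p_1,\dots,p_{g+1}\}}$ is the reduced point $\{[B_0]\}$; on the other hand, for each $i$ the locus $\Sigma_i\defeq\Sigma_{\{p_j:j\ne i\}}$ is smooth at $[B_0]$ of dimension $1$, i.e.\ a smooth curve-germ through $[B_0]$, and its generic member is an \emph{exactly} $g$-nodal curve (otherwise $\Sigma_i\subseteq\Sigma_{\{p_1,\dots,p_{g+1}\}}$, which is $0$-dimensional).

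Next I would show $\bigcup_{i=1}^{g+1}\Sigma_i$ is the germ of $V(L)$ at $[B_0]$. A generic $C\in\Sigma_i$ is a bisection with a node near $p_j$ for each $j\ne i$, hence of geometric genus $\le g-g=0$; if such a $C$ were reducible its two components would carry the (discrete, hence locally constant) classes $[s_1],[s_2]\in\NS(R)$, which are $(-1)$-curve classes and therefore rigid (see \S\ref{sec:elliptic-surfaces}), forcing $C=B_0$. Thus a deleted neighbourhood of $[B_0]$ in $\Sigma_i$ consists of irreducible rational bisections, so $\Sigma_i\subseteq V(L)$ near $[B_0]$; and the $\Sigma_i$ are pairwise distinct branches, being distinguished by which node $p_i$ fails to be a limit of nodes of the nearby curves. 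Conversely, any branch of $V(L)$ at $[B_0]$ is a one-parameter family $B_t\to B_0$ whose generic member is an irreducible, $g$-nodal, rational bisection; by upper semicontinuity of the $\delta$-invariant the $g$ nodes of $B_t$ specialize \emph{injectively} into the $g+1$ nodes of $B_0$ (two colliding nodes would give $\delta\ge2$ at some $p_j$), missing exactly one index $i$, so that $B_t\in\Sigma_i$ and the branch is $\Sigma_i$. Therefore the germ of $V(L)$ at $[B_0]$ is $\bigcup_{i=1}^{g+1}\Sigma_i$, a union of $g+1$ smooth branches, as claimed.

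The step I expect to require the most care is the tangent-space/obstruction bookkeeping for the \emph{reducible}, multi-nodal curve $B_0$: one must check that Lemma~\ref{lem:vanishingtheorem} (whose proof uses the realization of $(R,L)$ as a double cover of a Hirzebruch surface and an auxiliary curve $\Gamma$ singular along all of $\mathrm{Sing}(B)$) does apply with $B=B_0$, or else supply a self-contained vanishing argument on $B_0=s_1\cup s_2$, and that Proposition~\ref{prop:jacobianideal} (equivalently \cite{harris-diaz-ideals}) indeed computes $T_{[B_0]}\Sigma_P=\Hm^0(B_0,L_{B_0}\otimes\cI_P)$ when the nodes $p_j$ all lie on the crossing locus of the two components. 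Everything else is formal. One expects, moreover, that the $g+1$ branches are pairwise transverse, so that $[B_0]$ is an ordinary $(g+1)$-fold point of $V(L)$, but this refinement is not needed for the statement.
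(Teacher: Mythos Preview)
Your proof is correct and follows the same approach as the paper, which also invokes Lemma~\ref{lem:vanishingtheorem} to show that the equisingular deformation space preserving any $g$ of the $g+1$ nodes is $1$-dimensional, and asserts that the local branches of $V(L)$ correspond to the choice of which node to smooth. You have simply fleshed out the details the paper leaves implicit---in particular the converse direction (that every branch of $V(L)$ at $[B_0]$ is some $\Sigma_i$, via specialization of nodes and rigidity of sections) and the caveat that Lemma~\ref{lem:vanishingtheorem} must be applied to the \emph{reducible} nodal curve $B_0$, which the paper also does without comment.
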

\begin{proof}
Each local component of $V(L)$ corresponds to choosing a node in $s_1+s_2$ to be smoothed. By Lemma \ref{lem:vanishingtheorem}, the first order deformation space of the rational bisection $s_1+s_2$ preserving the $g$ remaining nodes is 1-dimensional.
\end{proof}
We expect the following conjecture to be true, and while probably not essential, it simplifies the exposition in this section considerably, beginning with Proposition \ref{prop:simple-multiplicity}. A special case has been proved in \cite{ulmer-urzua}.
\begin{conj}
    If $R \to \P^1$ is a general rational elliptic surface, then any two sections meet transversely.
\end{conj}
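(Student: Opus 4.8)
The plan is to reduce the conjecture to a statement about a single section and then prove that statement by a genericity argument over the moduli of rational elliptic surfaces. First I would invoke the group law: since $R$ is relatively minimal and has a section, translation by a section is a biregular automorphism $\tau\colon R\to R$, and translation by $-t$ carries the pair $(s,t)$ to $(s-t,\,Z)$ while preserving all local intersection data. Hence the assertion that any two sections meet transversely is equivalent to the assertion that every nonzero section $u\in\MW(R)$ meets the zero section $Z$ transversely. By Proposition~\ref{prop:sec-proj-formula}, a section $u$ with $\Pi(u)^2=-2m-2$ has height $m=u\cdot Z$; so if $\Pi(u)$ is a root then $u$ is disjoint from $Z$ and there is nothing to check, and if $m=1$ the single intersection point is automatically transverse. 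It therefore suffices to show that, for a very general $R$, every section $u$ of height $m\ge 2$ meets $Z$ in $m$ \emph{distinct} (hence reduced) points.

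Next I would set this up in families. Work over the $8$-dimensional moduli space $\mathcal M$ of rational elliptic surfaces with irreducible fibers, with its universal family and the local system of fiberwise lattices $\ZF^\perp\cong\E8(-1)$. The set of norm-$2k$ vectors in $\E8$ is finite, and by classical results on the monodromy of rational elliptic surfaces the monodromy $\pi_1(\mathcal M)\to\Aut(\E8)$ acts through the Weyl group $W(\E8)$, so there are only finitely many orbits of section classes of each height and only countably many in all. For a class $v$, the locus $Z_v\subset\mathcal M$ of surfaces on which the section with projection $v$ (defined after an étale base change trivializing the relevant part of the local system) fails to meet $Z$ transversely is closed, being the vanishing of the discriminant of the finite scheme $u\cap Z$, and it depends only on the $W(\E8)$-orbit of $v$. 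Consequently, once each $Z_v$ is shown to be a proper subvariety of $\mathcal M$, its complement over all $v$ is the (very) general locus asserted by the conjecture; and by openness of reducedness it is enough to exhibit, for each of the finitely many orbits in each height, a single rational elliptic surface on which the section of class $v$ meets $Z$ transversely.

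To produce such examples I would use the plane model $R=\Bl_9\P^2$: sections of the anticanonical fibration are the $(-1)$-curves meeting each cubic of the pencil exactly once, a plentiful supply being given by strict transforms of lines, conics, and higher plane curves through prescribed subsets of the nine base points, together with their translates under the group law. When the nine points are in general position these plane curves meet each other and the cubics of the pencil transversely, and the scheme $u\cap Z$ (say for $Z=E_9$) can be read off from the incidence of the corresponding plane curves --- reducing transversality with $Z$ to the general position of the nine points. Matching the finitely many Weyl orbits in each height against finitely many such configurations would complete the argument; alternatively one hopes to dispense with the case analysis by extending the method of \cite{ulmer-urzua}, which treats a special family, via the group-law reduction above and an induction on height. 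The main obstacle I anticipate is precisely this step: making the construction cover \emph{every} Weyl orbit uniformly, and in particular ruling out the a priori possibility that for some class the scheme $u\cap Z$ is forced to be non-reduced. A dimension count does not suffice, since for $m>8=\dim\mathcal M$ the map $(R,u)\mapsto u\cap Z\in\Sym^m(\P^1)$ cannot be dominant, so one genuinely needs explicit transverse models rather than a generic-smoothness argument.
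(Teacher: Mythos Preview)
The statement you are attempting to prove is labeled a \emph{Conjecture} in the paper and is not proved there; the authors explicitly write that they ``expect the following conjecture to be true'' and that only a special case has been established in \cite{ulmer-urzua}. There is therefore no proof in the paper to compare against.

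As for your strategy: the reduction via translation is sound, and the set-up over the moduli space is reasonable. But your proposal is, by your own admission, not a proof but an outline with a genuine gap at the crucial step. You correctly observe that there are only finitely many $W(\E8)$-orbits of section classes \emph{in each height}, but there are infinitely many heights, hence infinitely many orbits in total. Your plan to ``exhibit, for each of the finitely many orbits in each height, a single rational elliptic surface on which the section of class $v$ meets $Z$ transversely'' therefore requires infinitely many verifications, and you give no uniform mechanism for carrying them out. The suggested alternative --- an induction on height extending \cite{ulmer-urzua} --- is not fleshed out, and you yourself flag that a dimension count fails for $m>8$. In short, your proposal identifies a plausible line of attack and its main obstruction, but does not overcome that obstruction; the conjecture remains open, as the paper states.
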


Because the sections $s_i$ are infinitesimally rigid on $R$, the $g+1$ points of $\wt{V}(L)$ lying over a simple telltale meet the diagonal $\Delta\subset \Sym^2(\P^1)$ transversely via the map $f_L$. Hence, each simple telltale will contribute $g+1$ to the total degree
$$f_{L*}[\wt{V}(L)]\cdot [\Delta]\in \mathbb N$$
We now formulate our conjecture for the intersection multiplicity from non-simple telltales.
\begin{conj}\label{conj:multconj}
The intersection multiplicity of $f_L$ with the diagonal conic $\Delta$ at a non-simple telltale $s_1+s_2+mN$ is given by
\begin{align*}
    &2\sigma_1(m) &s_1\neq s_2; \\
    &\sigma_1(m) &s_1=s_2,\, m\neq \mathrm{square};\\
    &\sigma_1(m)-1 &s_1=s_2,\, m= \mathrm{square}.
\end{align*}
\end{conj}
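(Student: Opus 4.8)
We sketch a strategy for Conjecture~\ref{conj:multconj}; the statement is local near the nodal fiber $N$ over $p_0$, and the plan is to describe the branches of the normalized Severi curve $\wt V(L)$ at the point lying over the telltale $B_0=s_1+s_2+mN$ and to compute $\ord_\epsilon f_L^*[\Delta]$ along each. As in the proof of Proposition~\ref{prop:simple-multiplicity} it is convenient to work through the double cover $\psi_L\colon R\to\F_n$ of \S\ref{subsec:moduli_of_rl}: since $H^0(R,L)=\psi_L^*H^0(\F_n,M)$ (a short computation with $\psi_{L*}\O_R$), every curve in $|L|$ has the form $\psi_L^{-1}(C)$ for a section $C$ of $\F_n$ in the class $M$, and $\wt V(L)$ is the normalization of the locus of those $C$ meeting the branch curve $D=\mathrm{Br}(\psi_L)$ with even contact order everywhere except at two simple points, with $f_L$ recording those two residual points in the base $\P^1\cong C$. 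In this picture $B_0$ corresponds to the reducible limit $C_0=C_0'+mF_{p_0}$, where $F_{p_0}$ is the fiber of $\F_n$ tangent to $D$ (its tangency point pulls back to the node of $N$) and $C_0'\in|M-mh|$ has even contact with $D$ away from $F_{p_0}$; when $s_1=s_2$ one has instead $C_0'\subset D$, which is precisely the Weierstrass case. Since every point of $F_{p_0}\cap D$ maps to $p_0$, along each branch of $\wt V(L)$ through $C_0$ the two residual points collide at $p_0$, and the quantity sought is $\sum_{\text{branches}}\ord_\epsilon f_L^*[\Delta]$.

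The geometric heart is the identification of these branches. On the $R$-side, a nearby irreducible rational bisection $B_\epsilon$ looks like two sheets, limiting to $s_1$ and $s_2$, joined by a ``neck'' carrying the two branch points of $B_\epsilon\to\P^1$; the limiting cycle $s_1+s_2+mN$ forces the neck to degenerate, as $\epsilon\to0$, onto a stable map of degree $m$ to the nodal rational curve $N$ whose domain is a chain of rational components attached at its two ends to the sheets over $s_1$ and $s_2$. Up to equisingular deformation --- controlled, as in the simple case, by Lemma~\ref{lem:vanishingtheorem} --- I expect the branches of $\wt V(L)$ at $C_0$ to be organized by the divisors $d\mid m$, with $\ord_\epsilon f_L^*[\Delta]=d$ on the branch indexed by $d$, governed by the degree over $N$ of the relevant component of the chain; the total $\sum_{d\mid m}d=\sigma_1(m)$ is then the same combinatorics that produces divisor sums for degree-$m$ covers of a nodal rational curve in relative Gromov--Witten theory. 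Making this dictionary rigorous is the main obstacle: the contact locus on $\F_n$ is a priori non-reduced at $C_0$ and its actual branches must be determined; the effect of the normalization $\wt V(L)\to V(L)$ at so degenerate a point must be controlled; and the multiplicity on each branch needs a Puiseux expansion of the two residual branch points in a fractional power of the deformation parameter. A versal-family argument in the spirit of Remark~\ref{rmk:rl-versal-family}, degenerating $R$ (or just $N$) so that the chain components become visible, seems the most promising route.

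It remains to account for the trichotomy. When $s_1\neq s_2$ the two ends of the chain lie on distinct sheets, so each $d$-branch occurs in two copies, distinguished by which end is attached to the $s_1$-sheet, giving $\sum_{d\mid m}2d=2\sigma_1(m)$. When $s_1=s_2$ (the Weierstrass case, where the chain closes onto a single section) the reversal symmetry identifies these copies in pairs, halving the count to $\sum_{d\mid m}d=\sigma_1(m)$; the exception is $m=k^2$, when the $d=\sqrt m$ branch is the reversal-fixed one and the limiting bisection along it acquires extra divisibility, consistently with the remark that a telltale $2s+mN$ with $m$ a square borders the forbidden locus $V(\O(2s))$. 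Isolating the precise defect of this single branch is delicate, and the cleanest way to pin it down is to impose compatibility with the rigorously known quasi-modular form $\varphi(q)$ of Proposition~\ref{prop:nl-series-formula}. As a first check, $m=1$ gives $\sigma_1(1)-1=0$, matching the emptiness of $V(\O(2s+mN))$ for $m\le1$; and, granting the conjecture, summing the contributions of all non-simple telltales with the $g+1$ from each simple telltale recovers the Noether--Lefschetz piece $\psi_\sec(q)$ of \S\ref{subsec:geom-interp}, hence $\varphi(q)$ --- the principal supporting evidence for the conjecture.
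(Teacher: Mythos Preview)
The statement in question is a \emph{conjecture} in the paper, not a theorem: the paper offers no proof. Its sole justification there is the consistency check in \S\ref{sec:multiplicity-conjecture}, namely that the proposed multiplicities are ``the simplest expression that depends only upon $m$, and which recovers $\varphi(q)$'' via the decomposition \eqref{eq:full-phiq2-formula}. Your proposal is therefore not competing with a proof in the paper; it is attempting something the authors do not.

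Your sketch is a reasonable geometric heuristic --- passing to the Hirzebruch model, interpreting branches at the telltale via degree-$m$ stable maps to the nodal fiber $N$, and invoking the divisor-sum combinatorics familiar from relative Gromov--Witten theory --- and you are candid about the genuine obstacles (non-reducedness of the contact locus at $C_0$, control of the normalization at a highly degenerate point, extracting the order of vanishing from a Puiseux expansion). These are exactly the issues that keep the statement conjectural. Two specific points to flag: first, your indexing of branches by divisors $d\mid m$ with $\ord_\epsilon f_L^*[\Delta]=d$ is asserted but not derived; the stable-map picture suggests a bijection between branches and certain combinatorial data, but turning that into a multiplicity formula requires a local computation you have not supplied. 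Second, your handling of the $s_1=s_2$, $m$ square case appeals to ``compatibility with $\varphi(q)$'' to pin down the $-1$ correction --- which is precisely the paper's own heuristic, not an independent argument. So as a proof this remains incomplete, but as motivation it goes further than the paper does.
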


\subsection{An improved genus bound}
Assuming Conjecture~\ref{conj:multconj}, we obtain an improved upper bound for the geometric genus of $V(L)$.
It allows us to compute the individual degrees $\deg({f_L}_*[\wt V(L)])$; this is in contrast to the proof of Theorem~\ref{thm:genusbound-with-proof}, in which we are only able to compute the sum of these degrees over all bisection line bundles of a fixed height.

\begin{prop}\label{prop:conjectural-degree-formula}
  If Conjecture~\ref{conj:multconj} holds, then for a general pair $(R,L)$, $2\deg({f_L}_*[\wt V(L)])$ is the coefficient of $q^{g(L)+2}$ in the series
  \begin{equation}\label{eq:conjectural-degree-formula}
    D\eis_{4,\infty} - \eis_{4,\infty} \eis_2(q^2)
    \quad\text{ or }\quad
    \frac12\Big(
    D\eis_4(q^4)
    -\eis_4(q^4)\eis_2(q^2)
    -12\big(\theta(q^2)-1\big)
    \Big)
  \end{equation}
  according to whether $L$ is ordinary or Weierstrass.
\end{prop}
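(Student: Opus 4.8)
The plan is to combine the topological decomposition of $\varphi(q)$ from Proposition~\ref{prop:degrees-are-modular} with the conjectural intersection multiplicities along $\Delta$ from Conjecture~\ref{conj:multconj}, together with the numerical classification of bisection line bundles from \S\ref{sec:elliptic-surfaces}. The key identity we will exploit is that for a general pair $(R,L)$, the class ${f_L}_*[\wt V(L)]$ is a curve in $\P^2\cong \Sym^2(\P^1)$, and its degree is recovered by intersecting with a general line; but it is easier to intersect with the special conic $\Delta$, since ${f_L}_*[\wt V(L)]\cdot[\Delta] = 2\deg({f_L}_*[\wt V(L)])$ (as $\Delta$ is a conic), and the left side can be computed \emph{set-theoretically point by point}, by enumerating the telltale cycles of $|L|$ and summing their local intersection multiplicities.

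First I would enumerate the telltales for a fixed $L$ of genus $g$. The simple telltales $s_1+s_2$ correspond, via $\Pi$, to pairs of sections whose classes sum (up to adding the fiber class) to $\Pi(L)$; by Proposition~\ref{prop:sec-proj-formula} and Lemma~\ref{lem:g-bisections-inject}, these are in bijection with lattice-theoretic decompositions of $\Pi(L) \in \E8(-1)$, and each contributes $g+1$ (Proposition~\ref{prop:simple-multiplicity} and the rigidity remark following it). The non-simple telltales $s_1+s_2+mN$ are indexed by a choice of nodal fiber $N$ (there are $12$), a positive integer $m$, and a pair of sections $s_1,s_2$ with $\O(s_1+s_2+mN)\cong L$; the constraint is again lattice-theoretic after projecting to $\E8(-1)$, and each contributes $2\sigma_1(m)$, $\sigma_1(m)$, or $\sigma_1(m)-1$ according to the trichotomy in Conjecture~\ref{conj:multconj}. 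I would then package the count of such configurations, weighted by these multiplicities, into a $q$-series in $q^{g+2}$, summing over all $L$ of a given parity/Weierstrass type using Lemmas~\ref{lem:g-g-bisections-biject} and~\ref{lem:g-g-1-bisections-biject} and their generalization (Corollary~\ref{cor:g-n-bisections-biject}); the generating functions for lattice vectors of prescribed norm in $\E8$ are governed by $\eis_4$, and the divisor-sum weight $\sigma_1(m)$ is exactly what converts $\eis_4(q)$ into $D\eis_4$ or $\eis_4\cdot\eis_2$ type combinations via the logarithmic-derivative identities for theta series.

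The cleanest route is to match the answer against the already-known quasi-modular form. By Proposition~\ref{prop:degrees-are-modular} we have
\[
\varphi(q) = -1 + \sum_{n\ge 0}\Big(4\sum_L \deg({f_L}_*[\wt V(L)])\Big)q^{n+2} + \psi_\ex(q)+\psi_\no(q),
\]
with $\psi_\ex(q) = 1-\eis_4(q^2)$ and $\psi_\no(q) = \eis_4(q^2)\cdot 24\sum_{k\ge1}q^{k^2}$, and $\varphi(q) = -\tfrac13\eis_2\eis_4^2 - \tfrac23\eis_4\eis_6$. Since the sum over $L$ of genus $g$ of a fixed type has $\#\mathcal B_{g,g}$ (or $\#\mathcal B^\ord_{g,g}$, $\#\mathcal B^\Wei_{g,g}$, $\#\mathcal B^\Wei_{g,g-1}$) equal terms by Corollary~\ref{cor:severi-invariance}, I can solve for the individual degree $\deg({f_L}_*[\wt V(L)])$ by dividing the appropriate Fourier coefficient of $\varphi - \psi_\ex - \psi_\no$ by the relevant count; using Ramanujan's identity \eqref{eq:ramanujan} to rewrite $\eis_2\eis_4^2$ in terms of $D\eis_4$ and known forms, and the transformation $F(q)\mapsto F(q^m)$ for the level structure, this should collapse precisely to the coefficient extraction from $D\eis_{4,\infty} - \eis_{4,\infty}\eis_2(q^2)$ in the ordinary case and the stated $\Gamma_0(4)$ expression in the Weierstrass case. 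The $\theta(q^2)-1$ correction term in the Weierstrass formula comes from the $m=\mathrm{square}$ adjustment in Conjecture~\ref{conj:multconj} (the $\sigma_1(m)-1$ case), which must be reconciled with the $\psi_\no$ term supported on square exponents; verifying that these bookkeeping terms cancel or combine correctly is where care is needed.

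The main obstacle will be separating the contributions cleanly: both $\psi_\no$ and the $s_1=s_2$ non-simple telltales are supported at exponents involving squares (from $\sum q^{k^2}$), and the $\eis_4(q^2)$ vs.\ $\eis_4(q^4)$ distinction tracks ordinary vs.\ Weierstrass, so one must be scrupulous about which lattice vectors in $\E8(-1)$ are $2$-divisible (Weierstrass) versus not, and about the factor-of-two Galois-conjugation bookkeeping from Proposition~\ref{prop:galois} that already appears in $\varphi$. In particular, confirming that the non-simple telltale multiplicity $2\sigma_1(m)$ (resp.\ $\sigma_1(m)$) is exactly absorbed into $D\eis_4$-type terms after dividing by $\#\mathcal B_{g,n}$ — and that the $-1$ in the square case matches the $\theta(q^2)-1$ summand — is the delicate part; once the generating functions are lined up, the equality reduces to an identity among weight-$\le 10$ (quasi-)modular forms for $\SL_2(\Z)$ and $\Gamma_0(4)$, which can be checked on finitely many Fourier coefficients.
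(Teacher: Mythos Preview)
Your first two paragraphs sketch essentially the paper's approach: compute ${f_L}_*[\wt V(L)]\cdot[\Delta]=2\deg({f_L}_*[\wt V(L)])$ for a \emph{single} $L$ by enumerating telltales in $\abs{L}$ and summing the multiplicities from Proposition~\ref{prop:simple-multiplicity} and Conjecture~\ref{conj:multconj}. The key step you leave implicit is the precise telltale count: the paper proves (Proposition~\ref{prop:telltale-counting-series}) that the number of simple telltales in $\abs{L}$ is the coefficient of $q^{g+2}$ in $\eis_{4,\infty}$ or $\tfrac12\eis_4(q^4)+\tfrac12$ according to type, and that the non-simple telltales of fiber multiplicity $m$ number $12$ times the corresponding coefficient of $q^{g+2-2m}$. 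Once you have this, the formula drops out: writing $\eis_{4,\infty}=\sum a_kq^k$, the intersection with $\Delta$ is $(g+1)a_{g+2}+\sum_m 12a_{g+2-2m}\cdot 2\sigma_1(m)$, which is the $q^{g+2}$ coefficient of $D\eis_{4,\infty}-\eis_{4,\infty}\eis_2(q^2)$; in the Weierstrass case the twelve $s_1=s_2$ telltales produce the $\theta(q^2)-1$ correction.

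Your ``cleanest route'' via $\varphi(q)$, however, has a genuine gap. In Proposition~\ref{prop:degrees-are-modular} the inner sum is over bisection line bundles of fixed \emph{height} $n$, not fixed genus; for each $n$ this includes bisections of every genus $g\le 2n+2$ and of both ordinary and Weierstrass type (Corollary~\ref{cor:g-n-bisections-biject}). So the coefficient of $q^{n+2}$ in $\varphi-\psi_\ex-\psi_\no$ is a convolution $\sum_g\big(\#\mathcal{B}^{\ord}_{g,n}d^{\ord}_g+\#\mathcal{B}^{\Wei}_{g,n}d^{\Wei}_g\big)$, and you cannot recover the individual $d^{\ord}_g$ or $d^{\Wei}_g$ by ``dividing by the relevant count''. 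More tellingly, that route never invokes Conjecture~\ref{conj:multconj}: if it worked, the proposition would be unconditional, which it is not. The paper in fact runs this comparison in the opposite direction (\S\ref{sec:multiplicity-conjecture}.3), verifying that the formulas of Proposition~\ref{prop:conjectural-degree-formula}, once summed over all $L$ of each height, reproduce $\varphi(q^2)$ --- a consistency check motivating Conjecture~\ref{conj:multconj}, not a proof of the proposition.
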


Using the quadratic genus-degree formula as in the proof of Theorem~\ref{thm:genusbound}, we then obtain:
\begin{cor}
  If Conjecture~\ref{conj:multconj} holds, then the geometric genus of a Severi curve $V(L)$ is $O(g^{10+\epsilon})$, where $g = \frac{1}{2}L^2$.
\end{cor}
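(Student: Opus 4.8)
The plan is to deduce this corollary from Proposition~\ref{prop:conjectural-degree-formula} by the same genus--degree reasoning used in the proof of Theorem~\ref{thm:genusbound-with-proof}; the only new ingredient is a weight count for the generating series appearing in \eqref{eq:conjectural-degree-formula}.

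First I would record that each of the two series in \eqref{eq:conjectural-degree-formula} is quasi-modular of weight $6$, up to a negligible correction. Indeed $\eis_{4,\infty}\in\Mod(\Gamma_0(2),4)$ and $\eis_4(q^4)\in\Mod(\Gamma_0(4),4)$, so $D\eis_{4,\infty}$ and $D\eis_4(q^4)$ have weight $6$ because $D$ raises weight by $2$; and $\eis_2(q^2)$ is quasi-modular of weight $2$, so the products $\eis_{4,\infty}\eis_2(q^2)$ and $\eis_4(q^4)\eis_2(q^2)$ again have weight $6$. The remaining term $12(\theta(q^2)-1)=24\sum_{k\ge1}q^{2k^2}$ has coefficients in $\{0,24\}$, hence contributes $O(1)$. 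Applying the trivial bound $a_n=O(n^{k-1+\varepsilon})$ for the coefficients of a weight $k$ quasi-modular form, recalled at the end of \S\ref{subsec:qmod-forms}, with $k=6$, we find that the $q^{g+2}$-coefficient of either series is $O(g^{5+\varepsilon})$. By Proposition~\ref{prop:conjectural-degree-formula} this yields $\deg({f_L}_*[\wt V(L)])=O(g^{5+\varepsilon})$ for a general pair $(R,L)$, and hence, by Corollary~\ref{cor:severi-invariance}, for every bisection line bundle $L$ on a fixed very general $R$.

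Next I would invoke Lemma~\ref{lem:f_l_is_birational}: the branch morphism $f_L\colon\wt V(L)\to\Sym^2(\P^1)\cong\P^2$ is birational onto its image, so $V(L)$ is birational to the plane curve $f_L(\wt V(L))$, whose degree is $d\defeq\deg({f_L}_*[\wt V(L)])=O(g^{5+\varepsilon})$. The geometric genus of $V(L)$ is therefore bounded above by the arithmetic genus $\binom{d-1}{2}=O(d^2)=O(g^{10+\varepsilon})$ of a plane curve of degree $d$, which is the assertion.

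Granted Conjecture~\ref{conj:multconj} (hence Proposition~\ref{prop:conjectural-degree-formula}), this deduction is routine bookkeeping; the one point deserving care is the weight count, in particular checking that the $D$-derivative does not push the weight past $6$ and that the half-integral-weight $\theta(q^2)$ correction is genuinely lower order. The real obstacle lies upstream, in proving Proposition~\ref{prop:conjectural-degree-formula} from the conjectural local multiplicities at non-simple telltales: the improvement from exponent $12$ in Theorem~\ref{thm:genusbound-with-proof} to $10$ here reflects exactly the passage from the ambient weight-$10$ Noether--Lefschetz series to the weight-$6$ series that controls a single degree $\deg({f_L}_*[\wt V(L)])$ directly, with no need to average over all bisection line bundles of a fixed height.
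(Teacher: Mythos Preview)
Your argument is correct and is exactly the approach the paper takes: the paper's proof is a one-line reference to ``the quadratic genus--degree formula as in the proof of Theorem~\ref{thm:genusbound-with-proof}'', and you have simply made explicit the implicit weight-$6$ count for the series in \eqref{eq:conjectural-degree-formula} and the invocation of Lemma~\ref{lem:f_l_is_birational}. Nothing to add.
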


\begin{proof}[Proof of Proposition~\ref{prop:conjectural-degree-formula}]
  Conjecture~\ref{conj:multconj} gives the intersection multiplicities of $f_L$ with $\Delta$ for a telltale of any given fiber multiplicity.
  Hence, to compute $f_L\cdot[\Delta]$, it remains to determine the number of telltales of each fiber multiplicity, which we do below in Proposition~\ref{prop:telltale-counting-series}.

  Writing $E_{4,\infty}=\sum_{k=0}^\infty a_kq^k$ and recalling that $\eis_2(q)=1-24\sum_{k=1}^\infty\sigma_1(k)q^k$, equation \eqref{eq:conjectural-degree-formula} says that, for $L$ ordinary of genus $g$, the intersection number $f_L\cdot[\Delta]$ should be given by
  \[
    \begin{split}
      &(g+2)a_{g+2}-\Big(a_{g+2}-24\sum_{m=1}^{\floor{\frac{g+2}2}}a_{g+2-2m}\sigma_1(m)\Big)\\
      =&(g+1)a_{g+2}+\sum_{m=1}^{\floor{\frac{g+2}2}}(12a_{g+2-2m})\cdot\big(2\sigma_1(m)\big).
    \end{split}
  \]
  The first of these terms is the contribution to $f_L\cdot[\Delta]$ coming from the simple telltales according to Propositions~\ref{prop:simple-multiplicity}~and~\ref{prop:telltale-counting-series}, and the second term is the contribution coming from the non-simple telltales according to Conjecture~\ref{conj:multconj} and Proposition~\ref{prop:telltale-counting-series}.

  The case of when $L$ is Weierstrass is similar, where we use the second power series formula from Proposition~\ref{prop:telltale-counting-series}, except that we need to make a correction according to Conjecture~\ref{conj:multconj} for telltales of the form $B=2s+mN$.
  A given Weierstrass line bundle will contain exactly 12 such telltales in its complete linear system, since $\Pi(B)$ is divisible by 2 and $s$ is the corresponding section, and $m=(g+2)/2$ by the adjunction formula.

  Thus, instead of contributing $2\sigma_1(\frac{g+2}{2})$ to the total intersection number, these 12 telltales each contribute $\big(\sigma_1(\frac{g+2}2)-1\big)$ or $\sigma_1(\frac{g+2}2)$ according to whether or not $(g+2)/2$ is a square.
  This accounts for the term $\frac12\eis_2(q^2)
  -12\cdot\frac12(\theta(q^2)-1)$.
\end{proof}

\begin{prop}\label{prop:telltale-counting-series}
  Let $L=\O(B)$ be a bisection line bundle of genus $g$. Then the number of simple telltales in the complete linear system $\abs{L}$ is the coefficient of $q^{g+2}$ in
  \[
    \eis_{4,\infty}(q)
    \quad\text{ or }\quad
    \frac12 \eis_4(q^4)+\frac12,
  \]
  according to whether $L$ is ordinary or Weierstrass, and for any $m>0$, the number of non-simple telltales in $\abs{L}$ with fiber-multiplicity $m$ is the coefficient of $q^{g+2-2m}$ in
  \[
    12\eis_{4,\infty}(q)
    \quad\text{ or }\quad
    12\left(\frac12 \eis_4(q^4)+\frac12\right),
  \]
  according to whether $L$ is ordinary or Weierstrass.
\end{prop}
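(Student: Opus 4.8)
The plan is to reduce the count to a lattice computation in $\ZF^\perp(R)\cong\E8(-1)$ via the orthogonal projection $\Pi$, and then to evaluate that lattice count using the theta series of the cosets of $\E8/2\E8$ together with the transitivity statements of Lemmas~\ref{lem:trans-on-roots}~and~\ref{lem:parities-classf}. The first step is to show that the simple telltales $s_1+s_2\in\abs{L}$ are in bijection with the unordered pairs $\{u,-u\}$ where $u\in\ZF^\perp(R)$ satisfies $u\equiv\Pi(L)\pmod{2\ZF^\perp(R)}$, $u^2=-2(g+2)$ and $u\neq 0$; and that the non-simple telltales of a given fiber multiplicity $m$ are counted by $12$ times the analogous quantity, but with $g$ replaced by $g-2m$ and with $u=0$ now permitted. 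Here the factor $12$ records the $12$ singular (hence, for general $R$, irreducible nodal) fibers available as the component $N$.

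To set up this bijection I would use that $\MW(R)\cong\ZF^\perp(R)$, so a section $s$ is determined by $a=\Pi(s)$, that a section of height $h$ has $a^2=-2-2h$ (Proposition~\ref{prop:sec-proj-formula}), and that a genus-$g$, height-$n$ bisection has projection of norm $2g-4n-4$ (Proposition~\ref{prop:bisec-proj-formula}). Writing the $\ZF(R)$-component in the orthogonal basis $Z,Z+F$, one checks that for sections $s_1,s_2$ with $a=\Pi(s_1)$, $b=\Pi(s_2)$ the condition $\O(s_1+s_2)=L$ is equivalent to $a+b=\Pi(L)$ together with $\mathrm{ht}(s_1)+\mathrm{ht}(s_2)=\mathrm{ht}(L)$. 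Setting $u=a-b$, the parallelogram identity rewrites $a^2+b^2$ in terms of $(a+b)^2=\Pi(L)^2$ and $u^2$, and comparison with the height formula shows that the height condition is \emph{automatic} as soon as $u^2=-2(g+2)$; conversely each $u\equiv\Pi(L)\pmod 2$ of that norm comes from a unique such pair, which is degenerate exactly when $u=0$. For a non-simple telltale one rewrites $s_1+s_2+mN\in\abs L$ as $\O(s_1+s_2)=L-mF$ (using $[N]=F$), which is the same problem with $g$ replaced by $g-2m$, where now $s_1=s_2$ is allowed and $N$ ranges over the $12$ nodal fibers.

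For the lattice count, put $\Theta_c(q)=\sum_{u\in c}q^{-u^2/2}$ for a coset $c\in\ZF^\perp(R)/2\ZF^\perp(R)$. Then $\Theta_0$ is the theta series of $2\ZF^\perp(R)$, so $\Theta_0(q)=\eis_4(q^4)$, and summing over all cosets gives $\sum_c\Theta_c=\eis_4(q)$. By Lemma~\ref{lem:parities-classf} each nonzero coset contains a vector of norm $-2$ or of norm $-4$, but not both; by Lemma~\ref{lem:trans-on-roots}, $\Aut(\E8)$ acts transitively on each kind, and since it permutes the cosets while preserving $\Theta_c$, there is a single theta series $\Theta_2$ for the $120$ cosets of the first kind (one per pair $\pm r$ of roots) and a single $\Theta_4$ for the remaining $135$. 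As $\Theta_2$ is supported in odd powers and $\Theta_4$ in even powers, the identity $\eis_4(q)=\eis_4(q^4)+120\,\Theta_2+135\,\Theta_4$ pins down both, and a brief divisor-sum computation (using multiplicativity of $\sigma_3$ and $\sigma_3(2^a)=(8^{a+1}-1)/7$) shows $\tfrac12\Theta_2$ agrees with $\eis_{4,\infty}$ in every odd degree and $\tfrac12\Theta_4$ agrees with $\eis_{4,\infty}$ in every even degree. Finally $\Pi(L)^2\equiv 2g\pmod 4$, so when $L$ is ordinary the coset of $\Pi(L)$ is of the first kind for $g$ odd and of the second for $g$ even, and when $L$ is Weierstrass it is the zero coset. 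Dividing by $2$ for the $\{u,-u\}$ symmetry then produces the coefficient of $q^{g+2}$ in $\eis_{4,\infty}(q)$ (ordinary) resp.\ in $\tfrac12\eis_4(q^4)+\tfrac12$ (Weierstrass), and $12$ times the coefficient of $q^{g+2-2m}$ in the same series in the non-simple case; the extra $\tfrac12$ in the Weierstrass series is exactly the $q^0$-correction from the degenerate case $u=0$, i.e.\ the $12$ telltales $2s+\tfrac{g+2}{2}N$ present in every Weierstrass linear system.

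The main obstacle is the bookkeeping of the second paragraph: getting the N\'eron--Severi and height conditions to collapse into the single norm equation $u^2=-2(g+2)$, and handling the $u=0$ degeneracy and its constant terms without off-by-one errors (this is what forces $\tfrac12\eis_4(q^4)+\tfrac12$ rather than $\tfrac12\eis_4(q^4)$). The theta-series input is essentially formal once one knows there are only three coset series; the single genuinely computational point is the identity $\tfrac89\big(\sigma_3(j)-\sigma_3(j/4)\big)=\sigma_3(j)-\sigma_3(j/2)$ for even $j$ (with $\sigma_3$ of a non-integer read as $0$), which follows from the $2$-adic formula above. One should also note that the resulting counts depend only on $g$ and on the ordinary/Weierstrass dichotomy, consistent with Corollary~\ref{cor:severi-invariance}.
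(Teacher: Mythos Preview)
Your argument is correct and follows essentially the same route as the paper's, with two small but pleasant simplifications worth noting.

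First, by parametrizing the pair $(s_1,s_2)$ via $u=\Pi(s_1)-\Pi(s_2)$ instead of by $(u_1,u_2)=(\Pi(s_1),\Pi(s_2))$, you arrive directly at the coset condition $u\equiv\Pi(L)\pmod{2\ZF^\perp(R)}$, $u^2=-2(g+2)$. The paper instead keeps the pair $(u_1,u_2)$ with the constraint $4u_1\cdot u_2=w^2+2(g+2)$, and in the ordinary case reaches the same coset description only after a Thales-theorem reinterpretation (lattice points on a sphere of radius $\sqrt{-w^2}/2$ centered at $w/2$, rescaled to points congruent to $w$ mod $2\E8$). Your change of variables short-circuits that step.

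Second, for the identification of the coset theta series with $\eis_{4,\infty}$, the paper argues that the counting series is modular of weight~4 for $\Gamma_0(4)$ and then matches it with $\eis_{4,\infty}$ by checking finitely many coefficients. You instead solve $E_4(q)=E_4(q^4)+120\,\Theta_2+135\,\Theta_4$ by parity and verify $\tfrac12\Theta_2=(\eis_{4,\infty})_{\mathrm{odd}}$, $\tfrac12\Theta_4=(\eis_{4,\infty})_{\mathrm{even}}$ via the elementary identity $\tfrac89\big(\sigma_3(j)-\sigma_3(j/4)\big)=\sigma_3(j)-\sigma_3(j/2)$ for even~$j$. This is the same result reached by a more self-contained (if slightly more computational) path, and it makes the dependence on only the \emph{coset type} of $\Pi(L)$ visible from the outset. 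Your handling of the constant term and the degenerate case $u=0$ (which produces the $12$ telltales $2s+\tfrac{g+2}{2}N$ in the Weierstrass case) matches the paper's.
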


\begin{proof}
  The statement about non-simple telltales follows from the one about simple telltales, since there is a 12-to-1 correspondence $s_1+s_2+mN\mapsto s_1+s_2$ between non-simple telltales with fiber-multiplicity $m$ in $\abs{L}$ and simple telltales in $\abs{L\otimes \O(-mF)}$, and by adjunction the latter line bundle has genus $g-2m$.

  For the simple telltales, we are counting the number of (unordered) pairs of sections $\set{s_1,s_2}$ in $\MW(R)$ with $s_1+s_2\in\abs{L}$.
  Using the orthogonal projection $\Pi$ from \S\ref{subsec:elliptic-surfaces}, any such pair $\set{s_1,s_2}$ gives a pair $\set{u_1,u_2}=\set{\Pi s_1,\Pi s_2}$ in $\ZF^\perp(R)\cong \E8(-1)$ with $u_1+u_2=\Pi(B)$.
  Moreover, one can check that, conversely, given $\set{u_1,u_2}$ with $u_1+u_2=\Pi(B)$, the sections $\set{s_1,s_2}=\set{\Pi\I u_1,\Pi\I u_2}$ will satisfy $s_1+s_2\in\abs{B}$ if and only if $4u_1\cdot u_2=w^2+2(g+2)$, where $w=\Pi(B)$.

  Hence, we are reduced to counting such pairs $\set{u_1,u_2}$ for fixed $w$.
  Moreover, $L$ is Weierstrass or ordinary according to whether or not $w\in 2\E8(-1)$; we say that $w$ is \emph{2-div} in the former case, and otherwise that it is \emph{2-prim}. One checks, by establishing an explicit bijection, that the number of such pairs $\set{u_1,u_2}$ is independent of the particular choice of $w$, depending only on whether $w$ is 2-prim or 2-div.

  Now the case of Weierstrass $B$ follows readily: since we can choose any 2-div $w$ we like, we might as well take $w=0$, so we are then tasked with finding the number of unordered pairs $\set{u_1,u_2}=\set{u_1,-u_1}$ with $-4u_1^2=2(g+2)$. This is equal to the $q^{(g+2)/4}$-coefficient of the theta function for $\E8$, which is equal to $\eis_4$.
  The $1/2$ factor comes from the fact that we are counting unordered pairs, and we adjust the constant term to 1 to handle the case where $B$ is a doubled section.

  The case of ordinary $B$ is more involved.
  Let $\wt\eis_{4,\infty}(q)$ be the series whose $q^{g+2}$ coefficient is the number of pairs $\set{u_1,u_2}$ with $u_1+u_2=w$ and $4u_1\cdot u_2=w^2+2(g+2)$ for any fixed 2-prim $w$ (with $w^2 \equiv 2g\mod 4$).
  We first show that $\wt\eis_{4,\infty}(q)$ is modular, whereupon it is proved to be equal to $\eis_{4,\infty}(q)$ by checking finitely many terms. It is given by the following modular form of weight 4 for $\Gamma_0(4)$:
  \begin{equation}\label{eq:e4infexpr}
  2\wt\eis_{4,\infty}(q) =
  \frac{1}{120}\left(\eis_4(q)-\eis_4(q^4)\right)_{\mathrm{odd}}+
  \frac{1}{135}\left(\eis_4(q)-\eis_4(q^4)\right)_{\mathrm{even}}
  \end{equation}
  where $f(q)_{\mathrm{even}}$ and $f(q)_{\mathrm{odd}}$ denote the power series obtained from $f(q)$ by eliminating all the odd and even terms, respectively.
  We are using the fact that $\Mod(\Gamma_0(4),k)$ is closed under $f(q)\mapsto f(q)_{\mathrm{even}}$ and $f(q)\mapsto f(q)_\mathrm{odd}$.

  The $q^{g+2}$ coefficient on the right-hand side of \eqref{eq:e4infexpr} is equal to the number of 2-prim vectors in $\E8(-1)$ of norm $-2(g+2)$, divided by 120 or 135, respectively, depending on whether $g+2$ is even or odd.
  To compute the $q^{g+2}$ coefficient of $\wt\eis_{4,\infty}$, we again use our freedom in choosing $w$, arranging $w^2=-2(g+2)$, so that we are counting pairs $\set{u_1,u_2}$ with $u_1+u_2=w$ and $u_1\cdot u_2=0$.
  By Thales' theorem, this is the same as the number of lattice points on the sphere of radius $\sqrt{-w^2}/2$ around $w/2$. This, in turn, is the number of 2-div lattice points on the sphere of radius $\sqrt{-w^2}$ around $w$, which is the same as the number of lattice points of norm $w^2=-2(g+2)$ which are equal to $w$ in $\E8/2\E8$. Note that any such lattice point is 2-prim.

  We are now tasked with counting the number of vectors of norm $-2(g+2)$ which are equal to $w$ mod $2\E8$.
  By Lemmas~\ref{lem:trans-on-roots}~and~\ref{lem:parities-classf}, this number is independent of $w$, and hence is equal to the total number of 2-prim vectors of norm $-2(g+2)$ divided by the number of equivalence classes modulo $2\E8$ of vectors of norm $-(g+2)$. Hence, it remains to see that the number of these equivalence classes is 120 or 135, according to whether $-2(g+2)$ is odd or even. But by Lemma~\ref{lem:parities-classf} again, this is the same as the number of equivalence classes modulo $2\E8$ of vectors of norm 2 or 4, which can be computed directly (with a computer).
\end{proof}

\begin{cor}\label{cor:decomps-exist}
  For any $u\in \E8$ and any $m\in\Z_{\ge0}$ with $u^2\equiv 2m\mod 4$, there exists a decomposition $u=v+w$ with $4v\cdot w=u^2-2m$.
\end{cor}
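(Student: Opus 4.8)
The plan is to repackage the assertion as a statement about a single vector of $\E8$ and then read it off from the explicit lattice facts behind Lemmas~\ref{lem:trans-on-roots} and~\ref{lem:parities-classf}. First I would observe that a decomposition $u=v+w$ with $v,w\in\E8$ is the same datum as a choice of $y:=v-w$: since $2v=u+y$, the vectors $v=\tfrac12(u+y)$ and $w=\tfrac12(u-y)$ lie in $\E8$ precisely when $y\in\E8$ and $y\equiv u\pmod{2\E8}$, in which case $4\,v\cdot w=(u+y)\cdot(u-y)=u^2-y^2$. Hence the required decomposition exists if and only if the coset $u+2\E8\subset\E8$ contains a vector of norm $2m$.

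Next I would exhibit such a $y$. By Lemma~\ref{lem:parities-classf} every vector of $u+2\E8$ already has norm $\equiv u^2\equiv 2m\pmod 4$, and the coset contains a representative $y_0$ with $y_0^2\in\{0,2,4\}$; thus $k:=(2m-y_0^2)/4$ is a non-negative integer, and it suffices to find $z\in\E8$ with $(y_0+2z)^2=2m$, i.e.\ with $z^2+y_0\cdot z=k$. One clean route is to invoke Lemmas~\ref{lem:trans-on-roots} and~\ref{lem:parities-classf} to reduce to a single model coset — they imply that any two nonzero classes of $\E8/2\E8$ with the same norm-residue mod $4$ are $\Aut(\E8)$-conjugate — and then count the vectors of norm $2m$ in that model class using the theta series $\eis_4$, exactly as in the proof of Proposition~\ref{prop:telltale-counting-series}. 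Alternatively, one can argue constructively: after using the transitivity of $\Aut(\E8)$ on roots to place $y_0$ conveniently inside the model of \S\ref{subsubsec:e8-lattice}, take $z$ to be a sum of $j$ pairwise-orthogonal roots each orthogonal to $y_0$, which gives $z^2+y_0\cdot z=2j$, and swap one of them for a root $e$ with $y_0\cdot e=-1$ to hit the odd values; a short case check settles the remaining small $k$.

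The only genuinely $\E8$-specific input is this last step — producing a vector of a prescribed norm in a prescribed coset — so I expect the (mild) bookkeeping there, especially the parity constraints forced by $y_0^2\in\{2,4\}$ and the smallest values of $k$, to be where whatever real work there is resides; the reduction in the first paragraph is purely formal.
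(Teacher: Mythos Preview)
Your first paragraph's reformulation — that decompositions $u=v+w$ with $4v\cdot w=u^2-2m$ correspond to vectors $y\equiv u\pmod{2\E8}$ with $y^2=2m$ — is correct, and your route (a) is precisely the paper's proof: the paper simply cites the proof of Proposition~\ref{prop:telltale-counting-series}, which computed the number of such decompositions as the relevant coefficient of $\eis_4$ (for $u$ 2-divisible) or $\eis_{4,\infty}$ (for $u$ 2-primitive), and then invokes positivity of those coefficients in the appropriate degrees.

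Your constructive alternative (b), however, does not work as written: a sum of $j$ pairwise-orthogonal roots in $\E8$ has $j\le 8$, so the recipe $z^2+y_0\cdot z=2j$ only reaches $k\le 16$, and the missing cases are the \emph{large} $k$, not the small ones you flag. A working fix is to take $z$ as an integer combination of four mutually orthogonal roots all orthogonal to $y_0$ (such an $A_1^{\oplus4}$ exists in $y_0^\perp$ since $y_0^2\le4$), so that $\tfrac12 z^2$ realizes every non-negative integer by Lagrange's four-square theorem, and then adjust by a single root meeting $y_0$ to handle odd $k$ when $y_0\ne0$.
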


\begin{proof}
    This is a corollary of the proof of Proposition~\ref{prop:telltale-counting-series}, where it was shown that the number of such decompositions is the coefficient of $q^m$ in  $\eis_4$ (resp. $\eis_{4,\infty}$) when $u$ is 2-div (resp. 2-prim). Both modular forms have positive coefficients in all non-negative (resp. positive) degrees.
\end{proof}

We obtain the following generalization of Lemma~\ref{lem:g-g-bisections-biject}.
Recall that we write $\mathcal{B}_{g,n}$ for the set of arithmetic genus $g$, height $n$ bisections.

\begin{cor}\label{cor:g-n-bisections-biject}
  The restriction $\rstr{\Pi}{\mathcal{B}_{g,n}}:\mathcal{B}_{g,n}\to\ZF^\perp(R)$ is a bijection onto the set of $v\in \ZF^\perp(R)$ of norm $2g-(4n+4)$.
  In particular, $\#\mathcal{B}_{g,n}$ is the coefficient of $q^{2n+2-g}$ in $\eis_4(q)$.
\end{cor}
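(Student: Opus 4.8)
The strategy is to mimic the proof of Lemma~\ref{lem:g-g-bisections-biject}, using Corollary~\ref{cor:decomps-exist} to handle the surjectivity for arbitrary heights $n$, rather than just $n = g$. Injectivity is immediate from Lemma~\ref{lem:g-bisections-inject}, which already says $\rstr{\Pi}{\mathcal{B}_g}$ is injective, so this restricts to an injection on $\mathcal{B}_{g,n}\subset\mathcal{B}_g$. By Proposition~\ref{prop:bisec-proj-formula}, any $B\in\mathcal{B}_{g,n}$ has $\Pi(B)^2 = 2g-(4n+4)$, so the image lands in the stated norm set; the content is surjectivity onto \emph{all} of it.

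For surjectivity, fix $v\in\ZF^\perp(R)\cong\E8(-1)$ with $v^2 = 2g-(4n+4)$; equivalently, writing $u = v$ in $\E8$ (sign-flipping conventions aside), $u$ has norm $4n+4-2g$. We seek a bisection $B$ of genus $g$ and height $n$ with $\Pi(B) = v$. The idea is to realize $B$ as $s_1 + s_2$ for two sections $s_1, s_2$ of $R$, whose projections $u_1 = \Pi(s_1)$, $u_2 = \Pi(s_2)$ satisfy $u_1 + u_2 = v$. By Proposition~\ref{prop:sec-proj-formula}, if $s_1, s_2$ have heights $m_1, m_2$ then $u_i^2 = -2 - 2m_i$ and $u_1\cdot u_2 = s_1\cdot s_2 - 1 - (m_1+m_2)$; also $s_1+s_2$ has height $m_1+m_2$ and genus $s_1\cdot s_2 - 1$ (arithmetic genus of a reducible nodal curve being the number of intersection points minus one, as used in Lemma~\ref{lem:g-g-bisections-biject}). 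Imposing genus $g$ forces $s_1\cdot s_2 = g+1$, and then height $= m_1+m_2 = n$. One then computes $4u_1\cdot u_2 = v^2 - 2(g+2)$ after substituting, exactly the relation in the form $4u_1\cdot u_2 = u^2 - 2m$ appearing in Corollary~\ref{cor:decomps-exist} with $m = g+2$ (one checks the congruence $v^2 \equiv 2(g+2) \bmod 4$, which holds since $v^2 = 2g - 4n - 4 \equiv 2g \equiv 2(g+2) \bmod 4$). Thus Corollary~\ref{cor:decomps-exist} provides a decomposition $v = u_1 + u_2$ with the required intersection number; lifting via $\Pi\I$ to sections $s_1, s_2$ and setting $B = s_1+s_2$ gives the desired bisection class, and $\Pi(B) = \Pi(s_1) + \Pi(s_2) = v$.

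Finally, the counting statement: $\#\mathcal{B}_{g,n}$ equals the number of $v\in\E8(-1)$ of norm $2g-(4n+4)$, i.e.\ the number of vectors in $\E8$ of norm $4n+4-2g = 2(2n+2-g)$, which is the coefficient of $q^{2n+2-g}$ in the theta series $\eis_4$ of $\E8$ (as already invoked in Lemma~\ref{lem:g-g-bisections-biject}). The main obstacle is the bookkeeping around whether the decomposition $v = u_1+u_2$ yields \emph{distinct} sections with \emph{transverse} intersection, so that $s_1+s_2$ genuinely has arithmetic genus $g$ and the claimed height; but since we only need the divisor class $[s_1+s_2]\in\NS(R)$ to have the right numerical invariants — and these are determined purely by the lattice data $u_1^2, u_2^2, u_1\cdot u_2$ via Propositions~\ref{prop:sec-proj-formula} and~\ref{prop:bisec-proj-formula} — the geometric subtleties do not actually intervene, and the argument reduces cleanly to Corollary~\ref{cor:decomps-exist}.
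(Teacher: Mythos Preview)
Your approach is essentially identical to the paper's: injectivity from Lemma~\ref{lem:g-bisections-inject}, image in the correct norm set by Proposition~\ref{prop:bisec-proj-formula}, and surjectivity by invoking Corollary~\ref{cor:decomps-exist} with $m=g+2$ to split the target vector as a sum $u_1+u_2$, lifting to sections, and taking $B=s_1+s_2$. One small sign slip: in $\ZF^\perp(R)\cong\E8(-1)$ the relation is $4u_1\cdot u_2 = v^2 + 2(g+2)$ (as in the proof of Proposition~\ref{prop:telltale-counting-series}), not $v^2 - 2(g+2)$; the minus sign you wrote is correct only after passing to the positive-definite $\E8$, so be careful to keep the two conventions straight.
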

\begin{proof}
  As in Lemma~\ref{lem:g-g-bisections-biject}, we only need to prove surjectivity.
  Given $u\in\ZF^\perp(R)\cong \E8(-1)$ of norm $2g-(4n+4)$, there exists by Corollary~\ref{cor:decomps-exist} a decomposition $u=v+w$ with $4v\cdot w=u^2+2(g+2)$. Hence, taking $s$ and $t$ to be the unique sections with $\Pi s=v$ and $\Pi t=w$ and setting $B=s+t$ gives $\Pi(B)=u$, and we have $B\in\mathcal{B}_{g,n}$ by Propositions~\ref{prop:sec-proj-formula}~and~\ref{prop:bisec-proj-formula}.
\end{proof}

\subsection{Interpretation of the main series in terms of telltales}
We now explain the motivation for Conjecture~\ref{conj:multconj}.
The starting point is a second formula for the $q$-series found in Proposition~\ref{prop:nl-series-formula}.
\[
\varphi(q) = -\frac{1}{3} E_2 E_4^2 - \frac{2}{3} E_4E_6
\]
Observe that $\varphi(q^2)$ has the same even order terms as the series
\[
2\left(\eis_4-\eis_4(q^4)\right)\left(D\eis_{4,\infty} - \eis_{4,\infty} \eis_2(q^2)\right) + \eis_4(q^4)\left(D\eis_4(q^4)-\eis_4(q^4)\eis_2(q^2)\right).
\]
This is a finite check because both sides are quasi-modular forms of weight 10 for $\Gamma_0(4)$.

We recognize parts of this expression from Proposition~\ref{prop:conjectural-degree-formula}.
To make this more explicit, let us rewrite it as:
\begin{equation}\label{eq:full-phiq2-formula}
\begin{split}
&2\big(
\eis_4-\eis_4(q^4)\big)
\big(
D\eis_{4,\infty} - \eis_{4,\infty} \eis_2(q^2)
\big) \\
+& \eis_4(q^4)\Big(
    D\eis_4(q^4)-\eis_4(q^4)\eis_2(q^2)-12\big(\theta(q^2)-1\big)
\Big)\\
+&(1-E_4(q^4))
+12E_4(q^4)\big(\theta(q^2)-1\big)-1.
\end{split}
\end{equation}
By Corollary~\ref{cor:g-n-bisections-biject}, the number of ordinary or Weierstrass bisection line bundles of height $n$ and genus $g$ is given by the $q^{(2n+4)-(g+2)}$ coefficient of the series $E_4-E_4(q^4)$ or $E_4(q^4)$, respectively.
Hence, using Proposition~\ref{prop:conjectural-degree-formula}, the first two lines of \eqref{eq:full-phiq2-formula} correspond to the degree-counting series in Proposition~\ref{prop:degrees-are-modular}, and the third line of \eqref{eq:full-phiq2-formula} are precisely the correction terms \eqref{eq:excess-contr} and \eqref{eq:nodal-contr}.

The multiplicity rule in Conjecture~\ref{conj:multconj} is the simplest expression that depends only upon $m$, and which recovers $\varphi(q)$ via \eqref{eq:full-phiq2-formula}.

\bibliographystyle{alpha}
\bibliography{\jobname}
\end{document}